\theoremstyle{plain}    
\newtheorem{thm}{Theorem}[section]
\newtheorem{cor}[thm]{Corollary} 
\newtheorem{lemma}[thm]{Lemma} 
\newtheorem{prop}[thm]{Proposition}
\newtheorem{case}{Case}[thm]
\newtheorem{thmsubs}{Theorem}[subsection]
\newtheorem{lemmasubs}[thmsubs]{Lemma} 
\newtheorem{propsubs}[thmsubs]{Proposition}
\theoremstyle{remark}
\newtheorem{remark}[thm]{Remark}
\theoremstyle{definition}
\newtheorem{defi}[thm]{Definition}
\newtheorem{example}[thm]{Example}
\def\timeHHMM{{\number\theHour:\number\theMinuteTens\number\theMinute}}
\def\today{{\number\day\space
 \ifcase\month\or
  January\or February\or March\or April\or May\or June\or
  July\or August\or September\or October\or November\or December\fi
 \space\number\year}}
\def\timeanddate{{\timeHHMM\space o'clock, \today}}
\newcommand\Ac{{\mathcal{A}}}
\newcommand\Afr{{\mathfrak A}}
\newcommand\Cpx{{\mathbf C}}
\newcommand\dom{\operatorname{dom}}
\newcommand\domproj{\operatorname{domproj}}
\newcommand\eps{\epsilon}
\newcommand\et{{\tilde e}}
\newcommand\ft{{\tilde f}}
\newcommand\gt{{\tilde g}}
\newcommand\Hom{\mathrm{Hom}}
\newcommand\HEu{{\EuScript H}}                   
\newcommand\It{{\widetilde I}}
\newcommand\itil{{\tilde i}}
\newcommand\jt{{\tilde j}}
\newcommand\Jt{{\widetilde J}}
\newcommand\kerproj{\operatorname{kerproj}}
\newcommand\kt{{\tilde k}}
\newcommand\Kt{{\widetilde K}}
\newcommand\lambdat{{\tilde\lambda}}
\newcommand\Mcal{{\mathcal{M}}} 
\newcommand\mut{{\tilde{\mu}}}
\newcommand\Nats{{\mathbf N}}
\newcommand\nut{{\tilde{\nu}}}
\newcommand\Proj{{\mathrm{Proj}}}
\newcommand\pt{{\tilde p}}
\newcommand\qt{{\tilde q}}
\newcommand\ran{\operatorname{ran}}
\newcommand\ranproj{\operatorname{ranproj}}
\newcommand\Reals{{\mathbf R}}
\newcommand\smd[2]{\underset{#2}{#1}}
\newcommand\smdp[3]{\overset{#3}{\smd{#1}{#2}}}
\newcommand\St{{\widetilde S}}
\newcommand\Tt{{\widetilde T}}
\newcommand\Ut{{\widetilde U}}
\begin{document}

\title[Horn inequalities]{On a reduction
procedure for Horn inequalities in finite von~Neumann algebras}

\author[Collins]{Beno\^\i{}t Collins$^{\dagger}$}
\address{B.\ Collins,
Department of Mathematics and Statistics, University of Ottawa,
585 King Edward,
Ottawa, ON
K1N 6N5 Canada, and
CNRS, Department of Mathematics, Lyon 1 Claude Bernard University} 
\email{bcollins@uottawa.ca}
\thanks{\footnotesize $^{\dagger}$Research supported in part by NSERC
grant RGPIN/341303-2007}

\author[Dykema]{Ken Dykema$^{*}$}
\address{K.\ Dykema, Department of Mathematics, Texas A\&M University,
College Station, TX 77843-3368, USA}
\email{kdykema@math.tamu.edu}
\thanks{\footnotesize $^{*}$Research supported in part by NSF grant DMS-0600814}



\date{\timeanddate}

\begin{abstract}
We consider the analogues of the Horn inequalities in finite von Neumann algebras,
which concern the possible spectral distributions of sums $a+b$ of self--adjoint elements
$a$ and $b$ in a finite von Neumann algebra.
It is an open question whether all of these Horn inequalities must hold in all finite von Neumann
algebras, and this is related to Connes' embedding problem.
For each choice of integers $1\le r\le n$, there is a set $T^n_r$ of Horn triples 
$(I,J,K)$ of $r$--tuples of integers, and the Horn inequalities are in one--to--one correspondence
with $\cup_{1\le r\le n}T^n_r$.
We consider a property P$_n$, analogous to one introduced by Therianos and Thompson in the case of matrices,
amounting to the existence of projections having certain properties relative to arbitrary flags,
which guarantees that a given Horn inequality holds in all finite von Neumann algebras.
It is an open question whether all Horn triples in $T^n_r$ have property P$_n$.
Certain triples in $T^n_r$ can be reduced to triples in $T^{n-1}_r$ by
an operation we call {\em TT--reduction}.
We show that property P$_n$ holds for the original triple if property P$_{n-1}$ holds
for the reduced one.
A major part of this paper is devoted to showing that this operation of reduction preserves the
value of the corresponding Littlewood--Richardson coefficients.
We then characterize the TT--irreducible Horn triples in $T^n_3$, for arbitrary $n$,
and for those LR--minimal ones
(namely, those having Littlewood--Richardson coefficient equal to $1$), we
perform a construction of projections with respect to flags in arbitrary von Neumann algebras
in order to prove property P$_n$ for them.
This shows that all LR--minimal triples in $\cup_{n\ge3}T^n_3$ have property P$_n$, and so that
the corresponding Horn inequalities hold in all finite von Neumann algebras.
\end{abstract}

\maketitle

\section{Introduction and description of results}

If $A$ and $B$ are Hermition $n\times n$ matrices whose eigenvalues
(repeated according to multiplicity)
are $\alpha_1\ge\alpha_2\ge\cdots\ge\alpha_n$
and $\beta_1\ge\beta_2\ge\cdots\beta_n$, respectively, what can the eigenvalues of $A+B$ be?
In~\cite{H62}, A.\ Horn described sets $T^n_r$ of triples $(I,J,K)$ of subsets of $\{1,\ldots,n\}$,
with $|I|=|J|=|K|=r$,
and conjectured that a weakly decreasing real sequence $\gamma_1\ge\gamma_2\ge\cdots\ge\gamma_n$
can arise as the eigenvalues of $A+B$, for some $A$ and $B$ as above, if and only if
\begin{equation}
\sum_{i=1}^n\alpha_i+\sum_{j=1}^n\beta_j=\sum_{k=1}^n\gamma_k
\end{equation}
and for each triple $(I,J,K)\in\bigcup_{r=1}^{n-1}T^n_r$, the so--called {\em Horn inequality}
\begin{equation}\label{eq:HIJK}
\sum_{i\in I}\alpha_i+\sum_{j\in J}\beta_j\ge\sum_{k\in K}\gamma_k
\end{equation}
holds.
(We recall Horn's definition of the sets $T^n_r$ in Section~\ref{sec:Horn}: see~\eqref{eq:Unr} and~\eqref{eq:Tnr}.)
Horn's conjecture has been proved, due to work of Klyatchko, Tataro, Knutson and Tao.
See the article~\cite{F00} of Fulton.

The purpose of this paper is to prove that analogues of some of the Horn inequalities hold in all finite
von Neumann algebras.
This question was first considered by Bercovici and Li in~\cite{BL01} (see also~\cite{BL06})
and the following exposition is essentially from their papers.
Let $\Mcal$ be a von Neumann algebra with a fixed normal, faithful, tracial state $\tau$.
If $a=a^*\in\Mcal$, the {\em eigenvalue function} of $a$ is the non--increasing, right--continuous function
$\lambda_a:[0,1)\to\Reals$ given by
\begin{equation}
\lambda_a(t)=\sup\{x\in\mathbb{R}\mid\mu_a((x,\infty))>t\},
\end{equation}
where $\mu_a$ is the distrubtion of $a$, which is the Borel measure supported on the spectrum of $a$
and satisfying
\begin{equation}
\tau(a^k)=\int_{\mathbb{R}}t^k\,d\mu_a(t)\qquad(k\ge1).
\end{equation}
For example, if 
\begin{equation}\label{eq:ainMn}
a=a^*\in M_n(\Cpx)\hookrightarrow\Mcal
\end{equation}
has eigenvalues $\alpha_1\ge\alpha_2\ge\cdots\ge\alpha_n$, then
\begin{equation}
\lambda_a(t)=\alpha_j\,,\quad \frac{j-1}n\le t<\frac jn\,,\quad j\in\{1,\ldots,n\}.
\end{equation}

\begin{defi}
Let $(I,J,K)\in T^n_r$ be a Horn triple.
We say that {\em the Horn inequality corresponding to $(I,J,K)$ holds} in $(\Mcal,\tau)$ if
\begin{equation}\label{eq:contsHorn}
\int_{\omega_I}\lambda_a(t)\,dt+\int_{\omega_J}\lambda_b(t)\,dt\ge\int_{\omega_K}\lambda_{a+b}(t)\,dt
\end{equation}
for all $a,b\in\Mcal_{s.a.}:=\{x\in\Mcal\mid x=x^*\}$, where
\begin{equation}
\omega_I=\bigcup_{i\in I}\bigg[\frac{i-1}n,\frac in\bigg)
\end{equation}
and similarly for $\omega_J$ and $\omega_K$.
\end{defi}

Note that~\eqref{eq:contsHorn}
becomes the usual Horn inequality~\eqref{eq:HIJK} when $a$ and $b$ lie in the same copy of the $n\times n$ matrices,
as in~\eqref{eq:ainMn}.

Bercovici and Li showed in~\cite{BL01} that the Horn inequalities corresponding to the Freede--Thompson inequalities
(and certain generalizations of them)
hold in all finite von Neumann algebras.
In~\cite{BL06}, they showed that if $(\Mcal,\tau)$ satisfies Connes' embedding property,
namely, if it embeds
in the ultraproduct $R^\omega$ of the hyperfinite II$_1$--factor, or
equivalently (assuming separable pre--dual), if all $n$--tuples 
of self--adjoints in $\Mcal$ can be approximated in mixed moments by matrices, then all Horn inequalities
hold in $(\Mcal,\tau)$.
Moreover, they showed that the set of possible triples $(\lambda_a,\lambda_b,\lambda_{a+b})$
for $a$ and $b$ self--adjoints
in $R^\omega$ is characterized by the inequalities of the form~\eqref{eq:contsHorn}.
It is an important open question, known as {\em Connes' embedding problem,}
whether all finite von Neumann algebras having separable pre--dual satisfy Connes' embedding property.
In the converse direction, in~\cite{CD} we showed that if certain versions of the Horn inequalities
with matrix coefficients hold in all finite von Neumann algebras, then Connes' embedding problem has a
positive answer.
Seen in this light, it is quite interesting to learn about which Horn inequalities must hold in all finite von Neumann
algebras.
Some speculative observations about possible constructions of counter--examples to embeddability
are found in Section~\ref{sec:constr}.

One method of proving that the Horn inequality corresponding to a given
Horn triple $(I,J,K)\in T^n_r$ holds in a finite von Neumann algebra $(\Mcal,\tau)$
is to construct projections in $\Mcal$ satisfying certain properties with respect to
flags of projections in $(\Mcal,\tau)$.
We say $(I,J,K)$ has {\em property P$_n$} if such projections can always be constructed,
and we introduce a weaker, approximate version of this property.
See the first part of Section~\ref{sec:Horn} for details,
but note that Definition~\ref{def:Pn} and Proposition~\ref{prop:flagHorn} are
for the symmetric reformulation of the Horn sets described there.
Bercovici and Li's proof~\cite{BL01} that certain Horn inequalities must hold
in all finite von Neumann algebras was, to rephrase it, made by showing that they have property P$_n$.
Following their proof we prove the slightly stronger statement that this is implied by property AP$_n$.

In~\cite{TT74}, Therianos and Thompson proved a reduction result,
showing that the analogue of property P$_n$ in $n\times n$ matrices for a given triple $(I,J,K)$ can sometimes be
deduced from the same analogue of property P$_{n-1}$ for a related triple $(\It,\Jt,\Kt)$.
(See also~\cite{Z66}.)
They then used this reduction result and some explicit constructions of projections in matrices
to show that Horn inequalities in $M_n(\Cpx)$ corresponding to triples in $T^n_3$ hold for all $n$.
We show (Lemma~\ref{lem:TT1}) that a similar reduction technique holds for properties
P$_n$ and AP$_n$ in finite von Neumann algebras.
Using this reduction result, though we were not able to prove that Horn inequalities in finite von Neumann
algebras hold for all triples in $\bigcup_{n\ge3}T^n_3$, we do show that they hold for all
the LR--minimal triples in this set.
The moniker LR--minimal refers to the Littlewood--Richardson coefficient of the triple (see
Definition~\ref{def:LRc} and Lemma~\ref{lem:cIJK}); it follows from Theorem~13 of~\cite{F00}
that the set of Horn inequalities coming from LR--minimal triples determines the remaining Horn inequalities,
both in the case of matrices and of finite von Neumann algebras.
As a byproduct of our reduction technique,
we also show that all the Horn inequalities corresponding to triples in
$\bigcup_{r\in\{1,2\},\,n\ge r}T^n_r$ hold in all finite von Neumann algebras,
though this is can be more easily proved directly.
As perhaps the most arduous part of our proof, we show
(Proposition~\ref{prop:redLR}) that the reduction method refered to above preserves the
Littlewood--Richardson coefficient.

Here is a brief description of the rest of this paper.
In Section~\ref{sec:prelims}, we cover some preliminary and (mostly) well known facts
about finite von Neumann algebras.
In Section~\ref{sec:Horn}, we first describe minor reformulation of Horn's triples;
the reformulated set is denoted $\Tt^n_r$, and is invariant under  the obvious action
of the group of permuatations of three letters.
Then we prove the analogue in finite von Neumann algebras of the reduction result
from~\cite{TT74}.
Triples that cannot be reduced are called irreducible, naturally enough.
After introducing new notation $c^{(n)}(I,J,K)$ for Littlewood--Richardson coefficient of $(I,J,K)\in\Tt^n_r$
and observing the
invariance of this quantity under permuting the arguments $I$, $J$ and $K$,
we prove that it is also invariant under the reduction method referred to above.
We then characterize the irreducible triples in $\Tt^n_3$,
compute their Littlewood--Richardson coefficients, and list the irreducible triples of
minimal Littlewood--Richardson coefficient in $\Tt^n_4$, for $n\le9$.
In Section~\ref{sec:Constr}, we exhibit a construction of projections in finite von Neumann algebras
that suffices to prove that the Horn inequalities for all LR--minimal triples in $\bigcup_{n\ge3}T^n_3$
hold in all finite von Neumann algebras.
Merely because we like the argument involving almost invariant subspaces, we prove that property AP$_6$
holds for a certain
element of $\Tt^6_3$ having Littlewood--Richardson coefficient equal to $2$.
Section~\ref{sec:constr}, which is independent of the rest of the paper and can safely be skipped,
contains some speculative remarks about how one might construct a non--embeddable finite von Neumann algebra.

\section{Preliminaries concerning finite von Neumann algebras}
\label{sec:prelims}

In the following three subsections, we review some facts, introduce
some notation and state some results that will be used later.
While (most of) these are certainly not original,
for convenience, we provide some proofs.

\subsection{Two projections}
\label{subsec:twoproj}

Let $\Mcal\subseteq B(\HEu)$ be a finite von Neumann algebra with a fixed faithful, tracial state $\tau$.
Let $\Proj(\Mcal)$ denote the set of self--adjoint idempotents in $\Mcal$, which are also called
{\em projections} in $\Mcal$.
Many elementary but useful facts about projections in $\Mcal$ follow from the standard description of 
the subalgebra generated by any two of them, which we now describe.
Let $p,q\in\Proj(\Mcal)$.
Recall that $p\wedge q$ denotes the projection onto the closed subspace $p\HEu\cap q\HEu$
and $p\vee q$ denotes the projection onto the closure of $p\HEu+q\HEu$.
Let $\Ac=W^*(\{p,q,1\})$ be unital von Neumann algebra generated by $p$ and $q$.
Let $\Afr$ denote the universal, unital C$^*$--algebra generated by two projections $P$ and $Q$.
As is well--known,
\begin{equation}
\Afr\cong\{f:[0,1]\to M_2(\Cpx)\mid f\text{ continuous, }f(0),\,f(1)\text{ diagonal}\},
\end{equation}
with
\begin{equation}
P=\begin{pmatrix}1&0\\0&0\end{pmatrix},\qquad
Q=\begin{pmatrix}t&\sqrt{t(1-t)}\\\sqrt{t(1-t)}&1-t\end{pmatrix}.
\end{equation}
We have a quotient map $\pi:\Afr\to\Ac$ sending $P$ to $p$ and $Q$ to $q$, and
$\Ac$ is isomorphic to the weak closure of the image of the Gelfand--Naimark--Segal
representation of $\Afr$ arising from the trace $\tau\circ\pi$ on $\Afr$.
We thereby identify $\Ac$ with
\begin{equation}\label{eq:vNpq}
\smdp{\Cpx}{\gamma_{11}}{p\wedge q}
  \oplus\smdp{\Cpx}{\gamma_{10}}{p\wedge(1-q)}
  \oplus L^\infty(\mu)\otimes M_2(\Cpx)
  \oplus\smdp{\Cpx}{\gamma_{01}}{(1-p)\wedge q}
  \oplus\smdp{\Cpx}{\gamma_{00}}{(1-p)\wedge(1-q)},
\end{equation}
where $\gamma_{ij}\ge0$, where $\mu$ is a measure concentrated on a subset of the open interval $(0,1)$,
and where the notation in~\eqref{eq:vNpq} means, for example, that $p\wedge q$
is the projection 
\begin{equation}
p\wedge q=1\oplus0\oplus\left(\begin{smallmatrix}0&0\\0&0\end{smallmatrix}\right)\oplus0\oplus0
\end{equation}
and $\tau(p\wedge q)=\gamma_{11}$.
We have
\begin{align}
p&=1\oplus 1\oplus\begin{pmatrix}1&0\\0&0\end{pmatrix}\oplus0\oplus0 \label{eq:p} \\
q&=1\oplus 0\oplus\begin{pmatrix}t&\sqrt{t(1-t)}\\\sqrt{t(1-t)}&1-t\end{pmatrix}\oplus1\oplus0
  \label{eq:q}
\end{align}
and, if
\begin{equation}
a=\lambda_{11}\oplus\lambda_{10}\oplus\begin{pmatrix}f_{11}&f_{12}\\f_{21}&f_{22}\end{pmatrix}
  \oplus\lambda_{01}\oplus\lambda_{00}\in\Ac
\end{equation}
for $\lambda_{ij}\in\Cpx$ and $f_{pq}\in L^\infty(\mu)$, then
\begin{equation}
\tau(a)=\lambda_{11}\gamma_{11}+\lambda_{10}\gamma_{10}
 +\frac12\int(f_{11}+f_{22})\,d\mu
 +\lambda_{01}\gamma_{01}+\lambda_{00}\gamma_{00}.
\end{equation}
Thus, the total mass of $\mu$ is
\begin{equation}
|\mu|=1-\gamma_{11}-\gamma_{10}-\gamma_{01}-\gamma_{00}.
\end{equation}
Of course, if in~\eqref{eq:vNpq} some $\gamma_{ij}$ or $\mu$ itself should be zero,
then the corresponding summand should be understood to be absent.
Inspecting this situation, we observe the following elementary result.
\begin{propsubs}\label{prop:taup}
We have
\begin{gather}
p\vee q=1-(1-p)\wedge(1-q), \\
\tau(p\vee q)=\tau(p)+\tau(q)-\tau(p\wedge q) \label{eq:tpvq} \\
\tau(p-(1-q)\wedge p)=\tau(q-(1-p)\wedge q). \label{eq:p1qp}
\end{gather}
\end{propsubs}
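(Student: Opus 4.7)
The plan is to reduce all three identities to direct computations in the explicit model \eqref{eq:vNpq} of $\Ac = W^*(\{p,q,1\})$, after first disposing of the De Morgan identity by a general argument.

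For the first identity, I would note that it is simply the standard De Morgan law for projections on a Hilbert space: a vector lies in the orthogonal complement of $\overline{p\HEu+q\HEu}$ if and only if it is orthogonal to both $p\HEu$ and $q\HEu$, i.e.\ if it lies in $(1-p)\HEu \cap (1-q)\HEu$. Thus $1-(p\vee q) = (1-p)\wedge(1-q)$, which is the claimed equality. This identity does not require finiteness.

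For the remaining two identities, I would exploit the explicit form of $p$, $q$ and the trace given in \eqref{eq:p}--\eqref{eq:q} and in the paragraph that follows them. Reading off the summands, $p\wedge q$, $p\wedge(1-q)$, $(1-p)\wedge q$ and $(1-p)\wedge(1-q)$ are, respectively, the minimal central projections of the first, second, fourth and fifth summands of \eqref{eq:vNpq}, so their traces are $\gamma_{11}$, $\gamma_{10}$, $\gamma_{01}$ and $\gamma_{00}$. From \eqref{eq:p} and the trace formula, $\tau(p)=\gamma_{11}+\gamma_{10}+\tfrac12|\mu|$, and similarly $\tau(q)=\gamma_{11}+\gamma_{01}+\tfrac12|\mu|$. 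Applying the first identity together with $|\mu|=1-\gamma_{11}-\gamma_{10}-\gamma_{01}-\gamma_{00}$ gives
\begin{equation*}
\tau(p\vee q)=1-\gamma_{00}=\gamma_{11}+\gamma_{10}+\gamma_{01}+|\mu|
=\tau(p)+\tau(q)-\tau(p\wedge q),
\end{equation*}
which is \eqref{eq:tpvq}. For \eqref{eq:p1qp}, both sides equal $\tau(p)-\tau(q)+\bigl(\tau((1-p)\wedge q)-\tau(p\wedge(1-q))\bigr)$ after rearrangement, and substituting the values above collapses this to the tautology $\gamma_{10}-\gamma_{01}=\gamma_{10}-\gamma_{01}$.

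There is no real obstacle here; everything is a bookkeeping exercise once the canonical model \eqref{eq:vNpq} is in hand. The only small point to be careful about is the convention that summands corresponding to vanishing $\gamma_{ij}$ or to $\mu=0$ are simply absent, but since the arithmetic is linear in the $\gamma_{ij}$ and in $|\mu|$, the formulas remain valid in every degenerate case.
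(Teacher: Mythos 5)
Your proposal is correct and takes essentially the same route as the paper, which offers no written proof beyond the remark that the result follows by ``inspecting this situation,'' i.e.\ by reading off traces from the two--projection model \eqref{eq:vNpq} exactly as you do (with $\tau(p)=\gamma_{11}+\gamma_{10}+\tfrac12|\mu|$, $\tau(q)=\gamma_{11}+\gamma_{01}+\tfrac12|\mu|$, and the $\gamma_{ij}$ for the four corner projections). The only quibble is the sentence claiming ``both sides equal $\tau(p)-\tau(q)+\bigl(\tau((1-p)\wedge q)-\tau(p\wedge(1-q))\bigr)$,'' which is not literally what you mean; the substitution you then perform, giving $\gamma_{11}+\tfrac12|\mu|$ for each side of \eqref{eq:p1qp}, is the correct argument.
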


And the following useful lemmas are also immediate.
\begin{lemmasubs}
Then there is a projection $r\in\Ac$ such that $q\le r+p$ 
and $r$ is unitarily equivalent in $\Ac$ to $q-q\wedge p$.
In particular, we have $\tau(r)=\tau(q)-\tau(q\wedge p)$.
\end{lemmasubs}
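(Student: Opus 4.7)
The plan is to construct $r$ explicitly as $r := (p\vee q) - p$ and then verify each required property, using the structural description~\eqref{eq:vNpq} of $\Ac$ together with Proposition~\ref{prop:taup}.

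First, I would note that since $p\le p\vee q$, the difference $r=(p\vee q)-p$ is indeed a self--adjoint idempotent, hence a projection in $\Ac$. The inequality $q\le r+p$ is then immediate, because $r+p=p\vee q\ge q$. For the trace, the relation~\eqref{eq:tpvq} gives
\begin{equation*}
\tau(r)=\tau(p\vee q)-\tau(p)=\tau(q)-\tau(p\wedge q)=\tau(q-q\wedge p),
\end{equation*}
which is the asserted value and is also the necessary condition for unitary equivalence.

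The main (and only nontrivial) step is verifying that $r$ is unitarily equivalent in $\Ac$ to $q-q\wedge p$. The plan is to compute both projections inside the decomposition~\eqref{eq:vNpq}. Using~\eqref{eq:p} and~\eqref{eq:q}, one reads off that $p\wedge q$ is the first summand, so
\begin{equation*}
q-q\wedge p=0\oplus0\oplus\begin{pmatrix}t&\sqrt{t(1-t)}\\\sqrt{t(1-t)}&1-t\end{pmatrix}\oplus1\oplus0,
\end{equation*}
while a direct computation of $(1-p)\wedge(1-q)$ shows $p\vee q=1\oplus1\oplus I_2\oplus1\oplus0$, hence
\begin{equation*}
r=(p\vee q)-p=0\oplus0\oplus\begin{pmatrix}0&0\\0&1\end{pmatrix}\oplus1\oplus0.
\end{equation*}
In every summand other than the $L^\infty(\mu)\otimes M_2(\Cpx)$ one, the two projections agree on the nose. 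In the $L^\infty(\mu)\otimes M_2(\Cpx)$ summand, both projections have rank $1$ in each fibre $M_2(\Cpx)$, so at each $t$ there is a unitary in $M_2(\Cpx)$ conjugating one to the other; such a unitary can be chosen measurably (indeed continuously) in $t$, yielding a unitary in the full algebra $\Ac$ implementing the equivalence.

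I do not expect any serious obstacle here: once one writes everything down in the form~\eqref{eq:vNpq}, the verification is mechanical, and the only mild subtlety is the measurable choice of the conjugating unitary on $L^\infty(\mu)\otimes M_2(\Cpx)$, which is standard since the central--valued trace of the two projections agrees.
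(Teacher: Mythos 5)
Your proposal is correct and is essentially the paper's own argument: the projection you construct, $r=(p\vee q)-p$, is exactly the element $0\oplus0\oplus\left(\begin{smallmatrix}0&0\\0&1\end{smallmatrix}\right)\oplus1\oplus0$ that the paper exhibits, and your verification (including the fibrewise conjugation in the $L^\infty(\mu)\otimes M_2(\Cpx)$ summand) just spells out what the paper leaves implicit.
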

\begin{proof}
We let
\begin{equation}
r=0\oplus 0\oplus\begin{pmatrix}0&0\\0&1\end{pmatrix}\oplus1\oplus0.
\end{equation}
\end{proof}

\begin{lemmasubs}\label{lem:pqH}
The projection onto the closure of $pq\HEu$ is equal to
$p-p\wedge(1-q)$.
\end{lemmasubs}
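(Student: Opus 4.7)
The plan is to work directly with the Hilbert space, using that the range projection of $pq$ is the orthocomplement of $\ker((pq)^*) = \ker(qp)$, and show both inclusions between this range projection and $p - p\wedge(1-q)$. Alternatively, one could read the result off the explicit presentation of $W^*(\{p,q,1\})$ developed in~\eqref{eq:vNpq}--\eqref{eq:q}, where $pq$ becomes $1\oplus 0\oplus\bigl(\begin{smallmatrix}t&\sqrt{t(1-t)}\\0&0\end{smallmatrix}\bigr)\oplus0\oplus0$, whose range projection (computed pointwise, noting $t\in(0,1)$ on the continuous summand) is precisely $1\oplus0\oplus\bigl(\begin{smallmatrix}1&0\\0&0\end{smallmatrix}\bigr)\oplus0\oplus0 = p - p\wedge(1-q)$. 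I would probably present the direct Hilbert space argument, since it avoids invoking the full structure theorem and is very short.

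For the direct argument, let $e$ denote the projection onto the closure of $pq\HEu$. For the easy inclusion $e \le p - p\wedge(1-q)$: certainly $e\le p$ because $pq\HEu \subseteq p\HEu$; and for any $\xi\in(p\wedge(1-q))\HEu$ and any $\eta\in\HEu$ we have
\begin{equation*}
\langle \xi,\, pq\eta\rangle = \langle qp\xi,\eta\rangle = \langle q\xi,\eta\rangle = 0,
\end{equation*}
since $\xi\in p\HEu\cap(1-q)\HEu$. Thus $p\wedge(1-q)\le 1-e$, which combined with $e\le p$ gives $e\le p - p\wedge(1-q)$.

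For the reverse inclusion, suppose $\xi\in\ker(qp) = (1-e)\HEu$. Then $p\xi\in\ker(q) = (1-q)\HEu$, and also $p\xi\in p\HEu$, so $p\xi\in p\HEu\cap(1-q)\HEu$, which is exactly the range of $p\wedge(1-q)$. Therefore
\begin{equation*}
\xi = p\xi + (1-p)\xi \in \bigl(p\wedge(1-q)\bigr)\HEu + (1-p)\HEu,
\end{equation*}
and since $p\wedge(1-q)\le p$ these two projections are orthogonal, their sum is again a projection, and we conclude $1 - e \le (1-p) + p\wedge(1-q)$, i.e., $e \ge p - p\wedge(1-q)$. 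Combining the two inclusions finishes the proof. The only step requiring any care is observing that $p\HEu\cap(1-q)\HEu$ really is the range of $p\wedge(1-q)$, which is the definition recalled at the start of Subsection~\ref{subsec:twoproj}; there is no substantive obstacle.
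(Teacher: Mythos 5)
Your direct argument is correct, but it is not the route the paper takes: the paper's proof is the one-line remark ``multiply the right--hand--sides of~\eqref{eq:p} and~\eqref{eq:q}'', i.e.\ exactly the alternative you sketch parenthetically, reading off the range projection of $pq=1\oplus 0\oplus\bigl(\begin{smallmatrix}t&\sqrt{t(1-t)}\\0&0\end{smallmatrix}\bigr)\oplus0\oplus0$ from the two--projection model \eqref{eq:vNpq}--\eqref{eq:q} already set up in Subsection~\ref{subsec:twoproj}. Your Hilbert--space proof is sound in every step: $e\le p$ is clear, $p\wedge(1-q)\perp \overline{pq\HEu}$ follows from $\langle\xi,pq\eta\rangle=\langle q\xi,\eta\rangle=0$, and for the reverse inclusion the identification $(1-e)\HEu=\ker(qp)$ together with $p\xi\in p\HEu\cap(1-q)\HEu$ and the orthogonal decomposition $\xi=p\xi+(1-p)\xi$ gives $1-e\le(1-p)+p\wedge(1-q)$; the final passage from $e\le p$ and $e\le 1-p\wedge(1-q)$ to $e\le p-p\wedge(1-q)$ uses only that $p\wedge(1-q)\le p$. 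What each approach buys: yours is self-contained and elementary, needing nothing beyond the definition of $p\wedge q$ as the projection onto the intersection of ranges, so it would stand even without the structure theorem; the paper's choice is economical in context, since the explicit model has already been established and is reused repeatedly (e.g.\ in Lemma~\ref{lem:Ei} and for \eqref{eq:E1mat}--\eqref{eq:E2mat}), so the computation is genuinely a one-liner there. Either proof is acceptable; just be aware that in the paper the lemma is deliberately a corollary of the two--projection picture rather than an independent argument.
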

\begin{proof}
Multiply the right--hand--sides of~\eqref{eq:p} and~\eqref{eq:q}.
\end{proof}

\subsection{Affiliated operators}

One of the virtues of a finite von Neumann algebra is that its set of affiliated operators forms
an algebra.
Here we briefly review this situation.
Recall that a closed, densely defined, (possibly unbounded) operator $X$ from $\HEu$ to itself
is said to be
{\em affiliated} with $\Mcal$ if, letting $X=v|X|$ be the polar decomposition of $X$,
we have $v\in\Mcal$ and all spectral projections of the positive operator $|X|$ lie in $\Mcal$.
Thus, we have
\begin{equation}\label{eq:|X|int}
|X|=\int_{[0,\infty)}tE_{|X|}(dt),
\end{equation}
for a projection--valued measure $E$, taking Borel subsets of $[0,\infty)$ to elements of $\Proj(\Mcal)$.
Since $\lim_{K\to+\infty}\tau(E_{|X|}([0,K])=1$, we easily see that, if $p\in\Proj(\Mcal)$, then
$p\HEu\cap\dom(X)$ is dense in $p\HEu$, where $\dom(X)$ denotes the domain of $X$.
Thus, we see that if $S$ and $T$ are densely defined operators affiliated with $\Mcal$,
then $S+T$ and $ST$ are densely defined and affiliated with $\Mcal$.

We now define some terms and notation and make some observations
that we will need later.
Let $X$ be a closed,
densely defined operator from $\HEu$ to itself, having polar decomposition $X=v|X|=|X^*|v$
and where $E_{|X|}$ is the spectral measure of the positive operator $|X|$.
The {\em kernel projection} $\kerproj(X)$ of $X$ is the
projection onto $\ker(X)$, and
the {\em domain projection} of $X$ is $\domproj(X)=1-\kerproj(X)$.
Thus,
\begin{equation}
\begin{aligned}
\kerproj(X)&=E_{|X|}(\{0\}) \\
\domproj(X)&=E_{|X|}((0,+\infty))=v^*v
\end{aligned}
\end{equation}
and
\begin{equation}
X=X\cdot\domproj(X).
\end{equation}
The {\em range projection} of $X$ is $\ranproj(X)\in\Proj(\Mcal)$ that is the projection onto
the closure of the range of $X$.
Thus,
\begin{equation}
\ranproj(X)=E_{|X^*|}((0,+\infty))=vv^*
\end{equation}
and
\begin{equation}
X=\ranproj(X)\cdot X.
\end{equation}
Therefore, we have
\begin{equation}\label{eq:taudomran}
\tau(\domproj(X))=\tau(\ranproj(X)).
\end{equation}
The {\em partial inverse} of $X$ is the operator
$Y=|Y^*|v^*$ where
\begin{equation}
|Y^*|=\int_{(0,\infty)}t^{-1}E_{|X|}(dt).
\end{equation}
Thus,
\begin{align}
XY&=\ranproj(X)=\domproj(Y) \\
YX&=\domproj(X)=\ranproj(Y).
\end{align}
Indeed, the restriction of $X$
is an injective linear operator from $\domproj(X)\HEu\cap\dom(X)$ onto $\ran(X)$, and
the restriction of $Y$ to $\ran(X)$ is this operator's inverse.

Let
\begin{equation}
X^\sharp:\Proj(\Mcal)\to
         \{q\in\Proj(\Mcal)\mid q\le\ranproj(X)\}
\end{equation}
be the map defined by
\begin{equation}
X^\sharp(p)=\ranproj(Xp).
\end{equation}
Clearly, $X^\sharp$ is order preserving
and, moreover, if $X$ and $Z$ are operators affiliated with $\Mcal$, then for any $p\in\Proj(\Mcal)$,
\begin{equation}\label{eq:XZp}
(XZ)^\sharp(p)=\ranproj(XZp)=\ranproj(X(Z^\sharp(p)))=X^\sharp Z^\sharp(p).
\end{equation}
\begin{lemmasubs}
Restricting $X^\sharp$ gives a bijection
\begin{equation}\label{eq:Xshbij}
\{p\in\Proj(\Mcal)\mid p\le\domproj(X)\}\to\{q\in\Proj(\Mcal)\mid q\le\ranproj(X)\}.
\end{equation}
Moreover, this bijection is trace preserving and a lattice isomorphism.
Finally, for any $p\in\Proj(\Mcal)$, we have
\begin{align}
X^\sharp(p)&=X^\sharp\big(\domproj(X)-(1-p)\wedge\domproj(X)\big)  \label{eq:Xshp} \\
\tau(X^\sharp(p))&=\tau(p)-\tau(p\wedge\kerproj(X)). \label{eq:tauXshp}
\end{align}
\end{lemmasubs}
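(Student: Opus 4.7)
The plan is to establish \eqref{eq:Xshp} first, since it reduces every other assertion to the cleaner case of projections dominated by $\domproj(X)$. Since $X=X\cdot\domproj(X)$, equation \eqref{eq:XZp} with $Z=\domproj(X)$ gives $X^\sharp(p)=X^\sharp(\domproj(X)^\sharp(p))$, and Lemma~\ref{lem:pqH} applied to the projections $\domproj(X)$ and $p$ identifies $\domproj(X)^\sharp(p)=\ranproj(\domproj(X)\cdot p)$ with $\domproj(X)-\domproj(X)\wedge(1-p)$, which is exactly \eqref{eq:Xshp}.

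To show that \eqref{eq:Xshbij} is a bijection, I would exhibit $Y^\sharp$ (with $Y$ the partial inverse of $X$) as the two-sided inverse of the restriction of $X^\sharp$. Using \eqref{eq:XZp} together with the identities $YX=\domproj(X)$ and $XY=\ranproj(X)$,
\begin{equation*}
Y^\sharp(X^\sharp(p))=(YX)^\sharp(p)=\ranproj(\domproj(X)\cdot p)=p\qquad\text{for }p\le\domproj(X),
\end{equation*}
and symmetrically $X^\sharp(Y^\sharp(q))=q$ for $q\le\ranproj(X)$. Both $X^\sharp$ and $Y^\sharp$ are order preserving by construction, and a mutually inverse pair of order-preserving bijections between sublattices automatically intertwines $\wedge$ and $\vee$, yielding the lattice isomorphism.

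For trace preservation, I would apply \eqref{eq:taudomran} to the closed affiliated operator $Xp$ (for $p\le\domproj(X)$), after identifying its kernel and domain projections. Specifically, if $\eta$ lies in the kernel of the closure of $Xp$ then a limit argument, together with the closedness of $X$ and the fact that $X$ is injective on $\domproj(X)\HEu\cap\dom(X)$ (via the partial inverse $Y$), forces $p\eta=0$, so $\kerproj(Xp)=1-p$ and $\domproj(Xp)=p$. Equation \eqref{eq:taudomran} then gives $\tau(X^\sharp(p))=\tau(\ranproj(Xp))=\tau(\domproj(Xp))=\tau(p)$. Combining this with \eqref{eq:Xshp} and applying \eqref{eq:p1qp} from Proposition~\ref{prop:taup} (with roles $p\mapsto\domproj(X)$ and $q\mapsto p$) rewrites $\tau(\domproj(X)-\domproj(X)\wedge(1-p))$ as $\tau(p)-\tau(p\wedge\kerproj(X))$, proving \eqref{eq:tauXshp}.

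The main obstacle is the unbounded-operator bookkeeping inside the trace-preservation step: one must justify that the closure of $Xp$ has kernel exactly $(1-p)\HEu$, which uses the injectivity of $X$ on $\domproj(X)\HEu\cap\dom(X)$ supplied by $Y$. Once this is in hand, the rest is purely algebraic manipulation of the already-established tools \eqref{eq:XZp}, \eqref{eq:taudomran}, Lemma~\ref{lem:pqH} and Proposition~\ref{prop:taup}.
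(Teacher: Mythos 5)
Your proposal is correct and follows essentially the same route as the paper: the partial inverse $Y$ together with \eqref{eq:XZp} gives the bijection and lattice isomorphism, \eqref{eq:taudomran} applied to $Xp$ gives trace preservation, and \eqref{eq:Xshp}, \eqref{eq:tauXshp} come from the two-projection formula (Lemma~\ref{lem:pqH}) and \eqref{eq:p1qp}. The only difference is cosmetic ordering and your extra care in checking $\domproj(Xp)=p$, a detail the paper states without elaboration.
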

\begin{proof}
Clearly, the restriction of $X^\sharp$ provides an order preserving map~\eqref{eq:Xshbij}.
Let $Y$ be the partial inverse of $X$, described above.
If $p\in\Proj(\Mcal)$ and $p\le\domproj(X)$,
then $YXp=p$ and consquently, using~\eqref{eq:XZp}, we have
$Y^\sharp X^\sharp(p)=p$.
Similarly, if $q\in\Proj(\Mcal)$ and $q\le\ranproj(X)$, then $XY=q$ and, consequently,
$X^\sharp Y^\sharp(q)=q$.
This shows that the restriction of $X^\sharp$ gives a bijection~\eqref{eq:Xshbij}, whose inverse
is the restriction of $Y^\sharp$ to $\{q\in\Proj(\Mcal)\mid q\le\ranproj(X)\}$.

To see that the bijection~\eqref{eq:Xshbij} is trace preserving, note that for a projection $p$
with $p\le\domproj(X)$, we have $\domproj(Xp)=p$, and by~\eqref{eq:taudomran},
\begin{equation}
\tau(p)=\tau(\ranproj(Xp))=\tau(X^\sharp(p)).
\end{equation}
An order preserving bijection between lattices is necessarily a lattice isomorphism.

Now we will show~\eqref{eq:Xshp}.
Using the form of the von Neumann
algebra generated by two projections as described in~\S\ref{subsec:twoproj},
we find that for any $p,q\in\Proj(\Mcal)$, we have
\begin{equation}
\ranproj(qp)=q-(1-p)\wedge q.
\end{equation}
Therefore,
\begin{align}
X^\sharp(p)&=\ranproj(Xp)=\ranproj(X\domproj(X)p) \\
&=\ranproj(X(\domproj(X)-(1-p)\wedge\domproj(X))) \\
&=X^\sharp(\domproj(X)-(1-p)\wedge\domproj(X))
\end{align}
and this implies
\begin{equation}\label{eq:tXp}
\tau(X^\sharp(p))=\tau(\domproj(X)-(1-p)\wedge\domproj(X)).
\end{equation}
Finally,~\eqref{eq:tauXshp} follows from~\eqref{eq:tXp} and~\eqref{eq:p1qp}.
\end{proof}

The next result concerns what may be termed {\em almost invariant subspaces} of operators.
We say $\Mcal$ is {\em diffuse} if it has no minimal nonzero projections.

\begin{propsubs}\label{prop:ais}
Assume that $\Mcal$ is diffuse.
Let $X$ be an operator affiliated with $\Mcal$ and let $0\le t\le\tau(\domproj(X))$ and $\eps>0$.
Then there are $p,q\in\Proj(\Mcal)$ such that $p,q\le\domproj(X)$, $\tau(p)=t$, $\tau(q)\le\eps$ and
\begin{equation}\label{eq:pvq}
X^\sharp(p)\le p\vee q.
\end{equation}
\end{propsubs}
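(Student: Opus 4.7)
The plan is to construct $p$ from a Haagerup--Schultz hyperinvariant projection for $X$, with the diffuseness of $\Mcal$ used to split any atoms of the Brown measure of $X$ that arise.

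By the Haagerup--Schultz theorem in its finite-von-Neumann-algebra form, for each $r\ge 0$ there is a unique $X$-hyperinvariant projection $P_r\in\Mcal$ with $\tau(P_r)=\mu_X(\overline{D}(0,r))$, satisfying $XP_r\HEu\subseteq P_r\HEu$ and $\kerproj(X)\le P_r$. Set $\alpha(r):=\tau(P_r)-\tau(\kerproj(X))$; this is non-decreasing on $[0,\infty)$ with range $[0,\tau(\domproj(X))]$, so there exists $r_0$ with $\alpha(r_0-)\le t\le\alpha(r_0)$. Denote by $\delta:=\alpha(r_0)-\alpha(r_0-)$ the mass of a possible Brown-measure atom at $\{|z|=r_0\}$.

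Now take $p_-:=P_{r_0-}-\kerproj(X)\le\domproj(X)$, which is an $X$-invariant projection of trace $\alpha(r_0-)\le t$, and enlarge it by a subprojection $s\le P_{r_0}-P_{r_0-}$ of trace $t-\alpha(r_0-)\in[0,\delta]$ to form $p:=p_-+s$. Then $\tau(p)=t$ and $p\le\domproj(X)$ (since $P_{r_0}-P_{r_0-}\perp\kerproj(X)$). Existence of $s$ with the required trace uses the diffuseness of $\Mcal$, and to make $p$ as $X$-invariant as possible I would further choose $s$ to be hyperinvariant for the compression $(P_{r_0}-P_{r_0-})X(P_{r_0}-P_{r_0-})$ (whose Brown measure is concentrated on $\{|z|=r_0\}$), applying Haagerup--Schultz a second time inside the corner algebra and invoking diffuseness once more for any further atom splittings needed to realise the prescribed trace.

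The final step is to verify that $X^\sharp(p)\le p\vee q$ for a projection $q\le\domproj(X)$ of trace at most $\eps$. The hyperinvariance of $P_{r_0-}$ together with the hyperinvariant choice of $s$ lets me track the block decomposition of $Xp$ relative to $P_{r_0}=P_{r_0-}+(P_{r_0}-P_{r_0-})$: the image $Xp_-\HEu$ lies in $P_{r_0-}\HEu$ and (since $\ran X$ avoids $\kerproj(X)$) in fact in $p_-\HEu\subseteq p\HEu$, while the image $Xs\HEu$ splits into a ``diagonal'' piece in $s\HEu\subseteq p\HEu$ and an ``off-diagonal'' piece in $P_{r_0-}\HEu\cap\ranproj(X)\HEu=p_-\HEu\subseteq p\HEu$. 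Any residue that is left over is supported inside the atomic complement $P_{r_0}-P_{r_0-}-s$, which lies below $\domproj(X)$, and is what we absorb into $q$. The hard part of the argument will be bounding $\tau(q)$ by $\eps$: this requires one more atom-splitting via diffuseness, and a careful application of Proposition~\ref{prop:taup} to estimate the relevant lattice join precisely.
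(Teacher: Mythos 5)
There is a genuine gap, and it sits exactly where you acknowledge it: the bound $\tau(q)\le\eps$ is never established, and the route you chose cannot produce it. First, the proposition concerns an arbitrary (possibly unbounded) operator affiliated with $\Mcal$; Brown measure and the Haagerup--Schultz hyperinvariant projections are not available in this generality (they require at least an $L^{\log}$--type integrability hypothesis), and nothing of that strength is needed. Second, even for bounded $X$, hyperinvariant projections only give invariant subspaces whose traces lie in the range of $r\mapsto\mu_X(\overline{D}(0,r))$; when the Brown measure has an atom (e.g.\ $X$ quasinilpotent, $\mu_X=\delta_0$), your corner construction does not repair this: the compression of $X$ to $P_{r_0}-P_{r_0-}$ can again have a single atom and no invariant subspaces of intermediate trace, so the subprojection $s$ of prescribed trace is essentially arbitrary, and $(P_{r_0}-P_{r_0-})Xs$ can have range spread throughout the corner. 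The ``residue'' you propose to absorb into $q$ then has trace comparable to $\tau(s)$, not $\eps$, and no amount of atom--splitting by diffuseness changes this, since diffuseness of $\Mcal$ says nothing about how $X$ moves a chosen subprojection. (A smaller error: your parenthetical claim that $\ran X$ avoids $\kerproj(X)$ is false for non--normal $X$, e.g.\ a nilpotent Jordan block, so even the piece $Xp_-$ need not land below $p$.)

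The paper's proof is elementary and avoids invariant subspaces altogether: it bootstraps in steps of size $\eps$, by induction on the least $n$ with $t\le n\eps$. If $t\le\eps$, take any $p\le\domproj(X)$ with $\tau(p)=t$ and put $q=X^\sharp(p)$. For the step, take $\pt\le\domproj(X)$ with $\tau(\pt)=t-\eps$ and $\qt$ with $\tau(\qt)\le\eps$, $X^\sharp(\pt)\le\pt\vee\qt$; after arranging $\pt\wedge\qt=0$, $\qt\le\domproj(X)$ and $\tau(\qt)=\eps$, set $p=\pt\vee\qt$, so $\tau(p)=t$ and
\begin{equation*}
X^\sharp(p)=X^\sharp(\pt)\vee X^\sharp(\qt)\le\pt\vee\qt\vee X^\sharp(\qt)=p\vee X^\sharp(\qt),
\end{equation*}
and the new error $q=X^\sharp(\qt)$ again has trace at most $\eps$ because $X^\sharp$ is trace preserving and join preserving on projections below $\domproj(X)$. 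That lattice--isomorphism property (already proved in the paper) is the only structural input; the almost--invariance is manufactured by the induction, not extracted from spectral or Brown--measure data of $X$. If you want to salvage your approach, you would have to prove an ``almost invariant subspace of every trace'' statement for completely non--normal corners, which is essentially the proposition itself.
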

\begin{proof}
Let $n$ be the least positive integer such that $t\le n\eps$.
We will proceed by induction on $n$.
If $n=1$, then take any $p\in\Proj(\Mcal)$ with $p\le\domproj(X)$ and $\tau(p)=t$ and let $q=X^\sharp(p)$.
Then $\tau(q)=\tau(p)=t\le\eps$.

For the induction step, suppose $n\ge2$ and $(n-1)\eps<t\le n\eps$.
By the induction hypothesis, there are $\pt,\qt\in\Proj(\Mcal)$ with $\pt\le\domproj(X)$, $\tau(\pt)=t-\eps$,
$\tau(\qt)<\eps$ and $X^\sharp(\pt)\le\pt\vee\qt$.
Replacing $\qt$ by $\qt-(\pt\wedge\qt)$, if necessary, we may without loss of generality assume
$\qt\wedge\pt=0$.
Adding something from $\domproj(X)-(\pt\vee\qt)$ to $\qt$, if necessary, we may also without loss of generality
assume $\tau(\qt)=\eps$.
Now let $p=\pt\vee\qt$.
Then $\tau(p)=t$ and
\begin{equation}
X^\sharp(p)=X^\sharp(\pt)\vee X^\sharp(\qt)
\le\pt\vee\qt\vee X^\sharp(\qt)=p\vee X^\sharp(\qt).
\end{equation}
Let $q=X^\sharp(\qt)$.
Then $\tau(q)=\tau(\qt)=\eps$ and~\eqref{eq:pvq} holds.
\end{proof}

\subsection{The complementary idempotents of two projections}
\label{subsec:idems}

In this subsection, we consider the idempotent affiliated operators associated to two
projections $e_1$ and $e_2$ in a finite von Neumann algebra $\Mcal$.
We fix a normal faithful tracial state $\tau$ on $\Mcal$ and, for convenience, we regard $\Mcal$
as acting on $\HEu:=L^2(\Mcal,\tau)$ in the GNS--representation.

We define possibly unbounded operators $E(e_1,e_2)$ and $E(e_2,e_1)$, both with domain
\begin{multline}
(1-(e_1\vee e_2))\HEu+e_1\HEu+e_2\HEu \\
=(1-(e_1\vee e_2))\HEu+(e_1-e_1\wedge e_2)\HEu+(e_2-e_1\wedge e_2)\HEu+(e_1\wedge e_2)\HEu,
\end{multline}
as follows.
For ease of notation, we write $E_1$ for $E(e_1,e_2)$ and $E_2$ for $E(e_2,e_1)$.
We set
\begin{equation}
E_i(\eta+\xi_1+\xi_2+\zeta)=\xi_i+\zeta
\end{equation}
if $\eta\in(1-(e_1\vee e_2))\HEu$, $\xi_j\in(e_j-e_1\wedge e_2)\HEu$, ($j=1,2$)
and $\zeta\in(e_1\wedge e_2)\HEu$.
It is clear that $E_i$ is well defined.

\begin{lemmasubs}\label{lem:Ei}
\renewcommand{\labelenumi}{(\roman{enumi})}
\begin{enumerate}
\item
Each operator $E_i$ is closed, affiliated with the von Neumann algebra $W^*(\{1,e_1,e_2\})$
generated by $e_1$ and $e_2$, and idempotent.
\item
We have
\begin{align}
\ranproj(E_i)&=e_i, \label{eq:ranEi} \\
\kerproj(E_i)&=1-e_1\vee e_2+e_{i'}-e_1\wedge e_2, \label{eq:kerEi} \\
\domproj(E_i)&=e_1\vee e_2-e_{i'}+e_1\wedge e_2, \label{eq:domEi}
\end{align}
where $\{i,i'\}=\{1,2\}$, and
\begin{equation}\label{eq:E1pE2}
E_1+E_2=(e_1\vee e_2)+(e_1\wedge e_2).
\end{equation}
\item
Let $f\in\Mcal$ be a projection with $f\le e_1\vee e_2$.
Then 
\begin{equation}\label{eq:fsharp}
f\le E_1^\sharp(f)\vee E_2^\sharp(f)\vee(e_1\wedge e_2-(1-f)\wedge e_1\wedge e_2).
\end{equation}
\end{enumerate}
\end{lemmasubs}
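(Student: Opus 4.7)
My plan is to prove parts (i) and (ii) by direct computation in the concrete decomposition~\eqref{eq:vNpq} of $\Ac = W^*(\{1,e_1,e_2\})$, and then to deduce part (iii) from the algebraic identity~\eqref{eq:E1pE2} together with a density argument.

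For (i) and~(ii), I observe that on each of the four atomic summands of~\eqref{eq:vNpq} the projections $e_1$ and $e_2$ are scalars in $\{0,1\}$, so the definition of $E_i$ collapses to the corresponding scalar idempotent---for example, on the $e_1\wedge(1-e_2)$ summand one has $E_1=1$, $E_2=0$, while on $(1-e_1)\wedge(1-e_2)$ both vanish. On the matricial summand $L^\infty(\mu)\otimes M_2(\Cpx)$, where fiberwise $e_1\wedge e_2=0$ and $e_1\vee e_2=1$, the defining prescription reduces to the (possibly unbounded) oblique projection onto $e_i\HEu$ along $e_{i'}\HEu$; using~\eqref{eq:p} and~\eqref{eq:q}, a short calculation gives
\begin{equation*}
E_1 = \begin{pmatrix}1 & -\sqrt{t/(1-t)} \\ 0 & 0\end{pmatrix}, \qquad
E_2 = \begin{pmatrix}0 & \sqrt{t/(1-t)} \\ 0 & 1\end{pmatrix}.
\end{equation*}
These formulas make idempotency, affiliation with $\Ac$ (via measurability in $t$), closedness, and the identities~\eqref{eq:ranEi}--\eqref{eq:E1pE2} all manifest on this summand; assembling the five summands yields the general statements.

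For (iii), the crucial input is the identity~\eqref{eq:E1pE2} from~(ii), whose right-hand side is a bounded operator. Hence on the dense subspace $\dom(E_1)\cap\dom(E_2)$ the pointwise identity $E_1v+E_2v=(e_1\vee e_2)v+(e_1\wedge e_2)v$ holds, and for $v\in f\HEu\cap\dom(E_1)\cap\dom(E_2)$ the assumption $f\le e_1\vee e_2$ gives $(e_1\vee e_2)v=v$, whence
\begin{equation*}
v = E_1v + E_2v - (e_1\wedge e_2)v.
\end{equation*}
Each summand is controlled: $E_iv=(E_if)v$ lies in the range of $E_if$, hence in $E_i^\sharp(f)\HEu$, while $(e_1\wedge e_2)v=(e_1\wedge e_2)f\,v$ lies in the range of $(e_1\wedge e_2)f$, whose range projection equals $(e_1\wedge e_2)-(1-f)\wedge(e_1\wedge e_2)$ by Lemma~\ref{lem:pqH}. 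Thus $v$ belongs to the range of the join appearing on the right-hand side of~\eqref{eq:fsharp}. Extending from the common domain to all of $f\HEu$ is handled by a standard approximation: the spectral cutoffs $p_n$ of the positive affiliated operator $|E_1f|^2+|E_2f|^2$ lie in $\Mcal$, converge strongly to $1$, and send $\HEu$ into $\dom(E_1f)\cap\dom(E_2f)$, so the vectors $fp_nw$ lie in $f\HEu\cap\dom(E_1)\cap\dom(E_2)$ and converge to any chosen $w\in f\HEu$.

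The main technical obstacle is the unboundedness of the $E_i$ that arises when $\mu$ has mass accumulating near the endpoints of $(0,1)$: this forces one to be careful with domains in both the closedness claim of~(i) and the density step of~(iii). The affiliation of each $E_i$ with the finite von Neumann algebra $\Mcal$---together with the bounded identity~\eqref{eq:E1pE2}---is precisely what makes the spectral-cutoff approximation in~(iii) go through.
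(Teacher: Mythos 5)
Your argument is correct and is essentially the paper's: for (i) and (ii) the paper likewise passes to the two--projection model and records exactly your matrix formulas (they appear as~\eqref{eq:E1mat}--\eqref{eq:E2mat}), and in (iii) your identity $v=E_1v+E_2v-(e_1\wedge e_2)v$ combined with Lemma~\ref{lem:pqH} is precisely the paper's decomposition $h=\xi_1+\xi_2+(e_1\wedge e_2)h$, followed by the same density-plus-closed-range limit step (the paper simply asserts the density of $f\HEu\cap(e_1\HEu+e_2\HEu)$ in $f\HEu$, where your spectral-cutoff argument, or the meet of the cutoffs of $|E_1f|$ and $|E_2f|$, supplies a proof).

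The one place where you are too quick is the claim that closedness is ``manifest'' from the matrix formulas. The formulas only exhibit $E_i$ as multiplication by a measurable matrix-valued function \emph{restricted} to the prescribed domain $(1-(e_1\vee e_2))\HEu+e_1\HEu+e_2\HEu$, and a restriction of a closed operator to a smaller domain need not be closed. To conclude closedness this way you must check that the prescribed domain coincides with the maximal domain of the multiplication operator; this is true but is exactly the point that needs an argument: on the matricial summand, membership of $(\xi_1,\xi_2)$ in $e_1\HEu+e_2\HEu$ amounts to $\int|\xi_2|^2(1-t)^{-1}\,d\mu<\infty$, while the maximal domain requires $\int t|\xi_2|^2(1-t)^{-1}\,d\mu<\infty$, and these agree because the two weights differ only near $t=0$, where both are bounded. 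The paper sidesteps this by proving closedness directly with a sequential argument on the defining decomposition before invoking the model; either way the lemma is established, but as written your closedness step has a small hole that should be filled by the domain comparison above.
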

\begin{proof}
To show that $E_i$ is closed, 
(taking $i=1$),
if $h^{(n)}\in\dom(E_1)$ converges to $h\in\HEu$ 
and if $E_1(h^{(n)})$ converges to $y\in\HEu$, then we may write
\begin{equation}
h^{(n)}=\eta^{(n)}+\xi_1^{(n)}+\xi_2^{(n)}+\zeta^{(n)},
\end{equation}
where $\eta^{(n)}=(1-(e_1\vee e_2))h^{(n)}$, $\zeta^{(n)}=(e_1\wedge e_2)h^{(n)}$ and where
$\xi_j^{(n)}\in(e_j-e_1\wedge e_2)\HEu$.
We then have convergence:
\begin{align}
\eta^{(n)}&\rightarrow(1-(e_1\vee e_2))h \\
\zeta^{(n)}&\rightarrow(e_1\wedge e_2)h \\
\xi_1^{(n)}=E_1(h^{(n)})-\zeta^{(n)}&\rightarrow y-(e_1\wedge e_2)h\in(e_1-e_1\wedge e_2)\HEu.
\end{align}
Thus, we also get convergence
\begin{equation}
\xi_2^{(n)}\rightarrow z:=(e_1\vee e_2-e_1\wedge e_2)(h)-y\in(e_2-e_1\wedge e_2)\HEu.
\end{equation}
Consequently, we have $h=(1-(e_1\vee e_2))h+y+z+(e_1\wedge e_2)h$ and we conclude
$E_1(h)=y$.
So $E_1$ is closed.

By the analysis in section~\ref{subsec:twoproj},
we have
\begin{equation}
W^*(\{1,e_1,e_2\})=
\smdp{\Cpx}{\gamma_{11}}{e_1\wedge e_2}
  \oplus\smdp{\Cpx}{\gamma_{10}}{e_1\wedge(1-e_2)}
  \oplus L^\infty(\mu)\otimes M_2(\Cpx)
  \oplus\smdp{\Cpx}{\gamma_{01}}{(1-e_1)\wedge e_2}
  \oplus\smdp{\Cpx}{\gamma_{00}}{(1-e_1)\wedge(1-e_2)},
\end{equation}
for some measure $\mu$ on $(0,1)$, with
\begin{align}
e_1&=1\oplus 1\oplus\begin{pmatrix}1&0\\0&0\end{pmatrix}\oplus0\oplus0 \\
e_2&=1\oplus 0\oplus\begin{pmatrix}t&\sqrt{t(1-t)}\\\sqrt{t(1-t)}&1-t\end{pmatrix}\oplus1\oplus0.
\end{align}
Now compressing by the appropriate central projections, we easily see that $E_1$ and $E_2$ are
limits in s.o.t.\ of elements of $W^*(\{1,e_1,e_2\})$, hence are affiliated with
this von Neumann algebra and, in fact, can be written as
\begin{align}
E_1&=1\oplus 1\oplus\begin{pmatrix}1&-\sqrt{t/(1-t)}\\0&0\end{pmatrix}\oplus0\oplus0 \label{eq:E1mat} \\
E_2&=1\oplus 0\oplus\begin{pmatrix}0&\sqrt{t/(1-t)}\\0&1\end{pmatrix}\oplus1\oplus0, \label{eq:E2mat}
\end{align}
where this has the obvious meaning.
It is clear from their definition that $E_1$ and $E_2$ are idempotent.
This shows~(i).

For~(ii),
we see from the definition that
\begin{equation}
\ker(E_i)=(1-e_1\vee e_2)\HEu+(e_{i'}-e_1\wedge e_2)\HEu,
\end{equation}
so we get~\eqref{eq:kerEi} and~\eqref{eq:domEi}.
Also,~\eqref{eq:ranEi} is obvious, while~\eqref{eq:E1pE2}
follows from~\eqref{eq:E1mat} and~\eqref{eq:E2mat}.

For~(iii), it is straightforward to see that
$f\HEu\cap(e_1\HEu+e_2\HEu)$ is dense in $f\HEu$, so letting $r$ be the projection
on the right--hand--side of~\eqref{eq:fsharp}, it will suffice to show
$f\HEu\cap(e_1\HEu+e_2\HEu)\subseteq r\HEu$.
Let $h\in f\HEu\cap(e_1\HEu+e_2\HEu)$.
Then $h=\xi_1+\xi_2+(e_1\wedge e_2)h$, for $\xi_i\in(e_i-e_1\wedge e_2)\HEu$.
We have
\begin{equation}
\xi_i+(e_1\wedge e_2)h=E_i(h)\in E_i^\sharp(f)\HEu,
\end{equation}
while using Lemma~\ref{lem:pqH}, we have
\begin{equation}
(e_1\wedge e_2)h\in(e_1\wedge e_2)f\HEu\subseteq(e_1\wedge e_2-(1-f)\wedge e_1\wedge e_2)\HEu.
\end{equation}
So $h\in r\HEu$.
\end{proof}

\section{Irreducible Horn triples}
\label{sec:Horn}

Horn's inequalities in the $n\times n$ matrices are of the form
\begin{equation}\label{eq:IJK}
\sum_{i\in I}\alpha_i+\sum_{j\in J}\beta_j\ge\sum_{k\in K}\gamma_k.
\end{equation}
for certain triples $(I,J,K)$ of subsets of $\{1,\ldots,n\}$.
In~\cite{H62}, Horn defined sets $T^n_r$ of triples $(I , J, K )$ of subsets of 
$\{1,\ldots,n\}$ of the same 
cardinality $r$, by the following recursive procedure.
By convention, a subset $I$ of $\{1,\ldots,n\}$ is indexed in increasing order:
\begin{equation}\label{eq:Iindexed}
I=\{i_1,\ldots,i_r\},\qquad i_1<i_2<\cdots<i_r.
\end{equation}
Set 
\begin{equation}\label{eq:Unr}
U^n_r = \bigg\{(I , J, K )\bigg|\sum_{i\in I}i+\sum_{j\in J}j=\sum_{k\in K}k+\frac{r(r+1)}2\bigg\}.
\end{equation}
When $r = 1$, set  $T^n_1 = U^n_1$.
Otherwise, let
\begin{equation}\label{eq:Tnr}
\begin{aligned}
T^n_r = \bigg\{(I,J,K)\in U^n_r\bigg|
    \sum_{f\in F}i_f+\sum_{g\in G}j_g\leq\sum_{h\in H}k_h+\frac{p(p+1)}2,& \\
\text{ for all }p < r\text{ and }(F,G,H)\in T^r_p\;\;\;&\bigg\}.
\end{aligned}
\end{equation}

We will consider a reformulation of Horn's sets $T_r^n$, which was used also in~\cite{TT74}.
Let $\sigma_n$ be the permutation of $\{1,\ldots,n\}$ given by $\sigma_n(i)=n+1-i$.
Thus, if $I$ is indexed as in~\eqref{eq:Iindexed} and if we use the same convention for indexing
$\sigma_n(I)$, namely
\begin{equation}
\sigma_n(I)=\{\itil_1,\ldots,\itil_r\},\qquad \itil_1<\itil_2<\cdots<\itil_r,
\end{equation}
then $i_j=n+1-\itil_{r+1-j}$.
We let
\begin{equation}
\Tt^n_r=\{(\sigma_n(I),\sigma_n(J),K)\mid (I,J,K)\in T_r^n\}.
\end{equation}
Reformulating Horn's definition, these sets are recursively defined as follows.
Let $\Ut_r^n$ be the set consising of triples $(I,J,K)$ of subsets of $\{1,\ldots,n\}$
with $|I|=|J|=|K|=r$ by
\begin{equation}\label{eq:Ut}
\Ut_r^n=\bigg\{(I,J,K)\bigg|\sum_{i\in I}i+\sum_{j\in J}j+\sum_{k\in K}k=\frac{r(4n-r+3)}2\bigg\}.
\end{equation}
If $r=1$, then we have $\Tt_r^n=\Ut_r^n$, while for $r\in\{2,\ldots,n-1\}$, we have
\begin{equation}\label{eq:Tt}
\begin{aligned}
\Tt^n_r = \bigg\{(I,J,K)\in U^n_r\bigg|
    \sum_{f\in F}i_f+\sum_{g\in G}j_g+\sum_{h\in H}k_h\ge\frac{p(4n-p+3)}2,& \\
\text{ for all }p < r\text{ and }(F,G,H)\in\Tt^r_p\;\;\;&\bigg\}.
\end{aligned}
\end{equation}
Now, for $(I,J,K)\in\Tt_r^n$, the corresponding Horn inequality is
\begin{equation}\label{eq:IJKt}
\sum_{i\in I}\alpha_{n+1-i}+\sum_{j\in J}\beta_{n+1-j}\ge\sum_{k\in K}\gamma_k.
\end{equation}

This reformulation of the Horn inequalities
has certain advantages.
As is apparent from the symmetry of~\eqref{eq:Ut} and~\eqref{eq:Tt},
the set $\Tt_r^n$ is invariant under permuting
the three sets $I$, $J$ and $K$.
Moreover, Proposition~\ref{prop:flagHorn} and the reduction procedure
resulting from Lemma~\ref{lem:TT1}
are more natural in this alternative expression of the Horn
inequalities.

In~\cite{TT74}, S.~Therianos and R.C.~Thompson proved that many
Horn inequalities in $T^n_r$ can be reduced to inequalities in $T^{n-1}_r$.
We will prove that similar results hold in finite von Neumann algebras.

For future use in this section, we 
record the following integration--by--parts formula for Riemann--Stieltjes
integrals, which is well known and easily proved.
\begin{lemma}\label{lem:RS}
Let $f:[0,1]\to\Reals$ be continuous and let $\lambda:[0,1]\to\Reals$ be monotone
and assume $\lambda$ is (one--sided) continuous at $0$ and $1$.
Then the Riemann--Stieltjes integrals $\int_0^1\lambda(t)df(t)$ and $\int_0^1f(t)d\lambda(t)$ exist,
and we have
\begin{equation}
\int_0^1\lambda(t)df(t)=\lambda(1)f(1)-\lambda(0)f(0)-\int_0^1f(t)d\lambda(t).
\end{equation}
\end{lemma}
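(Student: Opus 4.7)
The plan is to prove the identity by discretizing: establishing existence of both Riemann--Stieltjes integrals, writing down Riemann--Stieltjes sums for $\int_0^1\lambda\,df$, applying Abel summation (the discrete analogue of integration by parts), and then recognizing the resulting sums as Riemann--Stieltjes sums for $\int_0^1 f\,d\lambda$, with appropriate boundary terms.

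First I would verify existence of both integrals. Since $\lambda$ is monotone on $[0,1]$ it is of bounded variation, and the standard existence theorem for Riemann--Stieltjes integrals guarantees that $\int_0^1 h\,dg$ exists whenever $h$ is continuous and $g$ is of bounded variation; this delivers $\int_0^1 f\,d\lambda$ directly. For $\int_0^1 \lambda\,df$, I would use the symmetric version (exchanging the roles of integrator and integrand, which is legitimate when the two functions have no common point of discontinuity; here $f$ is continuous everywhere, so this is automatic). Existence means that there is a single limiting value, approached by Riemann--Stieltjes sums under any sequence of partitions whose mesh tends to $0$ and any choice of sample points.

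Next, fix a partition $0=t_0<t_1<\cdots<t_N=1$ with sample points $\xi_i\in[t_{i-1},t_i]$, and consider the sum
\begin{equation*}
S_N=\sum_{i=1}^N\lambda(\xi_i)\bigl(f(t_i)-f(t_{i-1})\bigr).
\end{equation*}
Abel's summation by parts rewrites this as
\begin{equation*}
S_N=\lambda(\xi_N)f(t_N)-\lambda(\xi_1)f(t_0)-\sum_{i=1}^{N-1}f(t_i)\bigl(\lambda(\xi_{i+1})-\lambda(\xi_i)\bigr).
\end{equation*}
Now augment the points $\xi_1<\xi_2<\cdots<\xi_N$ with $\xi_0=0$ and $\xi_{N+1}=1$ to form a new partition of $[0,1]$, with sample points $t_i$ for $i=0,1,\ldots,N$. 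The telescoped sum above is (up to the endpoint correction terms $f(0)(\lambda(\xi_1)-\lambda(0))$ and $f(1)(\lambda(1)-\lambda(\xi_N))$) precisely a Riemann--Stieltjes sum for $\int_0^1 f\,d\lambda$ relative to this refined partition.

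Finally, I would let the mesh of the original partition shrink to $0$, so that the mesh of the refined partition does as well. The left-hand side converges to $\int_0^1\lambda\,df$ by existence. The Riemann--Stieltjes sum converges to $\int_0^1 f\,d\lambda$. The endpoint terms $\lambda(\xi_1)f(t_0)$ and $\lambda(\xi_N)f(t_N)$ converge to $\lambda(0)f(0)$ and $\lambda(1)f(1)$ respectively, and the stray correction terms $f(0)(\lambda(\xi_1)-\lambda(0))$, $f(1)(\lambda(1)-\lambda(\xi_N))$ vanish in the limit; \emph{this is precisely where the one-sided continuity of $\lambda$ at $0$ and $1$ enters the argument}, and is in fact the only subtle point in the proof, since without it a monotone $\lambda$ could have endpoint jumps that would spoil the cancellation. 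Everything else is bookkeeping, so the main (modest) obstacle is controlling the behavior at the two endpoints, which the hypothesis has been specifically designed to handle.
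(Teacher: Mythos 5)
Your argument is correct. The paper itself offers no proof of Lemma~\ref{lem:RS} (it is recorded as ``well known and easily proved''), and what you give is the classical argument: Abel summation of the Riemann--Stieltjes sums for $\int_0^1\lambda\,df$, reinterpretation of the telescoped sum as a Riemann--Stieltjes sum for $\int_0^1 f\,d\lambda$ over the augmented partition $0=\xi_0\le\xi_1\le\cdots\le\xi_N\le\xi_{N+1}=1$ with tags $t_i$ (whose mesh is at most twice the original mesh, hence also tends to zero), and passage to the limit using the existence of $\int_0^1 f\,d\lambda$, which holds because $f$ is continuous and $\lambda$, being monotone, is of bounded variation. One correction to your commentary, though: the one-sided continuity of $\lambda$ at $0$ and $1$ is not in fact the crux, and the formula would not fail without it. If you combine the boundary term $-\lambda(\xi_1)f(t_0)$ with your correction term $f(0)\bigl(\lambda(\xi_1)-\lambda(0)\bigr)$, they sum exactly to $-\lambda(0)f(0)$, and likewise at the right endpoint, with no limit and no continuity needed; this is the classical integration-by-parts theorem for Riemann--Stieltjes integrals, valid whenever one of the two integrals exists, with no continuity hypotheses at all. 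Handling the endpoints as you do, by sending $\lambda(\xi_1)\to\lambda(0)$ and $\lambda(\xi_N)\to\lambda(1)$ and the correction terms to zero separately, is perfectly legitimate under the stated hypotheses, just not necessary. Similarly, your preliminary appeal to a ``no common discontinuities'' criterion for the existence of $\int_0^1\lambda\,df$ can be dropped: your own limiting argument shows that the sums $S_N$ converge, for every choice of partitions and tags with mesh tending to zero, to $\lambda(1)f(1)-\lambda(0)f(0)-\int_0^1 f\,d\lambda$, and that convergence is precisely the existence statement for $\int_0^1\lambda\,df$.
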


\begin{defi}
Let $\Mcal$ be a diffuse, finite von Neumann algebra with a fixed faithful normal tracial state $\tau$.
A {\em flag} in $\Mcal$ is a linearly ordered family $e=(e_t)_{0\le t\le1}$ of projections
in $\Mcal$ such that $\tau(e_t)=t$ for all $t$.

A {\em superflag} in $\Mcal$ is a family $f=(f_t)_{0\le t\le1}$ of projections in $\Mcal$ 
such that $f_s\le f_t$ whenever $s\le t$ and $\tau(f_t)\ge t$ for all $t\in[0,1]$.
\end{defi}

\begin{prop}\label{prop:subflag}
If $f=(f_t)_{0\le t\le1}$ is a superflag in $\Mcal$, then there is a flag $e=(e_t)_{0\le t\le 1}$
in $\Mcal$ such that $e_t\le f_t$ for all $t$.
\end{prop}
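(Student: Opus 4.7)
The plan is to construct the whole flag inside a single diffuse abelian von~Neumann subalgebra of $\Mcal$ containing every $f_t$, thereby reducing the proposition to a classical statement about atomless probability spaces.  The main obstacle will be the enlargement of the von~Neumann algebra generated by $(f_t)$ so as to make it diffuse; once that is done, the rest is routine measure theory.

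Since $(f_t)$ is linearly ordered, the $f_t$ commute pairwise, and so $\Afr:=W^*(\{f_t:t\in[0,1]\})$ is abelian.  If $\Afr$ happens to be diffuse, we may proceed directly; otherwise I would dissolve its atoms as follows.  Decompose $\Afr=\Afr_{\mathrm{d}}\oplus\bigoplus_k\Cpx p_k$, where $\Afr_{\mathrm{d}}$ is the diffuse part and the $p_k$ are the (at most countably many) atoms.  Since $\Mcal$ is diffuse, so is each corner $p_k\Mcal p_k$, and therefore each such corner contains a diffuse abelian unital subalgebra $B_k$.  Because $\Afr_{\mathrm{d}}$ and the $B_k$ are supported on pairwise orthogonal central projections of $\Afr$, the join $\widetilde{\Afr}:=\Afr_{\mathrm{d}}\oplus\bigoplus_kB_k$ is an abelian, diffuse von~Neumann subalgebra of $\Mcal$ that contains $\Afr$.

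Next I would identify $\widetilde{\Afr}\cong L^\infty(Y,\nu)$ with $\nu$ atomless and write $f_t=1_{F_t}$.  Define $\psi:Y\to[0,1]$ by $\psi(y)=\inf\{s:y\in F_s\}$; this is well defined up to null sets because $\tau(f_1)\ge 1$ forces $F_1=Y$.  The superflag condition yields $G(t):=\nu(\psi\le t)\ge\nu(F_t)\ge t$, and hence $G(t{-})\ge t$ as well.  The remaining task is to exhibit $\phi\in L^\infty(Y,\nu)$ uniformly distributed on $[0,1]$ and satisfying $\phi\ge\psi$ almost everywhere.  This is a classical consequence of atomlessness:  one may take the jittering formula $\phi=G(\psi{-})+U\cdot(G(\psi)-G(\psi{-}))$, where $U\in\widetilde{\Afr}$ is uniform on $[0,1]$ and independent of $\psi$, or transfer the non-decreasing rearrangement $\psi^*$ of $\psi$ on $[0,1]$ through a measure-preserving identification, noting that $\psi^*\le\mathrm{id}$ because $G$ dominates the identity.

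Finally, I would set $e_t:=1_{\{\phi<t\}}\in\widetilde{\Afr}\subseteq\Mcal$.  Uniformity of $\phi$ gives $\tau(e_t)=\nu(\phi<t)=t$, the family is manifestly monotone in $t$, and $\phi\ge\psi$ yields $\{\phi<t\}\subseteq\{\psi<t\}=\bigcup_{s<t}F_s\subseteq F_t$, which produces $e_t\le f_t$.  Thus the hypothesis that $\Mcal$ is diffuse enters essentially only in the construction of $\widetilde{\Afr}$, and that is the step that I expect to require the most care.
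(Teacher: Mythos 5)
Your strategy is genuinely different from the paper's, and much heavier. The paper settles the proposition with a single Zorn's lemma argument carried out directly in $\Mcal$: take a maximal totally ordered family $\St$ of projections containing every $f_t$; if some value $t\in[0,1]$ were missed by $\{\tau(p)\mid p\in\St\}$, it would lie strictly between $\tau(p_-)$ and $\tau(p_+)$ for suitable $p_\pm\in\St$, and diffuseness of $\Mcal$ supplies a projection between $p_-$ and $p_+$ of trace exactly $t$, contradicting maximality; one then defines $e_t$ to be the unique element of $\St$ of trace $t$, and comparability with $f_t$ together with $\tau(f_t)\ge t$ forces $e_t\le f_t$. No passage to an abelian subalgebra, no dissolving of atoms, no measure-theoretic identification is needed.

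Your reduction to $L^\infty(Y,\nu)$ and the inequality $G(t)\ge t$ are fine (modulo the routine care of defining $\psi$ via rational indices so that only countably many null sets intervene), but the decisive step has a genuine gap as written: the jittering formula requires $U\in\widetilde{\Afr}$ uniform on $[0,1]$ and independent of $\psi$, and such a $U$ need not exist. For example, take $\Mcal=L^\infty[0,1]$ with Lebesgue trace and the superflag $f_t=1_{[0,\min(2t,1)]}$; then $\widetilde{\Afr}=\Afr=L^\infty[0,1]$ and $\psi(y)=y/2$ generates the whole algebra, so any element independent of $\psi$ is a.s.\ constant. (Here $G$ is continuous and $\phi=G(\psi)$ happens to work, but that is an accident of the example.) The repair is to observe that independence is only needed conditionally on the atoms of the law of $\psi$: set $\phi=G(\psi)$ off the (at most countably many) values $t_0$ with $\nu(\psi=t_0)>0$, and on each set $\{\psi=t_0\}$, whose restricted measure is atomless, spread $\phi$ uniformly over $[G(t_0{-}),G(t_0)]$ using a uniform variable on that set. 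But note that producing a uniform variable on an atomless measure algebra is exactly the case $f_t\equiv1$ of the proposition (a flag in a diffuse abelian algebra), so some exhaustion or maximality argument cannot be avoided; the same remark applies to your diffuse abelian subalgebras $B_k$ of the corners $p_k\Mcal p_k$, and your alternative route via a ``measure-preserving identification'' intertwining $\psi$ with its rearrangement $\psi^*$ is not automatic---constructing that identification is essentially constructing the coupling you want. So the skeleton is sound and can be completed, but as stated the key step either invokes an object that need not exist or defers the real work; the paper's maximal-chain argument does that work directly, once, in $\Mcal$.
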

\begin{proof}
Let $\mathcal{S}$ be the set of sets of projections of $\mathcal{M}$ such that
for any $S\in\mathcal{S}$ and any $t\in [0,1]$, $f_t\in S$, and for all $p,q\in S$, either $p\geq q$
or $q\geq p$.

The set $\mathcal{S}$ is a Zorn inductive set for the obvious order given by inclusion.
Let $\St$ be a maximal element. 
The set of values $\tau (p),p\in\St$ is closed by maximality.
Suppose, to obtain a contradiction, that this set is not all of $[0,1]$.
Let $t\in[0,1]$ be a value that is not attained,
and let 
\begin{equation}
\begin{aligned}
t_-&=\sup\{\tau(p)\mid p\in\St,\,\tau(p)<t\} \\
t_+&=\inf\{\tau(p)\mid p\in\St,\,\tau(p)>t\},
\end{aligned}
\end{equation}
so that we have
$t_-<t<t_+$.
Let $p_{\pm}$ in $\St$ such that
$\tau (p_{\pm})=t_{\pm}$.
By elementary properties of finite diffuse von Neuman algebras, there is a projection $p\in\Mcal$
between $p_{-}$ and $p_{+}$ such that $\tau (p)=t$.
This contradicts maximality of $\St$.

To construct the flag, for each $t$, let $e_t$ be the unique $p\in\St$ such that $\tau(p)=t$.
\end{proof}

Property P$_n$ below is the von Neumann algebra analogue of Therianos and Thompson's property
of the same name (which applied to matrices).

\begin{defi}\label{def:Pn}
Let $r$ and $n$ be positive integers with $r\le n$.
Consider a triple $(I,J,K)$ of subsets of $\{1,\ldots,n\}$, each having cardinality $r$.
Write
\begin{align}
I=\{i_1,\ldots,i_r\},&\qquad i_1< i_2<\cdots< i_r \label{eq:Ielts} \\
J=\{j_1,\ldots,j_r\},&\qquad j_1< j_2<\cdots< j_r \\
K=\{k_1,\ldots,k_r\},&\qquad k_1< k_2<\cdots< k_r. \label{eq:Kelts}
\end{align}
We say $(I,J,K)$ has {\em property P$_n$} if whenever $e$, $f$ and $g$ are flags in any finite von Neumann algebra
$(\Mcal,\tau)$, 
there exists a projection $p\in\Mcal$ such that
\begin{equation}\label{eq:taup=}
\tau(p)=\frac rn
\end{equation}
and for all $\ell\in\{1,2,\ldots, r\}$, we have
\begin{align}
\tau(e_{\frac{i_\ell}n}\wedge p)&\ge\frac \ell n \label{eq:ewp} \\[0.7ex]
\tau(f_{\frac{j_\ell}n}\wedge p)&\ge\frac \ell n \\[0.7ex]
\tau(g_{\frac{k_\ell}n}\wedge p)&\ge\frac \ell n\,. \label{eq:gwp}
\end{align}
We say that $(I,J,K)$ as {\em property AP$_n$} if whenever $e$, $f$ and $g$ are flags in
any finite von Neumann algebra $(\Mcal,\tau)$, 
and whenever $\eps>0$, there is a projection $p\in\Mcal$ such that 
\begin{equation}
\tau(p)\le\frac rn+\eps
\end{equation}
and for all $\ell\in\{1,2,\ldots, r\}$, the inequalities \eqref{eq:ewp}--\eqref{eq:gwp} hold.
\end{defi}

The following result is analogous to well--known facts in $n\times n$ matrices.
The proof in the case of property P$_n$ was can easily be found in~\cite{BL01}, and the approximate result
follows straightforwardly.
For convenience, we write a proof pointing to the appropriate parts of~\cite{BL01}.

\begin{prop}\label{prop:flagHorn}
If $(I,J,K)\in\Tt_r^n$ has property P$_n$ or, more generally, property AP$_n$,
then the Horn inequality corresponding to $(\sigma_n(I),\sigma_n(J),K)$ holds
in every finite von Neumann algebra.
\end{prop}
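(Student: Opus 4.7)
The plan is to combine the assumed flag-intersection property with a continuous Ky Fan--type bound. After tensoring $\Mcal$ with a diffuse abelian algebra if necessary, I may assume $\Mcal$ is diffuse; this does not change the eigenvalue functions $\lambda_a$, $\lambda_b$, $\lambda_{a+b}$ or the desired inequality. Given $a, b \in \Mcal_{s.a.}$, I build three flags in $\Mcal$: $e = (e_t)$ where $e_t$ is a spectral projection of $a$ capturing the bottom $t$-mass of its distribution; $f = (f_t)$ built analogously from $b$; and $g = (g_t)$ where $g_t$ is a spectral projection of $a+b$ capturing the top $t$-mass. All three exist because $\Mcal$ is diffuse. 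Applying property P$_n$ to $(I,J,K)$ with the flags $(e,f,g)$ yields a projection $p \in \Mcal$ with $\tau(p) = r/n$ satisfying the intersection inequalities \eqref{eq:ewp}--\eqref{eq:gwp}.

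The key reduction is to three estimates:
\begin{equation}
\tau(ap) \le \int_{\omega_{\sigma_n(I)}} \lambda_a(t)\,dt, \quad
\tau(bp) \le \int_{\omega_{\sigma_n(J)}} \lambda_b(t)\,dt, \quad
\tau((a+b)p) \ge \int_{\omega_K} \lambda_{a+b}(t)\,dt.
\end{equation}
Since $\tau(ap) + \tau(bp) = \tau((a+b)p)$, adding these three gives the Horn inequality \eqref{eq:contsHorn} for $(\sigma_n(I), \sigma_n(J), K)$.

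For the first estimate, I consider $\psi(u) = \tau(p e_u)$, which is non-decreasing on $[0,1]$ with $\psi(0) = 0$, $\psi(1) = r/n$, and $\psi(u) \ge \tau(p \wedge e_u) \ge \ell/n$ whenever $u \ge i_\ell/n$. Spectral integration gives $\tau(ap) = \int_0^1 \lambda_a^{\uparrow}(u)\,d\psi(u)$, where $\lambda_a^{\uparrow}$ denotes the non-decreasing rearrangement of $\lambda_a$; direct computation shows $\frac{1}{n}\sum_{\ell=1}^r \lambda_a^{\uparrow}(i_\ell/n) = \int_{\omega_{\sigma_n(I)}} \lambda_a(t)\,dt$. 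Letting $\psi_0$ be the step function with jumps of $1/n$ at each $u = i_\ell/n$, so that $\psi \ge \psi_0$ pointwise, integration by parts (Lemma~\ref{lem:RS}) yields
\begin{equation}
\int_0^1 \lambda_a^{\uparrow}\,d\psi - \int_0^1 \lambda_a^{\uparrow}\,d\psi_0 = -\int_0^1 (\psi - \psi_0)\,d\lambda_a^{\uparrow} \le 0,
\end{equation}
since $d\lambda_a^{\uparrow}$ is a positive measure, $\psi - \psi_0 \ge 0$, and the boundary terms vanish because $\psi(0) = \psi_0(0)$ and $\psi(1) = \psi_0(1)$. This gives the first estimate; the second is identical, and the third is the symmetric lower-bound version using the top-flag $g$ and the non-increasing $\lambda_{a+b}$.

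For property AP$_n$, the projection $p$ satisfies only $\tau(p) \le r/n + \eps$. Shifting $a, b$ by multiples of the identity (which preserves the Horn inequality) to arrange $a, b \ge 0$, the argument above produces estimates for $\tau(ap)$ and $\tau(bp)$ with additive errors of order $\eps \cdot \max(\|a\|, \|b\|)$, while $\tau((a+b)p) \ge \int_{\omega_K} \lambda_{a+b}(t)\,dt$ still holds because extra mass of $p$ contributes non-negatively to $\tau((a+b)p)$ when $a+b \ge 0$. Letting $\eps \to 0$ completes the proof. The main technical point is the Riemann--Stieltjes comparison in the displayed equation above, essentially the rearrangement argument of Bercovici--Li \cite{BL01} transplanted to the symmetric formulation $\Tt^n_r$; the other potential worry, the need to work in a diffuse algebra to form flags, is handled by the routine tensoring argument at the outset.
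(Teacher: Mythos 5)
Your overall plan (build the three flags from the spectral scales of $a$, $b$, $a+b$, get $p$ from property P$_n$, and add the three estimates using $\tau(ap)+\tau(bp)=\tau((a+b)p)$) is the same strategy as the paper's, but the central estimate has a genuine gap. The ``direct computation'' you invoke, namely
$\frac1n\sum_{\ell=1}^r\lambda_a^{\uparrow}(i_\ell/n)=\int_{\omega_{\sigma_n(I)}}\lambda_a(t)\,dt$,
is false unless $\lambda_a$ happens to be constant on each interval $(\frac{i-1}n,\frac in)$ (the matrix case). Since $\lambda_a^{\uparrow}$ is non-decreasing, one only has
$\int_{(i_\ell-1)/n}^{i_\ell/n}\lambda_a^{\uparrow}(u)\,du\le\frac1n\lambda_a^{\uparrow}(i_\ell/n)$,
so the Riemann sum dominates the integral, which is the wrong direction: your chain gives $\tau(ap)\le\frac1n\sum_\ell\lambda_a^{\uparrow}(i_\ell/n)$, and this does not imply $\tau(ap)\le\int_{\omega_{\sigma_n(I)}}\lambda_a$. (Concretely, for $\mu_a$ uniform on $[0,1]$, $n=2$, $I=\{1\}$, the sum is $\frac14$ while the integral is $\frac18$.) The same defect occurs, mirrored, in your lower bound for $\tau((a+b)p)$: comparing with a step function jumping at the points $k_\ell/n$ only certifies $\tau((a+b)p)\ge\frac1n\sum_\ell\lambda_{a+b}(k_\ell/n)$, which is weaker than $\int_{\omega_K}\lambda_{a+b}$ because $\lambda_{a+b}$ is non-increasing. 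So the inequality you need never comes out of the step-function comparison.

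The repair is exactly the device the paper uses: compare $\psi(u)=\tau(pe_u)$ (or $\tau(p\wedge e_u)$) not with a step function but with the piecewise-linear function $W(u)=$ Lebesgue measure of $\omega\cap[0,u]$, for which $\int_0^1\lambda\,dW$ \emph{equals} the desired integral over $\omega$ exactly, with no loss from within-interval variation of $\lambda$. To get $\psi\ge W$ pointwise you must use, in addition to the values $\psi(i_\ell/n)\ge\ell/n$ forced by property P$_n$, the $1$--Lipschitz estimate $\psi(t)-\psi(s)\le t-s$ for $s\le t$ (for $\tau(p\wedge e_t)$ this follows from~\eqref{eq:tpvq}; for $\tau(pe_t)$ from $\tau(p(e_t-e_s))\le\tau(e_t-e_s)$), since on the intervals where $W$ has slope $1$ the constraint at the right endpoint must be propagated backwards. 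This Lipschitz step is entirely absent from your write-up, and without it the comparison function cannot be upgraded from the step function to $W$. The rest of your argument (the integration by parts with matching boundary values, the reduction to three estimates, the shift-and-$\eps$ treatment of property AP$_n$) is sound once this is fixed, and then your proof becomes essentially the paper's, phrased for $a,b,a+b$ directly rather than for $x=-a$, $y=-b$, $z=a+b$ with $x+y+z=0$.
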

\begin{proof}
First suppose that $(I,J,K)$ has property P$_n$.
Let $\It=\sigma_n(I)$, $\Jt=\sigma_n(J)$.
We must show, given any finite von Neumann algebra $(\Mcal,\tau)$ and any $a,b\in\Mcal_{s.a.}$, that we have
\begin{equation}\label{eq:Hab}
\int_{\omega_\It}\lambda_a(t)\,dt+\int_{\omega_\Jt}\lambda_b(t)\,dt\ge\int_{\omega_K}\lambda_{a+b}(t)\,dt.
\end{equation}
But $\omega_{\It}=1-\omega_I:=\{1-t\mid t\in\omega_I\}$ and $\lambda_a(t)=-\lambda_{-a}(1-t)$,
so letting $x=-a$, $y=-b$ and $z=a+b$, the inequality~\eqref{eq:Hab} becomes
\begin{equation}
\int_{\omega_I}\lambda_x(t)\,dt+\int_{\omega_J}\lambda_y(t)\,dt+\int_{\omega_K}\lambda_z(t)\,dt\le0,
\end{equation}
which must be proved for all $x,y,z\in\Mcal_{s.a}$ such that $x+y+z=0$.

Let $E_x$, $E_y$ and $E_z$ be the spectral measures of $x$, $y$ and $z$.
As described on page~115 of~\cite{BL01}, there are flags $e$, $f$ and $g$ in $\Mcal$ such that
\begin{align}
x&=\int_0^1\lambda_x(t)\,de_t \\
y&=\int_0^1\lambda_y(t)\,df_t \\
z&=\int_0^1\lambda_z(t)\,dg_t
\end{align}
where these integrals are the operator--valued analogues of Riemann--Stieltjes integrals,
and, for all $t\in[0,1]$, we have
\begin{align}
E_x((\lambda_x(t),\infty))&\le e_t \label{eq:Eex} \\
E_y((\lambda_y(t),\infty))&\le f_t \label{eq:Efy} \\
E_z((\lambda_z(t),\infty))&\le g_t\,. \label{eq:Egz}
\end{align}
Consider the nondecreasing function $W_I$ on $[0,1]$ which at $t$ takes value equal
to the Lebesgue measure of $\omega_I\cap[0,t]$.
Then $W_I$ is piecewise linear, has slope $1$ on intervals $(\frac{i-1}n,\frac in)$ for $i\in I$
(thus, at points of $\omega_I$)
and has slope $0$ elsewhere.
Furthermore,
\begin{equation}\label{eq:omegaW}
\int_{\omega_I}\lambda_x(t)\,dt=\int_0^1\lambda_x(t)\,dW_I(t),
\end{equation}
where the right--hand--side is the Riemann--Stieltjes integral.

Using that $(I,J,K)$ has property P$_n$, let $p\in\Mcal$ be a projection
satisfying~\eqref{eq:taup=} and~\eqref{eq:ewp}--\eqref{eq:gwp}.
Using~\eqref{eq:ewp}, we get
\begin{equation}\label{eq:taupeW}
\tau(p\wedge e_t)\ge W_I(t)
\end{equation}
whenever $t=\frac in$ with $i\in I$.
Moreover, taking $0\le s\le t\le 1$ and using Proposition~\ref{prop:taup}, we have
\begin{align}
\tau(p\wedge e_s)&=\tau((p\wedge e_t)\wedge e_s) \\
&=\tau(p\wedge e_t)+\tau(e_s)-\tau((p\wedge e_t)\vee e_s) \\
&\ge\tau(p\wedge e_t)+\tau(e_s)-\tau(e_t) \\
&=\tau(p\wedge e_t)-(t-s).  
\end{align}
This implies both that $\tau(p\wedge e_t)$ is a continous function of $t$ and that~\eqref{eq:taupeW} holds
at all points $t\in\omega_I$ and, of course, at $t=0$, where both sides are zero.
However, since $W_I(t)$ is constant elsewhere and since $\tau(p\wedge e_t)$ is increasing,
the inequality~\eqref{eq:taupeW} holds for all $t\in[0,1]$.
We define $\lambda_x(1)$ to make $\lambda_x$ continuous from the right at $1$.
Using Lemma~\ref{lem:RS} and that we have
\begin{gather}
W_I(0)=0=\tau(p\wedge e_0) \\
W_I(1)=\frac rn=\tau(p\wedge e_1)
\end{gather}
we get
\begin{align}
\int_0^1\lambda_x(t)\,dW_I(t)&=\lambda_x(1)\frac rn+\int_0^1W_I(t)\,d(-\lambda_x)(t) \label{eq:Wtauint1} \\
&\le\lambda_x(1)\frac rn+\int_0^1\tau(p\wedge e_t)\,d(-\lambda_x)(t) \\
&=\int_0^1\lambda_x(t)\,d(\tau(p\wedge e_t)), \label{eq:Wtauint3} 
\end{align}
where the above inequality is because $-\lambda_x$ is nondecreasing and the inequality~\eqref{eq:taupeW} holds.
However, by Proposition~2.1 of~\cite{BL01}, we have
\begin{equation}
\int_0^1\lambda_x(t)\,d\tau(p\wedge e_t)\le\tau(xp).
\end{equation}
Putting this together with~\eqref{eq:omegaW} and~\eqref{eq:Wtauint1}--\eqref{eq:Wtauint3}, we have
\begin{equation}
\int_{\omega_I}\lambda_x(t)\,dt\le\tau(xp).
\end{equation}
Arguing similarly for $y$ and $z$, we get
\begin{equation}
\int_{\omega_I}\lambda_x(t)\,dt+\int_{\omega_J}\lambda_y(t)\,dt+\int_{\omega_K}\lambda_z(t)\,dt
\le\tau((x+y+z)p)=0,
\end{equation}
as required.

Now suppose $(I,J,K)$ has property AP$_n$.
Letting $\eps>0$, we may argue as above, except that instead of being able to choose $p$ so that
\eqref{eq:taup=} and~\eqref{eq:ewp}--\eqref{eq:gwp} are satisfied, in place of the equality~\eqref{eq:taup=}
we may only assume
\begin{equation}
\tau(p)\le\frac rn+\eps.
\end{equation}
Now instead of getting
$\int_0^1\lambda_x(t)\,dW_I(t)\le\int_0^1\lambda_x(t)\,d(\tau(p\wedge e_t))$
as we did in~\eqref{eq:Wtauint1}--\eqref{eq:Wtauint3}, we get
\begin{equation}
\int_0^1\lambda_x(t)\,dW_I(t)\le|\lambda_x(1)|\eps+\int_0^1\lambda_x(t)\,d(\tau(p\wedge e_t)).
\end{equation}
Using $|\lambda_x(1)|\le\|x\|$ and arguing as above, we get
\begin{equation}
\int_{\omega_I}\lambda_x(t)\,dt+\int_{\omega_J}\lambda_y(t)\,dt+\int_{\omega_K}\lambda_z(t)\,dt
\le\eps(\|x\|+\|y\|+\|z\|).
\end{equation}
Letting $\eps$ tend to zero yields the desired inequality.
\end{proof}

The following lemma is an analogue for finite von Neumann algebras
of Lemma~1 of~\cite{TT74}.
We will use it to reduce the set of Horn inequalities that must be verified in finite von Neumann algebras.

Let
\begin{equation}
h_x(y)=
\begin{cases}
0,&y\le x \\
1,&y>x.
\end{cases}
\end{equation}

\begin{lemma}\label{lem:TT1}
Let $1\le r\le n$ be integers.
Let $(I,J,K)$ be a triple of subsets of $\{1,\ldots,n\}$ satisfying~\eqref{eq:Ielts}--\eqref{eq:Kelts}
and assume this triple has property P$_n$, respectively, property AP$_n$.
Also set $i_0=j_0=k_0=0$.
Suppose $u,v,w\in\{0,1,\ldots,r\}$ are such that
\begin{equation}\label{eq:iujvkw}
i_u+j_v+k_w\le n.
\end{equation}
Set
\begin{equation}\label{eq:ijky}
\begin{aligned}
i'_y&=i_y+h_u(y) \\
j'_y&=j_y+h_v(y) \\
k'_y&=k_y+h_w(y)
\end{aligned}
\qquad(y\in\{1,\ldots,r\}).
\end{equation}
and let
\begin{equation}
I'=\{i'_1,\ldots,i'_r\},\quad
J'=\{j'_1,\ldots,j'_r\},\quad
K'=\{k'_1,\ldots,k'_r\}.
\end{equation}
Then $(I',J',K')$ has property P$_{n+1}$, respectively, property AP$_{n+1}$.
\end{lemma}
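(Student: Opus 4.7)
My plan is to reduce property P$_{n+1}$ for $(I',J',K')$ to property P$_n$ for $(I,J,K)$ via a compression/rescaling trick that parallels the matrix argument of Therianos and Thompson while exploiting the flexibility of a diffuse tracial von Neumann algebra. Given any diffuse finite von Neumann algebra $(\Mcal',\tau')$ with flags $e',f',g'$, the hypothesis $i_u+j_v+k_w\le n$ yields
\begin{equation*}
\tau'\bigl(1-(e'_{i_u/(n+1)}\vee f'_{j_v/(n+1)}\vee g'_{k_w/(n+1)})\bigr)\ge\tfrac{1}{n+1},
\end{equation*}
so by diffuseness I can pick a projection $s\in\Mcal'$ of $\tau'$-trace exactly $1/(n+1)$ orthogonal to each of $e'_{i_u/(n+1)}$, $f'_{j_v/(n+1)}$, $g'_{k_w/(n+1)}$. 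Passing to $\Mcal:=(1-s)\Mcal'(1-s)$ with renormalized trace $\tau:=\tfrac{n+1}{n}\tau'|_\Mcal$ gives a diffuse finite von Neumann algebra with faithful tracial state.

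The crux is to construct flags $\et,\ft,\gt$ in $\Mcal$ whose anchor values at the unshifted parameters lie below the corresponding projections of $e',f',g'$ at the \emph{shifted} parameters $i'_\ell/(n+1)$, $j'_\ell/(n+1)$, $k'_\ell/(n+1)$. For the $e$-flag: set $\et_{i_\ell/n}:=e'_{i_\ell/(n+1)}$ for $\ell\le u$ (these lie below $1-s$ by orthogonality of $s$ to $e'_{i_u/(n+1)}$), and for $\ell>u$ inductively select $\et_{i_\ell/n}$ to satisfy
\begin{equation*}
\et_{i_{\ell-1}/n}\le\et_{i_\ell/n}\le e'_{(i_\ell+1)/(n+1)}\wedge(1-s),\qquad\tau'(\et_{i_\ell/n})=\tfrac{i_\ell}{n+1}.
\end{equation*}
The necessary room is present because $\tau'\bigl(e'_{(i_\ell+1)/(n+1)}\wedge(1-s)\bigr)\ge i_\ell/(n+1)$ and the previously chosen $\et_{i_{\ell-1}/n}$ already lies under $e'_{(i_\ell+1)/(n+1)}\wedge(1-s)$ (using $i_{\ell-1}<i_\ell$ and $\et_{i_{\ell-1}/n}\le 1-s$). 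Diffuseness of $\Mcal$ then lets me interpolate between these anchor projections (extending up to $1_\Mcal=1-s$) to obtain a full flag $\et$ in $\Mcal$; the flags $\ft$ and $\gt$ are built in exactly the same way from $f',g'$ and $v,w$.

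Invoking property P$_n$ on $(I,J,K)$ for the flags $\et,\ft,\gt$ now yields $p\in\Mcal$ of $\tau$-trace $r/n$ satisfying \eqref{eq:ewp}--\eqref{eq:gwp}. Viewed as a projection in $\Mcal'$ it has $\tau'$-trace $\tfrac{r}{n+1}$, and the built-in inclusion $\et_{i_\ell/n}\le e'_{i'_\ell/(n+1)}$ (tautological for $\ell\le u$, by design for $\ell>u$) gives
\begin{equation*}
\tau'(p\wedge e'_{i'_\ell/(n+1)})\ge\tau'(p\wedge\et_{i_\ell/n})=\tfrac{n}{n+1}\tau(p\wedge\et_{i_\ell/n})\ge\tfrac{\ell}{n+1},
\end{equation*}
and analogously against $f'$ and $g'$, establishing P$_{n+1}$ for $(I',J',K')$. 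For AP$_{n+1}$, I run the same construction driven by AP$_n$ with parameter $\tfrac{n+1}{n}\eps$, and the rescaling produces $p$ with $\tau'(p)\le r/(n+1)+\eps$. The main obstacle is the simultaneous construction of three compatible flags in $\Mcal$; this is precisely what a single $s$ under the complement of the triple join enables, and the inequality $i_u+j_v+k_w\le n$ supplies exactly the trace room needed to pick such an $s$.
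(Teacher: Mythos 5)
Your argument is correct and is essentially the paper's proof: you compress by a projection of co--trace $\tfrac1{n+1}$ dominating $e'_{i_u/(n+1)}\vee f'_{j_v/(n+1)}\vee g'_{k_w/(n+1)}$ (your $1-s$ is the paper's $q$), rescale the trace on the corner, build flags there that sit below the shifted flag projections, apply P$_n$ (resp.\ AP$_n$), and transport the resulting projection back. The only cosmetic difference is that the paper packages the intermediate step as explicit superflags $e'_t$ and invokes Proposition~\ref{prop:subflag}, whereas you construct the flags anchor--by--anchor at the points $i_\ell/n$, $j_\ell/n$, $k_\ell/n$ and interpolate by diffuseness, which amounts to the same thing.
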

\begin{proof}
Let $(\Mcal,\tau)$ be a diffuse, finite von Neumann algebra and let $e$, $f$ and $g$ be any flags in $\Mcal$.
Suppose $(I,J,K)$ has property P$_n$.
{}From~\eqref{eq:iujvkw}, we have
\begin{equation}
\tau(e_{\frac{i_u}{n+1}}\vee f_{\frac{j_v}{n+1}}\vee g_{\frac{k_w}{n+1}})\le\frac n{n+1}\,.
\end{equation}
Let $q\in\Mcal$ be a projection such that $\tau(q)=\frac n{n+1}$ and
\begin{equation}
e_{\frac{i_u}{n+1}}\vee f_{\frac{j_v}{n+1}}\vee g_{\frac{k_w}{n+1}}\le q.
\end{equation}
Then $q\wedge e_t=e_t$ if $t\le\frac{i_u}{n+1}$ and, for all $t$,
$\tau(q\wedge e_t)\ge t-\frac1{n+1}$.
Similar results hold for $f$ and $g$.
Define
\begin{align}
e'_t&=
\begin{cases}
e_{\frac{nt}{n+1}}\,,&0\le t\le\frac{i_u}n \\
e_{\frac{nt+1}{n+1}}\wedge q,&\frac{i_u}n<t\le1
\end{cases} \\
f'_t&=
\begin{cases}
f_{\frac{nt}{n+1}}\,,&0\le t\le\frac{j_v}n \\
f_{\frac{nt+1}{n+1}}\wedge q,&\frac{j_v}n<t\le1,
\end{cases} \\
g'_t&=
\begin{cases}
g_{\frac{nt}{n+1}}\,,&0\le t\le\frac{k_w}n \\
g_{\frac{nt+1}{n+1}}\wedge q,&\frac{k_w}n<t\le1.
\end{cases}
\end{align}
Then in the cut--down von Neumann algebra $q\Mcal q$, equipped with the rescaled trace
$\frac{n+1}n\tau|_{q\Mcal q}$, $e'$, $f'$ and $g'$ are superflags.
Invoking Proposition~\ref{prop:subflag}, let $\et$, $\ft$ and $\gt$ be flags in
$q\Mcal q$ such that $\et_t\le e'_t$, $\ft_t\le f'_t$ and $\gt_t\le g'_t$ for all $t\in[0,1]$.
Then we have
\begin{gather}
\et_t=e'_t=e_{\frac{nt}{n+1}}\,,\qquad(0\le t\le\frac{i_u}n) \label{eq:etteq} \\[0.7ex]
\et_t\le e'_t=e_{\frac{nt+1}{n+1}}\wedge q\,,\qquad(\frac{i_u}n<t\le 1). \label{eq:ettle}
\end{gather}
By the assumption that $(I,J,K)$ has property P$_n$, there is a projection $p\in q\Mcal q$ such that
\begin{equation}\label{eq:rstauple}
\frac{n+1}n\tau(p)\le\frac rn
\end{equation}
and, for all $y\in\{1,\ldots,r\}$, we have
\begin{align}
\frac{n+1}n\tau(\et_{\frac{i_y}n}\wedge p)&\ge\frac yn \label{eq:tauetge} \\[0.7ex]
\frac{n+1}n\tau(\ft_{\frac{j_y}n}\wedge p)&\ge\frac yn \\[0.7ex]
\frac{n+1}n\tau(\gt_{\frac{k_y}n}\wedge p)&\ge\frac yn\,.
\end{align}
We will show that $p$ is the desired projection for $(I',J',K')$ to have property P$_{n+1}$.
We have
\begin{equation}\label{eq:tauple}
\tau(p)\le\frac r{n+1}.
\end{equation}
If $y\in\{1,\ldots,u\}$, then $i'_y=i_y$ and using~\eqref{eq:etteq} with $t=\frac{i_y}n$
and~\eqref{eq:tauetge}, we get
\begin{equation}\label{eq:tauewp}
\tau(e_{\frac{i'_y}{n+1}}\wedge p)\ge\frac y{n+1},
\end{equation}
while if $y\in\{u+1,\ldots,r\}$, then $i'_y=i_y+1$, so using~\eqref{eq:ettle}
with $t=\frac{i_y}n$ and that $p\le q$,
we have
\begin{equation}
\et_{\frac{i_y}n}\wedge p\le e_{\frac{i'_y}{n+1}}\wedge p,
\end{equation}
and from~\eqref{eq:tauetge} we get~\eqref{eq:tauewp} also in this case.
In a similar manner, we get
\begin{equation}
\begin{aligned}
\tau(f_{\frac{j'_y}{n+1}}\wedge p)&\ge\frac y{n+1} \\
\tau(g_{\frac{k'_y}{n+1}}\wedge p)&\ge\frac y{n+1}
\end{aligned}
\end{equation}
for all $y\in\{1,\ldots,r\}$.
Thus, $(I',J',K')$ has property P$_{n+1}$.

In the case that $(I,J,K)$ has only property AP$_n$, the same argument applies, except that, given $\eps>0$,
instead of~\eqref{eq:rstauple} we get
\begin{equation}
\frac{n+1}n\tau(p)\le\frac rn+\eps
\end{equation}
and this yields
\begin{equation}
\tau(p)\le\frac r{n+1}+\frac n{n+1}\eps.
\end{equation}
\end{proof}

\begin{remark}\label{rem:red}
Lemma~\ref{lem:TT1} provides a reduction procedure with respect to properties P$_n$ and AP$_n$,
in the following sense.
Let $(I,J,K)\in\Tt^n_r$.
Suppose there are $u,v,w\in\{0,\ldots,r\}$ such that all of the following four statements hold:
\begin{align}
u=r\text{ or }&i_{u+1}-i_u\ge2 \label{eq:idiff} \\
v=r\text{ or }&j_{v+1}-j_v\ge2 \label{eq:jdiff} \\
w=r\text{ or }&k_{w+1}-k_w\ge2 \label{eq:kdiff} \\
i_u+j_v+&k_w\le n-1, \label{eq:ijksumsmall}
\end{align}
where again we set $i_0=j_0=k_0=0$.
Then Lemma~\ref{lem:TT1} applies, and
to verify that $(I,J,K)$ has property P$_n$, respectively, AP$_n$,
it will suffice to show that $(\It,\Jt,\Kt)$ has property P$_{n-1}$, respectively, AP$_{n-1}$, where
\begin{equation}\label{eq:ItJtKt}
\It=(\itil_1,\ldots,\itil_r),\qquad\Jt=(\jt_1,\ldots,\jt_r),\qquad\Kt=(\kt_1,\ldots,\kt_r)
\end{equation}
are given by
\begin{align}
\itil_p&=
\begin{cases}
i_p\,,&1\le p\le u \\
i_p-1,& u<p\le r,
\end{cases} \label{eq:it} \\
\jt_p&=
\begin{cases}
j_p\,,&1\le p\le v \\
j_p-1,& v<p\le r,
\end{cases} \\
\kt_p&=
\begin{cases}
k_p\,,&1\le p\le w \\
k_p-1,& w<p\le r.
\end{cases} \label{eq:kt}
\end{align}
In fact, we will only concern ourselves with this reduction procedure under the additional
hypothesis
\begin{equation}\label{eq:uvwr}
u+v+w=r,
\end{equation}
which is quite natural because it insures that $(I,J,K)\in\Ut^n_r$ implies $(\It,\Jt,\Kt)\in\Ut^{n-1}_r$.
In fact, we will soon
show that $(I,J,K)\in\Tt^n_r$ implies $(\It,\Jt,\Kt)\in\Tt^{n-1}_r$
for this reduction procedure under the additional hypothesis~\eqref{eq:uvwr},
and, even more, that Littlewood--Richardson coefficients are preserved.
\end{remark}

An important part of the solution of Horn's conjecture was to relate Horn's triples $(I,J,K)\in T^n_r$
to Littlewood--Richardson coefficients.
If $I$ is a set of $r$ distinct positive integers, written as in~\eqref{eq:Ielts}, then we let
\begin{equation}\label{eq:rho}
\rho_r(I)=(i_r-r,i_{r-1}-(r-1),\ldots,i_1-1).
\end{equation}
Note that $\rho_r(I)=(\lambda_1,\lambda_2,\ldots,\lambda_r)$ consists of integers satisfying
\begin{equation}\label{eq:lamdec}
\lambda_1\ge\lambda_2\ge\cdots\ge\lambda_r\ge0.
\end{equation}
We let $\Nats_{0,\ge}^r$ denote the set of $r$--tuples $(\lambda_1,\ldots,\lambda_r)$
of integers satisfying~\eqref{eq:lamdec}, and note that $\rho_r$ is a bijection from the set of subsets
of $\Nats$ having cardinality $r$ onto $\Nats_{0,\ge}^r$.
For $n,r\in\Nats$, $n\ge r$, let
\begin{equation}
\Lambda^n_r=\{(\lambda,\mu,\nu)=(\rho_r(I),\rho_r(J),\rho_r(K))\in(\Nats_{0,\ge}^r)^3\mid
(I,J,K)\in T^n_r\},
\end{equation}
where $T^n_r$ is the usual set of Horn triples.
Using Thm.\ 12 of~\cite{F00}, we easily see
\begin{equation}\label{eq:Lambdanr}
\Lambda^n_r=\bigg\{(\lambda,\mu,\nu)\in(\Nats_{0,\ge}^r)^3\bigg|
\;\sum_{p=1}^r(\lambda_p+\mu_p)=\sum_{p=1}^r\nu_p,\quad
\nu_1\le n-r,\quad c_{\lambda,\mu}^\nu\ne0\;\bigg\},
\end{equation}
where $c_{\lambda,\mu}^\nu$ is the Littlewood--Richardson coefficient, which is a nonnegative integer.
(See~\cite{F00} for more about these.)

The map
\begin{equation}\label{eq:bij}
\Phi^n_r:(I,J,K)\mapsto(\rho_r(\sigma_n(I)),\rho_r(\sigma_n(J)),\rho_r(K))
\end{equation}
is an injective map from the set of triples of subsets of $\{1,2,\ldots,n\}$, each with cardinality $r$,
to $(\Nats_{0,\ge}^r)^3$
and restricts to a bijection from $\Tt^n_r$ onto $\Lambda^n_r$.

\begin{defi}\label{def:LRc}
Let $(I,J,K)$ be a triple of subsets of $\{1,\ldots,n\}$, with $|I|=|J|=|K|=r$.
The {\em Littlewood--Richardson coefficient} of $(I,J,K)$, denoted $c^{(n)}(I,J,K)$,
is equal to the Littlewood--Richardson coefficient $c_{\lambda,\mu}^\nu$, where
$(\lambda,\mu,\nu)=\Phi^n_r((I,J,K))$.
\end{defi}

As already remarked, if $(I,J,K)\in\Tt^n_r$ then all the triples
\begin{equation}
(I,K,J),\quad(J,I,K),\quad(J,K,I),\quad(K,I,J),\quad(K,J,I)
\end{equation}
are also in $\Tt^n_r$.
So at least the property $c^{(n)}(I,J,K)>0$ is invariant under permuting the three sets $I$, $J$ and $K$.
We now show that the Littlewood--Richardson ceofficient is itself invariant.
\begin{lemma}\label{lem:cIJK}
The Littlewood--Richardson coefficient $c^{(n)}(I,J,K)$ is invariant under permutation of the three arguments.
\end{lemma}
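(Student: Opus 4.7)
The plan is to express $c^{(n)}(I,J,K)$ as a manifestly symmetric function of $I$, $J$, $K$ by reformulating the Littlewood--Richardson coefficient as a triple tensor-product multiplicity.

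First I would verify how $\sigma_n$ interacts with $\rho_r$. A direct computation shows that for any $r$-subset $I=\{i_1<\cdots<i_r\}$ of $\{1,\ldots,n\}$, the tuple $\sigma_n(I)$ has $p$-th element $n+1-i_{r+1-p}$, so
\begin{equation}
\rho_r(\sigma_n(I))_q=(n-r)-\rho_r(I)_{r+1-q}\qquad(1\le q\le r).
\end{equation}
In other words, if we write $\lambda^\vee$ for the complement of a partition $\lambda\subseteq(n-r)^r$ in the $r\times(n-r)$ rectangle (reversing and subtracting from $n-r$), then $\rho_r(\sigma_n(I))=\rho_r(I)^\vee$, and consequently
\begin{equation}
\Phi^n_r(I,J,K)=(\rho_r(I)^\vee,\;\rho_r(J)^\vee,\;\rho_r(K)).
\end{equation}

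Next I would invoke the well--known identity, valid for all $\lambda,\mu,\nu\subseteq(n-r)^r$ with $|\lambda|+|\mu|=|\nu|$,
\begin{equation}
c_{\lambda,\mu}^{\nu}=c_{\lambda,\mu,\nu^\vee}^{\;(n-r)^r},
\end{equation}
where the right--hand side is the triple Littlewood--Richardson coefficient, i.e.\ the multiplicity of the Schur polynomial $s_{(n-r)^r}$ in $s_\lambda s_\mu s_{\nu^\vee}$ (equivalently, $\dim\mathrm{Hom}_{GL_r}(V_{(n-r)^r},V_\lambda\otimes V_\mu\otimes V_{\nu^\vee})$). This identity is just the standard fact that twisting by the determinant character sends the dual $V_\nu^*$ to $V_{\nu^\vee}$ for partitions inside the $r\times(n-r)$ rectangle. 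Applying it with $\lambda=\rho_r(I)^\vee$, $\mu=\rho_r(J)^\vee$, $\nu=\rho_r(K)$, and using $\rho_r(K)^\vee=\rho_r(\sigma_n(K))$ from Step~1, we get
\begin{equation}
c^{(n)}(I,J,K)=c_{\rho_r(\sigma_n(I)),\,\rho_r(\sigma_n(J)),\,\rho_r(\sigma_n(K))}^{\;(n-r)^r}.
\end{equation}

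Finally, the triple Littlewood--Richardson coefficient $c^{\,\tau}_{\alpha,\beta,\gamma}$ is obviously symmetric in its three lower arguments, since it counts the dimension of a $\mathrm{Hom}$ space whose target is independent of the order of the tensor factors. This immediately gives invariance of $c^{(n)}(I,J,K)$ under all permutations of $I$, $J$, $K$.

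The main technical point is the complementation identity for LR coefficients in Step~2; once that is in hand, the rest is bookkeeping. One could alternatively cite this identity from \cite{F00} (it is part of the folklore surrounding the symmetries of the LR coefficient, together with $c^\nu_{\lambda,\mu}=c^\nu_{\mu,\lambda}$ and the transpose symmetry), or give a direct combinatorial proof via the Littlewood--Richardson rule.
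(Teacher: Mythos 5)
Your proof is correct and takes essentially the same route as the paper: the identification $\rho_r(\sigma_n(I))=\rho_r(I)^\vee$ together with the rectangle/determinant-twist identity $c_{\lambda,\mu}^{\nu}=\dim\Hom_{GL(r,\Cpx)}\bigl({\det}^{n-r},V_\lambda\otimes V_\mu\otimes V_{\nu^\vee}\bigr)$ is exactly the paper's argument, which then concludes from the manifest symmetry of the triple tensor multiplicity. The only difference is that you quote the complementation identity as known, whereas the paper derives it on the spot from Schur's lemma and the one-dimensionality of powers of the determinant representation.
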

\begin{proof}
By definition,
$c^{(n)}(I,J,K)=c_{\lambda,\mu}^{\nu}$ is the number of components of type 
$V_{\nu}$ that one finds in $V_{\lambda}\otimes V_{\mu}$,
where $V_{\lambda},V_{\mu},V_{\nu}$ are irreducible rational representations
of $GL(r,\mathbb{C})$.
In other words, it is 
\begin{equation}
\dim\Hom_{GL(r,\mathbb{C})}(V_{\nu},V_{\lambda}\otimes V_{\mu}).
\end{equation}

Observe that the contragredient representation  of $V_{\nu}$ is the
representation of highest weights $(1-k_1,\ldots ,r-k_r)$. 
Following the representation theory conventions,
we shall denote by
$\bar{V}_{\nu}$ this representation.

The fact that $V_{\nu}$ is irreducible implies by Schur's lemma that
$\bar{V}_{\nu}\otimes V_{\nu}$ contains one and only one copy of the trivial representation 
$\varepsilon$ (of highest weight $(0,0,\ldots ,0)$).

Observe also that the determinant representation is the representation of highest weight
$(1,\ldots ,1)$, and more generally, the power $l$ of the determinant representation 
is the irreducible representation of highest weight $(l,\ldots ,l)$.

The fact that powers of the determinant representation are of dimension one implies
that when tensored with any irreducible representation of highest weight 
$(x_1,\ldots ,x_r)$, they yield an other irreducible representation of highest weight
$(x_1+l,\ldots ,x_r+l)$.

This implies that $\bar{V}\otimes\det^{n-r}$ has highest weight of type
\begin{equation}
(n+1-k_1-r,\ldots , n+1-k_r-r),
\end{equation}
and that 
$\det^{n-r}\otimes \bar{V}_{\nu}\otimes V_{\nu}$ contains one and only one copy 
of the determinant representation $\det^{n-r}$.

We are interested in the dimension of the $GL(r,\mathbb{C})$ - Hom space 
\begin{equation}
\Hom_{GL(r,\mathbb{C})}(V_{\nu},V_{\lambda}\otimes V_{mu}):
\end{equation}
from the above facts it turns out that this dimension is exactly the
same as that of the dimension of
\begin{equation}
\Hom_{GL(r,\Cpx)}({\det}^{n-r},
{\det}^{n-r}\otimes \bar{V}_{\nu}\otimes V_{\lambda}\otimes V_{\mu}).
\end{equation}

Obviously the action by permutation of sets $I,J,K$ in $\tilde{T}_r^n$ 
corresponds to the
permutation of legs of the tensor $V_{\lambda}\otimes V_{\mu}\otimes ({\det}^{n-r}\otimes \bar{V}_{\nu})$.

The fact that the fusion rules of tensor product of groups are abelian implies that
the dimension of the Hom spaces are unchanged, so 
that $c^{(n)}(I,J,K)$ remains unchanged under permutation of indices.
\end{proof}

We now show that the reduction procedure of Remark~\ref{rem:red} preserves Littlewood--Richardson coefficients.

\begin{prop}\label{prop:redLR}
Let $(I,J,K)\in\Tt^n_r$ and suppose there are $u,v,w\in\{0,\ldots,r\}$
such that
\begin{gather}
u+v+w=r \label{eq:uvwra} \\
u=r\text{ or }i_{u+1}-i_u\ge2 \label{eq:idiffa} \\
v=r\text{ or }j_{v+1}-j_v\ge2 \label{eq:jdiffa} \\
w=r\text{ or }k_{w+1}-k_w\ge2 \label{eq:kdiffa} \\
i_u+j_v+k_w\le n-1, \label{eq:ijksumsmalla}
\end{gather}
where we set $i_0=j_0=k_0=0$.
Let $\It,\Jt,\Kt$ be as defined in~\eqref{eq:ItJtKt} and~\eqref{eq:it}--\eqref{eq:kt}.
Then $c^{(n-1)}(\It,\Jt,\Kt)=c^{(n)}(I,J,K)$.
\end{prop}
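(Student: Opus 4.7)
The plan is to translate the reduction into partition language via the bijection $\Phi^n_r$ of~\eqref{eq:bij}, and then prove the resulting identity of Littlewood--Richardson coefficients by a combinatorial argument.

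First, a direct calculation using the formulas $\lambda_p = n - r - i_p + p$ (which is what $\rho_r \circ \sigma_n$ produces) and $\nu_p = k_{r+1-p} - (r+1-p)$ shows that the reduction $(I,J,K) \mapsto (\tilde I, \tilde J, \tilde K)$ corresponds under $\Phi$ to the map on partitions $(\lambda,\mu,\nu) \mapsto (\tilde\lambda,\tilde\mu,\tilde\nu)$ obtained by subtracting $1$ from each of: the first $u$ parts of $\lambda$, the first $v$ parts of $\mu$, and the first $u + v = r - w$ parts of $\nu$ (the last equality using~\eqref{eq:uvwra}). One then verifies that the corner hypotheses~\eqref{eq:idiffa}--\eqref{eq:kdiffa} translate to the strict drops $\lambda_u > \lambda_{u+1}$, $\mu_v > \mu_{v+1}$, and $\nu_{u+v} > \nu_{u+v+1}$ at the relevant indices (whenever these indices are $<r$), ensuring $\tilde\lambda, \tilde\mu, \tilde\nu$ remain weakly decreasing partitions; meanwhile,~\eqref{eq:ijksumsmalla} guarantees both nonnegativity of all parts and $\tilde\nu_1 \le n-1-r$, so $(\tilde\lambda,\tilde\mu,\tilde\nu)$ lies in the partition space appropriate to $\Lambda^{n-1}_r$.

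With this translation in hand, the proposition reduces to the Schur-function identity $c^\nu_{\lambda,\mu} = c^{\tilde\nu}_{\tilde\lambda,\tilde\mu}$ under the corner hypotheses. My approach is to establish this through a combinatorial model, most likely the Knutson--Tao hive model: $c^\nu_{\lambda,\mu}$ counts integer-valued hives on a triangular grid of side $r$ with prescribed boundary $(\lambda,\mu,\nu)$ subject to the rhombus inequalities, and the reduction should correspond to decreasing certain boundary-edge labels by $1$. Small-example inspection in the alternative LR-tableau model suggests that the naive map ``delete the rightmost cells of rows $u+1, \ldots, u+v$ and shift rows $1, \ldots, u$ left by one cell'' does not in general produce an LR tableau of the correct content $\tilde\mu$, since the entries of those rightmost cells are not uniquely determined by the tableau; a tableau-level bijection would therefore require jeu-de-taquin-style rearrangement, which motivates working in the hive model instead.

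The main obstacle---precisely the arduous step flagged in the introduction---is the explicit construction and verification of the hive bijection. This entails a careful analysis of the rhombus inequalities adjacent to the modified boundary edges, using the strict-drop hypotheses $\lambda_u > \lambda_{u+1}$, $\mu_v > \mu_{v+1}$, and $\nu_{u+v} > \nu_{u+v+1}$ in an essential way to guarantee that the boundary modification produces a valid hive on one side of the correspondence and is uniquely reversible on the other. Once the bijection is established, the equality $c^{(n)}(I,J,K) = c^{(n-1)}(\tilde I, \tilde J, \tilde K)$ follows immediately by counting integer hives on both sides.
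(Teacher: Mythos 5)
Your opening step---pushing the reduction through $\Phi^n_r$ so that it becomes ``subtract $1$ from the first $u$ parts of $\lambda$, the first $v$ parts of $\mu$, and the first $u+v$ parts of $\nu$''---agrees with the paper, and your checks that the corner hypotheses keep $\tilde\lambda,\tilde\mu,\tilde\nu$ weakly decreasing and that \eqref{eq:ijksumsmalla} gives nonnegativity and $\tilde\nu_1\le n-1-r$ are fine. But the heart of the proposition, the identity $c^\nu_{\lambda,\mu}=c^{\tilde\nu}_{\tilde\lambda,\tilde\mu}$, is not proved: you only announce an intended Knutson--Tao hive bijection and yourself flag its construction and verification as the outstanding obstacle, so what you have is a plan rather than a proof. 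Moreover, the plan as stated cannot succeed, because you propose to use only the strict-drop hypotheses $\lambda_u>\lambda_{u+1}$, $\mu_v>\mu_{v+1}$, $\nu_{u+v}>\nu_{u+v+1}$ ``in an essential way,'' relegating \eqref{eq:ijksumsmalla} to bookkeeping. Those strict drops alone do not imply the identity: take $r=3$, $\lambda=\mu=(2,1,0)$, $\nu=(3,2,1)$, $u=v=1$, $w=1$ (so $u+v=2$); all three strict drops hold, yet $c^\nu_{\lambda,\mu}=2$ while $c^{\tilde\nu}_{\tilde\lambda,\tilde\mu}=c^{(2,1,1)}_{(1,1,0),(1,1,0)}=1$. (In Horn terms this configuration violates $i_u+j_v+k_w\le n-1$ for every $n$ with $\nu_1\le n-r$.) So no bijection of hives, tableaux, or anything else can be built from the strict drops alone; the translated content of \eqref{eq:ijksumsmalla}---strict inequalities such as $\nu_{a+1}<\lambda_a$, $\nu_1<\lambda_a+\mu_b$, $\nu_1+\nu_{a+b+1}<\lambda_a+\mu_b$, the last two also using $\nu_1\le n-r$ from membership in $\Tt^n_r$---must enter the combinatorial argument itself, and your proposal never brings it to bear.

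For comparison, the paper stays in Fulton's filling model: writing $f^k_\ell$ for the number of entries $k$ in row $\ell$, the map sends a filling of $\tilde\nu\backslash\tilde\lambda$ of content $\tilde\mu$ to the one with $f^k_{a+k}$ increased by $1$ for $1\le k\le b$ (where $a=u$, $b=v$) and all other entries unchanged; injectivity is a routine check that the defining equalities and inequalities are preserved, and essentially all the work is in surjectivity, i.e.\ showing that every filling of $\nu\backslash\lambda$ of content $\mu$ has $f^k_{a+k}>0$ together with two further strict inequalities, which is exactly where \eqref{eq:ijksumsmalla} (in the forms displayed above) is used, case by case in $a,b$. Note also that your stated reason for abandoning a tableau-level bijection---that the entries of the deleted boxes are not determined---dissolves once the inverse is described as removing one entry equal to $k$ from row $a+k$ for each $k\le b$, rather than removing the physically rightmost boxes; the real issue is not ambiguity of the inverse but proving such entries exist, and that is precisely the surjectivity argument your proposal leaves open.
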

\begin{proof}
Note that $\It$, $\Jt$ and $\Kt$ are subsets of $\{1,\ldots,n-1\}$.
Let
\begin{align}
(\lambda,\mu,\nu)&=\Phi^n_r(I,J,K) \\
(\lambdat,\mut,\nut)&=\Phi^{n-1}_r(\It,\Jt,\Kt). \label{eq:lmuIJKt}
\end{align}
Then for $p\in\{1,\ldots,r\}$ we have
\begin{align}
\lambda_p&=n-r-i_p+p \\
\mu_p&=n-r-j_p+p \\
\nu_p&=k_{r+1-p}-(r+1-p).
\end{align}
Let $a=u$, $b=v$ and $c=r-w$.
Then~\eqref{eq:uvwra} gives $c=a+b$.
From~\eqref{eq:it}--\eqref{eq:kt} and~\eqref{eq:lmuIJKt},
we get
\begin{align}
\lambdat_p=
\begin{cases}
\lambda_p-1\,,&1\le p\le a, \\
\lambda_p\,,&a<p\le r
\end{cases} \label{eq:lamt} \\
\mut_p=
\begin{cases}
\mu_p-1\,,&1\le p\le b, \\
\mu_p\,,&b<p\le r
\end{cases} \\
\nut_p=
\begin{cases}
\nu_p-1\,,&1\le p\le c, \\
\nu_p\,,&c<p\le r. \label{eq:nut}
\end{cases}
\end{align}
We must show
\begin{equation}\label{eq:ctil}
c_{\lambdat,\mut}^\nut=c_{\lambda,\mu}^\nu\,.
\end{equation}

Since $c_{\lambda,\mu}^\nu=c_{\mu,\lambda}^\nu$
(see~\cite{F00} or, indeed, Lemma~\ref{lem:cIJK}), and since the statement of the lemma
is invariant under interchanging the roles of $\lambda$ and $\mu$ when we also interchange $a$ and $b$,
it follows that if we prove the lemma in some given case $a=a_0$ and $b=b_0$, then we may conclude that it also
holds in the case $a=b_0$ and $b=a_0$.

The Littlewood--Richardson coefficient $c_{\lambda,\mu}^\nu$ is equal to the
number of fillings of $\nu\backslash\lambda$ according to $\mu$, as described on page 221 of Fulton's
article~\cite{F00}.
Thus, if we let $f_\ell^k$ denote the number of times $k$ appears in the $\ell$th row, then the fillings of
$\nu\backslash\lambda$ according to $\mu$ are the choices of nonnegative integers
$(f_\ell^k)_{1\le k\le\ell}$ such that the following hold:
\begin{alignat}{2}
\lambda_\ell+\sum_{k=1}^\ell f_\ell^k&=\nu_\ell&&(1\le\ell\le r)  \label{e1} \\
\sum_{\ell=k}^r f_\ell^k&=\mu_k&&(1\le k\le r)  \label{e2} \\
\lambda_{\ell+1}+\sum_{k=1}^{p+1} f_{\ell+1}^k&\le\lambda_\ell+\sum_{k=1}^p f_\ell^k\qquad&&(0\le p<\ell< r)  \label{e3} \\
\sum_{\ell=k+1}^{p+1} f_\ell^{k+1}&\le\sum_{\ell=k}^p f_\ell^k&&(1\le k\le p< r).  \label{e4}
\end{alignat}
Indeed,~\eqref{e2} is the condition Fulton lists as~(iii), \eqref{e3} is equivalent to Fulton's~(ii),
and~\eqref{e4} is equivalent to Fulton's~(iv).

Suppose $(\ft_\ell^k)_{1\le k\le\ell\le r}$ is a filling of $\nut\backslash\lambdat$ according to $\mut$
and let
\begin{equation}\label{eq:fsub}
f^k_\ell=
\begin{cases}
\ft^k_\ell+1,&\text{if }1\le k\le b\text{ and }\ell=k+a \\
\ft^k_\ell,&\text{otherwise.}
\end{cases}
\end{equation}
We will show that the map 
\begin{equation}\label{eq:fbij}
(\ft^k_\ell)_{1\le k\le\ell\le r}\mapsto(f^k_\ell)_{1\le k\le\ell\le r}
\end{equation}
is a bijection from the set of fillings of $\nut\backslash\lambdat$ according to $\mut$
onto the set of fillings of $\nu\backslash\lambda$ according to $\mu$.
It is straightforward to show that the ``tilde'' version of
each of the equalities and inequalities~\eqref{e1}--\eqref{e4} (i.e., where
each $\lambda$, $\mu$, $\nu$ and $f^k_\ell$ is replaced by 
$\lambdat$, $\mut$, $\nut$ and $\ft^k_\ell$, respectively)
implies the ``non--tilde'' version of the same.
Here we give further information about these implications:
\begin{align*}
\text{\eqref{e1}}_{11}&\quad1\le\ell\le c \\
\text{\eqref{e1}}_{00}&\quad c<\ell\le r \\
\text{\eqref{e2}}_{11}&\quad 1\le k\le b \displaybreak[2] \\
\text{\eqref{e2}}_{00}&\quad b< k\le r  \displaybreak[2] \\
\text{\eqref{e3}}_{11}&\quad 1\le \ell\le a  \displaybreak[2] \\
\text{\eqref{e3}}_{11}&\quad a<\ell<c,\,p\ge\ell-a  \displaybreak[2] \\
\text{\eqref{e3}}_{00}&\quad a<\ell\le c,\,p<\ell-a  \displaybreak[2] \\
\text{\eqref{e3}}_{01}&\quad \ell=c<r,\,p\ge\ell-a=b  \displaybreak[2] \\
\text{\eqref{e3}}_{00}&\quad c<\ell<r  \displaybreak[2] \\
\text{\eqref{e4}}_{11}&\quad 1\le k<b,\,p\ge k+a  \displaybreak[2] \\
\text{\eqref{e4}}_{00}&\quad 1\le k\le b,\,p<k+a \\
\text{\eqref{e4}}_{01}&\quad k=b,\,p\ge k+a=c \\
\text{\eqref{e4}}_{00}&\quad b<k<r.
\end{align*}
The subscripts above indicate by how much the left-- and right--hand--sides
of the corresponding equations are incremented when changing from $\lambdat$,
$\mut$, $\nut$ and $\ft^k_\ell$ to $\lambda$, $\mu$, $\nu$ and $f^k_\ell$, respectively.
Thus, for example, the line containing \eqref{e4}$_{01}$ indicates that
when $k=b$ and $p\ge k+a$ and when we pass from
\begin{equation}
\sum_{\ell=k+1}^{p+1}\ft_\ell^{k+1}\le\sum_{\ell=k}^p\ft_\ell^k  
\end{equation}
to the inequality~\eqref{e4} by substituting $f^k_\ell$ for $\ft^k_\ell$, the value of the
right--hand--side increases
by $1$ while the value of the left--hand--side remains unchanged.
The fact that the equalities and inequalities all remain valid when making these substitutions
shows that the map~\eqref{eq:fbij} with $f^k_\ell$ defined by~\eqref{eq:fsub} is an injection from
the set of fillings of $\nut\backslash\lambdat$ according to $\mut$ into the set of fillings
of $\nu\backslash\lambda$ according to $\mu$.

To show that this map is onto is the same as showing
that whenever $(f^k_\ell)_{1\le k\le\ell\le r}$ is a filling
of $\nu\backslash\lambda$ accordiing to $\mu$, then we have
\begin{equation}\label{e6}
f^k_{a+k}>0\qquad(k\in\{1,\ldots,b\}),
\end{equation}
and if $c<r$, then
\begin{equation}\label{e7}
\lambda_{c+1}+\sum_{k=1}^{p+1}f_{c+1}^k<\lambda_c+\sum_{k=1}^pf_c^k\qquad(p\in\{b,b+1,\ldots,c-1\})
\end{equation}
and if $b>0$ and $c<r$, then
\begin{equation}\label{e8}
\sum_{\ell=b+1}^{p+1}f_\ell^{b+1}<\sum_{\ell=b}^pf_\ell^b\qquad(p\in\{c,c+1,\ldots,r-1\}),
\end{equation}
where we see~\eqref{e6} from the definition~\eqref{eq:fsub} and we see~\eqref{e7} and~\eqref{e8}
from the lines with~\eqref{e3}$_{01}$ and~\eqref{e4}$_{01}$, above.
For enough
values of $a$ and $b$ to prove the lemma,
we will use~\eqref{eq:ijksumsmalla} as well as~\eqref{e1}--\eqref{e4}
to show that the inequalities~\eqref{e6}, \eqref{e7} and~\eqref{e8} hold.

\begin{case}\label{c0} $a=b=0$.
\end{case}
Then $(\lambdat,\mut,\nut)=(\lambda,\mu,\nu)$ and~\eqref{eq:ctil} holds trivially.

\begin{case}\label{c1} $b=0$, $1\le a\le r$.
\end{case}
If $a=r$, then there is nothing to check, so assume $a<r$.
We have $a=c\in\{1,\ldots,r-1\}$ and~\eqref{eq:ijksumsmalla} becomes
\begin{equation}\label{eq:6c1}
\nu_{a+1}<\lambda_a\,,
\end{equation}
while $\ft^k_\ell=f^k_\ell$ for all $k$ and $\ell$.
It will suffice to show that~\eqref{eq:6c1} implies
\begin{equation}
\lambda_{a+1}+\sum_{k=1}^{p+1}f_{a+1}^k<\lambda_a+\sum_{k=1}^pf_a^k\qquad(p\in\{0,1,\ldots,a-1\}).
\end{equation}
But we have
\begin{equation}
\lambda_{a+1}+\sum_{k=1}^{p+1}f_{a+1}^k\le\lambda_{a+1}+\sum_{k=1}^{a+1}f_{a+1}^k=\nu_{a+1}<\lambda_a
\le\lambda_a+\sum_{k=1}^pf_a^k\,,
\end{equation}
and Case~\ref{c1} is proved.

\begin{case}\label{c2}
$1\le b<r$ and $a=r-b$.
\end{case}
Then $c=r$.
From~\eqref{eq:ijksumsmalla} we get $n-r<\lambda_a+\mu_b$, so $\nu_1<\lambda_a+\mu_b$ and therefore,
using~\eqref{e1} and~\eqref{e2}, we have
\begin{equation}\label{eq:nlmc2}
\lambda_1+f_1^1<\lambda_a+f^b_b+f^b_{b+1}+\cdots+f^b_r\,.
\end{equation}
We must only verify that~\eqref{e6} holds.
Suppose, for contradiction, that
\begin{equation}\label{eq:f0}
f^{k'}_{a+k'}=0
\end{equation}
for some $k'\in\{1,\ldots,b\}$.
Then we get
\begin{align}
f^b_b+f^b_{b+1}+\cdots+f^b_r&\le f^{b-1}_{b-1}+f^{b-1}_b+\cdots+f^{b-1}_{r-1} \label{eq:fb-1} \\
&\le f^{b-2}_{b-2}+f^{b-2}_{b-1}+\cdots+f^{b-2}_{r-2} \\
&\le\cdots\notag \\
&\le f^{k'}_{k'}+f^{k'}_{k'+1}+\cdots+f^{k'}_{a+k'} \label{eq:fk'} \\
&=f^{k'}_{k'}+f^{k'}_{k'+1}+\cdots+f^{k'}_{a+k'-1} \label{eq:fk'=} \\
&\le f^{k'-1}_{k'-1}+f^{k'-1}_{k'}+\cdots+f^{k'-1}_{a+k'-2} \label{eq:fk'-1} \\
&\le\cdots\notag \\
&\le f^1_1+f^1_2+\cdots+f^1_a\,, \label{eq:f1}
\end{align}
where in~\eqref{eq:fb-1}--\eqref{eq:fk'} we have used~\eqref{e4} with $k=b-1$ and $p=r-1$,
then with $k=b-2$ and $p=r-2$, successively to $k=k'$ and $p=r-b+k'=a+k'$,
where~\eqref{eq:fk'=} results from~\eqref{eq:f0} 
and where for~\eqref{eq:fk'-1}--\eqref{eq:f1} we used~\eqref{e4} with $k=k'-1$ and $p=a+k'-2$,
then with $k=k'-2$ and $p=a+k'-3$, successively to $k=1$ and $p=a$.
But using~\eqref{e3} with $p=1$ and, successively, $\ell=a-1,\,\ell=a-2,\ldots,\ell=1$, we have
\begin{align}
\lambda_a+f^1_a+f^1_{a-1}+\cdots+f^1_2&\le\lambda_{a-1}+f^1_{a-1}+f^1_{a-2}+\cdots+f^1_2  \label{eq:lama} \\
&\le\cdots\notag \\
&\le\lambda_2+f^1_2 \\
&\le\lambda_1\,, \label{eq:lam1}
\end{align}
which together with~\eqref{eq:fb-1}--\eqref{eq:f1} gives
\begin{equation}
\lambda_a+f^b_b+f^b_{b+1}+\cdots+f^b_r\le\lambda_1+f^1_1\,.
\end{equation}
Combining all of this with~\eqref{eq:nlmc2}, we get
\begin{equation}
\lambda_1+f^1_1<\lambda_1+f^1_1\,,
\end{equation}
a contradiction.
Thus, Case~\ref{c2} is proved.

\begin{case}\label{c3}
$1\le a\le b$ and $a+b<r$.
\end{case}
Then~\eqref{eq:ijksumsmalla} yields $\nu_1+\nu_{a+b+1}<\lambda_a+\mu_b$, or
\begin{equation}\label{eq:nlmc3}
\lambda_1+f^1_1+\lambda_{a+b+1}+f^1_{a+b+1}+f^2_{a+b+1}+\cdots+f^{a+b+1}_{a+b+1}
<\lambda_a+f^b_b+f^b_{b+1}+\cdots+f^b_r\,.  
\end{equation}
We now show that~\eqref{e6} must hold.
Supposing for contradiction that we have $f^{k'}_{k'+a}=0$ for some $k'\in\{1,\ldots,b\}$ and
arguing as we did in~\eqref{eq:fb-1}--\eqref{eq:f1}, we get
\begin{align}
f^b_b+f^b_{b+1}+\cdots+f^b_{b+a}&\le f^{b-1}_{b-1}+f^{b-1}_b+\cdots+f^{b-1}_{b+a-1} \\
&\le\cdots\notag \\
&\le f^{k'}_{k'}+f^{k'}_{k'+1}+\cdots+f^{k'}_{k'+a} \\
&=f^{k'}_{k'}+f^{k'}_{k'+1}+\cdots+f^{k'}_{k'+a-1} \\
&\le f^{k'-1}_{k'-1}+f^{k'-1}_{k'}+\cdots+f^{k'-1}_{k'+a-2} \\
&\le\cdots\notag \\
&\le f^1_1+f^1_2+\cdots+f^1_a\,.
\end{align}
Using this in~\eqref{eq:nlmc3}, we get
\begin{multline}\label{eq:lama+b+1<lama}
\lambda_1+f^1_1+\lambda_{a+b+1}+f^1_{a+b+1}+f^2_{a+b+1}+\cdots+f^{a+b+1}_{a+b+1} \\
<
\begin{aligned}[t]
&\lambda_a+f_a^1+f_{a-1}^1+\cdots+f_2^1+f_1^1 \\
&+f^b_{a+b+1}+f^b_{a+b+2}+\cdots+f^b_r\,.
\end{aligned}
\end{multline}
Using~\eqref{eq:lama}--\eqref{eq:lam1} in~\eqref{eq:lama+b+1<lama} yields
\begin{equation}\label{eq:ab1}
\lambda_{a+b+1}+f^1_{a+b+1}+f^2_{a+b+1}+\cdots+f^{a+b+1}_{a+b+1}<f^b_{a+b+1}+f^b_{a+b+2}+\cdots+f^b_r\,.
\end{equation}
Adding $\lambda_r+f^1_r+f^2_r+\cdots+f^{b-1}_r$ to the right--hand--side of~\eqref{eq:ab1}
and using~\eqref{e3} with $p=b-1$ and, successively, $\ell=r-1,\,\ell=r-2,\ldots,\ell=a+b+2$, we get
\begin{align}
\lambda_{a+b+1}+f^1_{a+b+1}&+f^2_{a+b+1}+\cdots+f^{a+b+1}_{a+b+1} \label{eq:lab1} \\
&<(\lambda_r+f^1_r+f^2_r+\cdots+f^{b-1}_r)+f^b_r+f^b_{r-1}+\cdots+f^b_{a+b+1} \\
&\le(\lambda_{r-1}+f^1_{r-1}+f^2_{r-1}\cdots+f^{b-1}_{r-1})+f^b_{r-1}+f^b_{r-2}+\cdots f^b_{a+b+1} \\
&\le\cdots\notag \\
&\le\lambda_{a+b+1}+f^1_{a+b+1}+f^2_{a+b+1}+\cdots+f^b_{a+b+1}\,. \label{eq:lab5}
\end{align}
{}From this, we get
\begin{equation}
f^{b+1}_{a+b+1}+f^{b+2}_{a+b+1}+\cdots+f^{a+b+1}_{a+b+1}<0,
\end{equation}
which is a contradiction.
Thus,~\eqref{e6} is proved.

We now show that~\eqref{e7} holds.
If it fails for some $p=p'\in\{b,b+1,\ldots,b+a-1\}$, then we must have
\begin{equation}
\lambda_{a+b+1}+f^1_{a+b+1}+f^2_{a+b+1}+\cdots+f^{p'+1}_{a+b+1}
=\lambda_{a+b}+f^1_{a+b}+f^2_{a+b}+\cdots+f^{p'}_{a+b}
\end{equation}
and then from~\eqref{eq:nlmc3} we get
\begin{equation}\label{eq:nmlc3e7}
\begin{aligned}[b]
\lambda_1+f^1_1&+(\lambda_{a+b}+f^1_{a+b}+f^2_{a+b}+\cdots+f^{p'}_{a+b}) \\
&+(f^{p'+2}_{a+b+1}+f^{p'+3}_{a+b+1}+\cdots+f^{a+b+1}_{a+b+1})
\end{aligned}
<\lambda_a+f^b_b+f^b_{b+1}+\cdots+f^b_r\,.
\end{equation}
Again using~\eqref{e4} in the familiar way, we obtain
\begin{align}
f^b_b+f^b_{b+1}+\cdots f^b_{b+a-1}&\le f^{b-1}_{b-1}+f^{b-1}_b+\cdots+f^{b-1}_{b+a-2} \\
&\le\cdots\notag \\
&\le f^1_1+f^1_2+\cdots+f^1_a\,.
\end{align}
With~\eqref{eq:nmlc3e7}, this yields
\begin{multline}\label{eq:rev2}
\lambda_1+f^1_1+(\lambda_{a+b}+f^1_{a+b}+f^2_{a+b}+\cdots+f^{p'}_{a+b})
+(f^{p'+2}_{a+b+1}+f^{p'+3}_{a+b+1}+\cdots+f^{a+b+1}_{a+b+1}) \\
<(\lambda_a+f^1_a+f^1_{a-1}+\cdots+f^1_2)+f^1_1
+(f^b_{a+b}+f^b_{a+b+1}+\cdots+f^b_r).
\end{multline}
Using~\eqref{eq:lama}--\eqref{eq:lam1} in~\eqref{eq:rev2},
we get
\begin{multline}\label{eq:rev3}
(\lambda_{a+b}+f^1_{a+b}+f^2_{a+b}+\cdots+f^{p'}_{a+b})
+(f^{p'+2}_{a+b+1}+f^{p'+3}_{a+b+1}+\cdots+f^{a+b+1}_{a+b+1}) \\
<f^b_{b+a}+f^b_{b+a+1}+\cdots+f^b_r\,.
\end{multline}
Ading $\lambda_r+f^1_r+\cdots+f^{b-1}_r$ to the right--hand--side of~\eqref{eq:rev3}
and using~\eqref{e3} with $p=b-1$ and,
successively, $\ell=r-1,\,\ell=r-2,\ldots,\ell=a+b$, we get
\begin{multline}\label{eq:rev4}
(\lambda_{a+b}+f^1_{a+b}+f^2_{a+b}+\cdots+f^{p'}_{a+b})
+(f^{p'+2}_{a+b+1}+f^{p'+3}_{a+b+1}+\cdots+f^{a+b+1}_{a+b+1}) \\
<\lambda_{a+b}+f^1_{a+b}+f^2_{a+b}+\cdots+f^b_{a+b}\,.
\end{multline}
Thus, we get
\begin{equation}
(f^{b+1}_{a+b}+f^{b+2}_{a+b}+\cdots+f^{p'}_{a+b})
+(f^{p'+2}_{a+b+1}+f^{p'+3}_{a+b+1}+\cdots+f^{a+b+1}_{a+b+1})<0,
\end{equation}
which is a contradiction,
and~\eqref{e7} is proved.

Finally, we show that~\eqref{e8} holds.
If it fails for some $p=p'\in\{a+b,a+b+1,\ldots,r-1\}$, then we have
\begin{equation}
f^{b+1}_{b+1}+f^{b+1}_{b+2}+\cdots+f^{b+1}_{p'+1}=f^b_b+f^b_{b+1}+\cdots+f^b_{p'}\,.  
\end{equation}
{}From this and~\eqref{eq:nlmc3}, we have
\begin{equation}\label{eq:nlmc38}
\begin{aligned}
\lambda_1+f^1_1&+\lambda_{a+b+1}+f^1_{a+b+1}+f^2_{a+b+1}+\cdots+f^{a+b+1}_{a+b+1} \\
&<\lambda_a
+(f^{b+1}_{b+1}+f^{b+1}_{b+2}+\cdots+f^{b+1}_{p'+1})
+(f^b_{p'+1}+f^b_{p'+2}+\cdots+f^b_r). 
\end{aligned}
\end{equation}
Arguing as before, we have 
\begin{align}
f^{b+1}_{b+1}+f^{b+1}_{b+2}+\cdots+f^{b+1}_{b+a}
&\le f^b_b+f^b_{b+1}+\cdots+f^b_{b+a-1} \\
&\le\cdots\notag \\
&\le f^1_1+f^1_2+\cdots+f^1_a\,.
\end{align}
Using this in~\eqref{eq:nlmc38}, we get
\begin{multline}
\lambda_1+f^1_1+\lambda_{a+b+1}+f^1_{a+b+1}+f^2_{a+b+1}+\cdots+f^{a+b+1}_{a+b+1} \\
<
\begin{aligned}[t]
&(\lambda_a+f_a^1+f_{a-1}^1+\cdots+f_2^1)+f^1_1 \\
&+(f^{b+1}_{a+b+1}+f^{b+1}_{a+b+2}+\cdots+f^{b+1}_{p'+1})
+(f^b_{p'+1}+f^b_{p'+2}+\cdots+f^b_r).
\end{aligned}
\end{multline}
Using this and~\eqref{eq:lama}--\eqref{eq:lam1}, we get
\begin{multline}\label{eq:rev5}
\lambda_{a+b+1}+f^1_{a+b+1}+f^2_{a+b+1}+\cdots+f^{a+b+1}_{a+b+1} \\
<
(f^{b+1}_{a+b+1}+f^{b+1}_{a+b+2}+\cdots+f^{b+1}_{p'+1})
+(f^b_{p'+1}+f^b_{p'+2}+\cdots+f^b_r).
\end{multline}
adding $\lambda_r+f^1_r+f^2+r+\cdots+f^{b-1}_r$ to the right--hand--side of~\eqref{eq:rev5} and using~\eqref{e3}
with $p=b-1$ and, successively, $\ell=r-1$, $\ell=r-2$,\ldots, $\ell=p'+1$, we get
\begin{multline}\label{eq:rev6}
\lambda_{a+b+1}+f^1_{a+b+1}+f^2_{a+b+1}+\cdots+f^{a+b+1}_{a+b+1} \\
<
(f^{b+1}_{a+b+1}+f^{b+1}_{a+b+2}+\cdots+f^{b+1}_{p'+1})
+(\lambda_{p'+1}+f^1_{p'+1}+f^2_{p'+1}+\cdots+f^b_{p'+1}) \\
=\lambda_{p'+1}+f^1_{p'+1}+f^2_{p'+1}+\cdots+f^{b+1}_{p'+1}
+(f^{b+1}_{p'}+f^{b+1}_{p'-1}+\cdots+f^{b+1}_{a+b+1}).
\end{multline}
Now using~\eqref{e3}
with $p=b$ and, successively, $\ell=p'$, $\ell=p'-1$,\ldots, $\ell=a+b+1$, we get
\begin{equation}\label{eq:rev7}
\lambda_{a+b+1}+f^1_{a+b+1}+f^2_{a+b+1}+\cdots+f^{a+b+1}_{a+b+1}
<
\lambda_{a+b+1}+f^1_{a+b+1}+f^2_{a+b+1}+\cdots+f^{b+1}_{a+b+1}\,.
\end{equation}
This implies 
\begin{equation}
  \label{eq:5}
f^{b+2}_{a+b+1}+f^{b+3}_{a+b+1}+\cdots+f^{a+b+1}_{a+b+1} <0,
\end{equation}
which is a contradiction.
Thus, Case~\ref{c3} is proved.

We have now proved enough cases so that, if we also consider also 
the cases obtained from them by interchanging $a$ and $b$,
then the lemma is proved.
\end{proof}

\begin{defi}\label{def:red}
Let $(I,J,K)\in\Tt^n_r$.
We say $(I,J,K)$ is {\em TT--reducible}, (or simply reducible)
if the method of reduction described in Remark~\ref{rem:red}
can be performed, namely, if there are $u,v,w\in\{0,\ldots,r\}$ satisfying $u+v+w=r$ and such
that~\eqref{eq:idiff}--\eqref{eq:ijksumsmall} hold, (where we take $i_0=j_0=k_0=0$).
Naturally enough, if $(I,J,K)$ is not TT--reducible, then we may say it is {\em TT--irreducible}
(or simply irreducible).
\end{defi}

\begin{lemma}\label{lem:irjrkr}
Let $n\ge r\ge2$ be integers.
If $(I,J,K)\in\Tt^n_r$ is irreducible, then $i_r=j_r=k_r=n$.
\end{lemma}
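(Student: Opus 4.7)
The plan is to prove the contrapositive, exploiting the permutation symmetry of $\tilde T^n_r$ under the six permutations of $(I,J,K)$ (noted just before Lemma~\ref{lem:cIJK}). Suppose $(I,J,K)\in\tilde T^n_r$ has one of $i_r,j_r,k_r$ strictly less than $n$; by this symmetry, we may assume $k_r\le n-1$. We must exhibit $u,v,w\in\{0,1,\dots,r\}$ with $u+v+w=r$ satisfying the four conditions~\eqref{eq:idiff}--\eqref{eq:ijksumsmall} of Remark~\ref{rem:red}, contradicting irreducibility.

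The first candidate is $(u,v,w)=(0,0,r)$: the sum condition, \eqref{eq:kdiff}, and \eqref{eq:ijksumsmall} are immediate, the last because $k_r\le n-1$. If also $i_1\ge2$ and $j_1\ge2$, then \eqref{eq:idiff} and \eqref{eq:jdiff} hold and we are done. Otherwise, by the $I\leftrightarrow J$ component of the symmetry, we may assume $i_1=1$, and introduce
\[
u^{*}=\max\bigl\{\,t\in\{1,\dots,r\}\;:\;\{1,2,\dots,t\}\subseteq I\,\bigr\},
\]
so $u^{*}\ge1$, $i_{u^{*}}=u^{*}$, and $u^{*}\in U$ (either $u^{*}=r$, or $i_{u^{*}+1}-i_{u^{*}}\ge2$ by maximality). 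The refined candidate $(u^{*},0,r-u^{*})$ satisfies \eqref{eq:idiff} by construction, and \eqref{eq:ijksumsmall} follows from the telescoping estimate
\[
u^{*}+0+k_{r-u^{*}}\;\le\;u^{*}+(k_r-u^{*})\;=\;k_r\;\le\;n-1,
\]
using the strict monotonicity bound $k_r-k_{r-u^{*}}\ge u^{*}$. What may still obstruct this candidate are \eqref{eq:jdiff} (which needs $j_1\ge2$) and \eqref{eq:kdiff} (which needs either $u^{*}=r$ together with $k_1\ge2$, or $u^{*}<r$ together with $k_{r-u^{*}+1}-k_{r-u^{*}}\ge2$).

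When these too fail I would continue the same refinement: if $j_1=1$ I introduce $v^{*}$ in an exactly analogous way from $J$, and if $k_1=1$ then $w^{*}$ from $K$, and test candidates of the form $(u^{*},v^{*},r-u^{*}-v^{*})$ or $(u^{*},v^{*},w^{*})$; in each case the same telescoping identity shows that \eqref{eq:ijksumsmall} reduces to $k_r\le n-1$ (or to a variant thereof), so the only remaining obstacle is always to land the third coordinate inside $W$ while keeping the sum equal to $r$. The crux—and the main obstacle—is the residual case analysis when all three of $i_1,j_1,k_1$ equal $1$ simultaneously, so $u^{*},v^{*},w^{*}\ge1$ but $(u^{*},v^{*},w^{*})$ need not sum to $r$ and the $W$–condition may fail after any natural perturbation. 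In those pathological configurations I would rule out the apparent obstruction by appealing to the recursive Horn inequalities~\eqref{eq:Tt} for $(I,J,K)\in\tilde T^n_r$ applied to a small-$p$ sub-triple $(F,G,H)\in\tilde T^r_p$ (for instance, $p=1$ with $(F,G,H)=(\{r\},\{r\},\{1\})$, which gives $i_r+j_r+k_1\ge 2n+1$); combined with $k_r\le n-1$ and the rigid structural identities $i_\ell=\ell$ for $\ell\le u^{*}$ (and similarly for $J$, $K$), these inequalities will contradict the existence of the supposedly problematic $(I,J,K)$, completing the contrapositive.
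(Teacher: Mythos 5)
There is a genuine gap: your proof never actually handles the case it sets up. After the correct first candidate $(u,v,w)=(0,0,r)$ (with, say, $k_r\le n-1$ by symmetry), everything hinges on the possibility that $i_1=1$ or $j_1=1$, and from that point on your argument is only a sketch — the $u^{*},v^{*},w^{*}$ refinement is not carried to completion, and you yourself identify the ``residual case analysis'' as the unresolved crux. The missing observation is that this case cannot occur at all: membership in $\Tt^n_r$ already forces $i_1\ge2$ and $j_1\ge2$ once $k_r\le n-1$. Indeed, by \eqref{eq:Ut} the triples $(\{1\},\{r\},\{r\})$ and $(\{r\},\{1\},\{r\})$ lie in $\Tt^r_1=\Ut^r_1$ (since $1+r+r=2r+1$), so the defining inequalities \eqref{eq:Tt} with $p=1$ give
\begin{equation*}
i_1+j_r+k_r\ge 2n+1,\qquad i_r+j_1+k_r\ge 2n+1,
\end{equation*}
and with $i_r,j_r\le n$ and $k_r\le n-1$ these yield $i_1\ge 2$ and $j_1\ge 2$. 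Hence \eqref{eq:idiff} and \eqref{eq:jdiff} hold for the very first candidate $(0,0,r)$, the triple is reducible, and no refinement with $u^{*}$ is needed. This is exactly how the paper argues (with the roles of $I$ and $K$ exchanged).

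Note also that the one concrete inequality you propose to invoke in the problematic case, namely $(F,G,H)=(\{r\},\{r\},\{1\})$ giving $i_r+j_r+k_1\ge 2n+1$, is the wrong one: since $i_r,j_r\le n$ it yields only $k_1\ge 1$, which is vacuous and cannot rule out any configuration. The useful $p=1$ sub-triples are precisely those that place the singleton $\{1\}$ in a coordinate \emph{other} than the one whose top element is assumed $\le n-1$, so that the deficit $k_r\le n-1$ transfers into the lower bound on $i_1$ or $j_1$. With that substitution your contrapositive closes immediately and the entire second half of your proposal (the $u^{*},v^{*},w^{*}$ machinery) can be discarded.
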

\begin{proof}
Suppose $(I,J,K)\in\Tt^n_r$ and $i_r<n$.
We will show that $(I,J,K)$ is reducible.
In view of the symmetry of $\Tt^n_r$, this will suffice to prove the lemma.

Let $u=r$ and $v=w=0$.
Then~\eqref{eq:idiff} and~\eqref{eq:ijksumsmall} both hold.
To show that $(I,J,K)$ is reducible, it will suffice to show $j_1\ge2$, for then~\eqref{eq:jdiff}
will hold and by symmetry also~\eqref{eq:kdiff} will hold.
Inspecting~\eqref{eq:Ut}, we must have $(r,1,r)\in\Ut^r_1=\Tt^r_1$.
Considering~\eqref{eq:Tt} and taking $p=1$, we must have
$i_r+j_1+k_r\ge2n+1$, so
\begin{equation}
j_1\ge(2n+1)-i_r-k_r\ge(2n+1)-(n-1)-n=2.
\end{equation}
\end{proof}

\begin{lemma}\label{lem:gaps}
Suppose $(I,J,K)\in\Tt^n_r$ satisfies $i_r=j_r=k_r=n$ and that there are $u,v,w\in\{0,1,\ldots,r\}$
such that $u+v+w=r$ and~\eqref{eq:ijksumsmall} holds, namely, $i_u+j_v+k_w\le n-1$.
Then~\eqref{eq:idiff}--\eqref{eq:kdiff} must hold.
\end{lemma}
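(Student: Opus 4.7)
The plan is to establish the key inequality
\[
i_{u+1}+j_v+k_w \ge n+1,
\]
which, combined with the hypothesis $i_u+j_v+k_w\le n-1$, immediately yields $i_{u+1}-i_u\ge 2$; the gap inequalities \eqref{eq:jdiff} and \eqref{eq:kdiff} then follow by the evident invariance of the setup (both the ambient condition $(I,J,K)\in\Tt^n_r$ and the hypothesis itself) under simultaneous permutation of $(I,J,K)$ with $(u,v,w)$.

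Before proving the key inequality, I would argue that each of $u,v,w$ is strictly less than $r$. Indeed, if, for instance, $u=r$, then $v=w=0$, so with the convention $j_0=k_0=0$ we would have $i_u+j_v+k_w=i_r=n$, contradicting $\le n-1$. Thus \eqref{eq:idiff}--\eqref{eq:kdiff} reduce to the nontrivial alternatives $i_{u+1}-i_u\ge 2$, etc., and it suffices to treat the $i$-gap.

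The key inequality will come from applying a Horn inequality from the recursive definition \eqref{eq:Tt} of $\Tt^n_r$, with the triple $(F,G,H)$ chosen according to whether $v$ or $w$ vanishes. In the generic situation $v\ge 1$, $w\ge 1$, and $u+1<r$, I would invoke
\[
(F,G,H)=(\{u+1,r\},\{v,r\},\{w,r\})\in\Tt^r_2,
\]
whose membership one checks directly: the element sum is $(u+1+r)+(v+r)+(w+r)=u+v+w+3r+1=4r+1$, matching the equality defining $\Ut^r_2$, and the three $p=1$ recursive inequalities against $\Tt^2_1=\{(1,2,2),(2,1,2),(2,2,1)\}$ reduce to the bounds $u+2r+1\ge 2r+1$, $v+2r\ge 2r+1$, $w+2r\ge 2r+1$, all of which hold trivially from $v,w\ge 1$. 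Substituting $i_r=j_r=k_r=n$ into the resulting Horn inequality
\[
i_{u+1}+i_r+j_v+j_r+k_w+k_r\ge\frac{2(4n-2+3)}{2}=4n+1
\]
yields the key inequality. The boundary situations reduce to a $p=1$ triple: if $v=0$ (so $j_v=0$), then $(u+1,r,w)\in\Tt^r_1$ has sum $2r+1$ and gives $i_{u+1}+k_w\ge n+1$; the case $w=0$ is symmetric. When $u=r-1$, we have $i_{u+1}=n$ and $v+w=1$, so one of $j_v,k_w$ equals $j_1$ or $k_1$ and is $\ge 1$, whence $n+j_v+k_w\ge n+1$ holds.

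The principal technical burden is verifying that the exhibited triples actually lie in the correct $\Tt^r_p$; beyond that routine sum-and-recursion check, together with enumerating the boundary configurations above, I anticipate no deeper obstacle.
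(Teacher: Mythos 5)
Your proof is correct and follows essentially the same route as the paper's: both derive $i_{u+1}+j_v+k_w\ge n+1$ by applying the defining inequalities~\eqref{eq:Tt} of $\Tt^n_r$ to the test triple $(\{u+1,r\},\{v,r\},\{w,r\})\in\Tt^r_2$ when $v,w\ge1$ (and to a singleton triple in $\Tt^r_1$ when $v$ or $w$ vanishes), then subtract the hypothesis $i_u+j_v+k_w\le n-1$ and invoke permutation symmetry for the remaining two gaps. Your explicit checks of membership in $\Tt^r_2$ and $\Tt^r_1$, and the separate $u=r-1$ boundary case, are details the paper leaves implicit (the latter is automatic, since $v,w\ge1$ forces $u\le r-2$), but the argument is the same.
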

\begin{proof}
It will suffice to show that~\eqref{eq:idiff} holds.
{}From~\eqref{eq:ijksumsmall}, we have $u\le r-1$.
\begin{case}\label{c1:gaps}
$v\ne0$ and $w=0$.  
\end{case}
Then $(\{u+1\},\{v\},\{r\})\in\Tt^r_1$, and from~\eqref{eq:Tt} we must have
\begin{equation}
i_{u+1}+j_v+k_r\ge2n+1,
\end{equation}
which yields
\begin{equation}
i_{u+1}-i_u=(i_{u+1}+j_v)-(i_u+j_v)\ge(n+1)-(n-1)=2.
\end{equation}
and~\eqref{eq:idiff} holds.

\begin{case}\label{c2:gaps}
$v\ne0$ and $w\ne0$.  
\end{case}
Then $(\{u+1,r\},\{v,r\},\{w,r\})\in\Tt^r_2$, and from~\eqref{eq:Tt} we must have
\begin{equation}
i_{u+1}+i_r+j_v+j_r+k_w+k_r\ge4n+1,
\end{equation}
which yields
\begin{equation}
i_{u+1}-i_u=(i_{u+1}+j_v+k_w)-(i_u+j_v+k_w)\ge(n+1)-(n-1)=2.
\end{equation}
and~\eqref{eq:idiff} holds.

The other case, $v=0$ and $w\ne0$, follows from symmetry considerations.
\end{proof}

The above two lemmas imply the following.
\begin{prop}\label{prop:irred}
Let $(I,J,K)\in\Tt^n_r$.
Then $(I,J,K)$ is irreducible if and only if
$u,v,w\in\{0,1,\ldots,r\}$ and $u+v+w=r$ implies
\begin{equation}\label{eq:ijksumbig}
i_u+j_v+k_w\ge n,
\end{equation}
where we set $i_0=j_0=k_0=0$.
\end{prop}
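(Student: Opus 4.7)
The plan is to prove both directions of this biconditional as immediate consequences of Lemmas~\ref{lem:irjrkr} and~\ref{lem:gaps}, which together with Definition~\ref{def:red} essentially contain the content needed. The forward direction (irreducibility implies the inequality) is the more substantive one, while the reverse is a one-line contrapositive.

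For the reverse direction, I would suppose $(I,J,K)$ is reducible and derive a violation of the hypothesized inequality. By Definition~\ref{def:red}, reducibility means that there exist $u,v,w\in\{0,\ldots,r\}$ with $u+v+w=r$ such that \eqref{eq:idiff}--\eqref{eq:kdiff} hold together with \eqref{eq:ijksumsmall}, namely $i_u+j_v+k_w\le n-1$. This directly contradicts $i_u+j_v+k_w\ge n$, so the contrapositive is immediate.

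For the forward direction, assume $(I,J,K)\in\Tt^n_r$ is irreducible. If $r=1$, then Lemma~\ref{lem:irjrkr} is not directly applicable (it assumes $r\ge2$), but one may treat this case separately: the only triples $(u,v,w)$ with $u+v+w=1$ are the three where one of them equals $1$ and the others equal $0$. The required inequality $i_1+j_0+k_0\ge n$ (and its permutations) amounts to $i_1\ge n$, $j_1\ge n$, and $k_1\ge n$, which by $i_1,j_1,k_1\le n$ and inspection of $\Ut^n_1$ via \eqref{eq:Ut} forces $i_1=j_1=k_1=n$; this is indeed the only irreducible triple in $\Tt^n_1$. For $r\ge2$, I apply Lemma~\ref{lem:irjrkr} to conclude $i_r=j_r=k_r=n$. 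Now suppose for contradiction that there exist $u,v,w\in\{0,\ldots,r\}$ with $u+v+w=r$ and $i_u+j_v+k_w\le n-1$. Lemma~\ref{lem:gaps} applies and yields that \eqref{eq:idiff}--\eqref{eq:kdiff} hold. But then all four conditions of Definition~\ref{def:red} are satisfied, contradicting irreducibility.

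I do not anticipate any real obstacle here: the proposition is essentially just the assembly of the two preceding lemmas. The only slightly delicate point is handling $r=1$, which either needs a separate line of argument or a convention that makes Lemma~\ref{lem:irjrkr}'s conclusion hold vacuously in that case; this is easily accommodated.
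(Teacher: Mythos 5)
Your proposal is correct and follows essentially the paper's own route: the paper simply assembles Lemma~\ref{lem:irjrkr} and Lemma~\ref{lem:gaps} with Definition~\ref{def:red}, exactly as you do, the reverse implication being immediate from condition~\eqref{eq:ijksumsmall}. Your separate treatment of $r=1$ (where one checks via the equation defining $\Ut^n_1$ that $i_1\le n-1$ forces $j_1,k_1\ge2$, so the gap conditions hold automatically) is extra care that the paper's one-line justification glosses over, and it is easily completed as you indicate.
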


The next result describes the irreducible elements of $\Tt^n_3$ for
arbitrary $n\ge3$, which are particularly nice.
Compare this to the first part of the proof of Theorem~1 of~\cite{TT74}.

\begin{prop}\label{prop:LRmin123}
Let $1\le r\le n$ be integers and let $(I,J,K)\in\Tt^n_r$.
\begin{itemize}
\item[(i)] If $r=1$, then $(I,J,K)$ is irreducible if and only if $n=1$ and
\begin{equation}
(I,J,K)=(\{1\},\{1\},\{1\}).
\end{equation}
\item[(ii)] If $r=2$, then $(I,J,K)$ is irreducible if and only if $n=2$ and
\begin{equation}
(I,J,K)=(\{1,2\},\{1,2\},\{1,2\}).
\end{equation}
\item[(iii)] If $r=3$, then $(I,J,K)$ is irreducible if and only if
\begin{equation}\label{eq:Ttn3irr}
(I,J,K)=(\{m,m+\ell,n\},\{m,m+\ell,n\},\{m,m+\ell,n\})
\end{equation}
for some integers $\ell$ and $m$ satisfying $1\le\ell\le m$ and $2m+\ell=n$.
\end{itemize}
\end{prop}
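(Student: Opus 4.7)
My approach relies almost entirely on Proposition~\ref{prop:irred}, which recasts irreducibility as the family of inequalities $i_u+j_v+k_w\ge n$ for every $(u,v,w)$ with $u+v+w=r$, together with Lemma~\ref{lem:irjrkr} (for $r\ge 2$) and the sum identity built into $\Ut^n_r$. The converse directions will be quick direct checks.

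For part~(i), with $r=1$, the only non-trivial instances of Proposition~\ref{prop:irred} are $(u,v,w)=(1,0,0),(0,1,0),(0,0,1)$, which force $i_1=j_1=k_1=n$. The defining identity $i_1+j_1+k_1=2n+1$ of $\Ut^n_1$ then yields $3n=2n+1$, hence $n=1$. For part~(ii), Lemma~\ref{lem:irjrkr} gives $i_2=j_2=k_2=n$, so the $\Ut^n_2$ identity reduces to $i_1+j_1+k_1=n+1$. Proposition~\ref{prop:irred} applied to the three triples $(1,1,0),(1,0,1),(0,1,1)$ gives $i_1+j_1,\,i_1+k_1,\,j_1+k_1\ge n$; summing these three inequalities produces $2(n+1)\ge 3n$, which forces $n\le 2$, hence $n=2$ (since $n\ge r$) and then $i_1=j_1=k_1=1$.

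Part~(iii) is the main content. Again Lemma~\ref{lem:irjrkr} gives $i_3=j_3=k_3=n$, so the $\Ut^n_3$ identity becomes $(i_1+j_1+k_1)+(i_2+j_2+k_2)=3n$. I would introduce the abbreviations $a_1=i_1,\,a_2=j_1,\,a_3=k_1$ and $b_1=i_2,\,b_2=j_2,\,b_3=k_2$. Proposition~\ref{prop:irred} applied to $(1,1,1)$ gives $a_1+a_2+a_3\ge n$, and applied to the six permutations of $(2,1,0)$ gives $b_i+a_j\ge n$ for all $i\ne j$. The symmetry-forcing trick is to set $c_i=n-b_i$; the six inequalities become $c_i\le a_j$ for $i\ne j$, while $\sum c_i=\sum a_i$. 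Ordering $a_1\le a_2\le a_3$ without loss of generality, I get
\begin{equation}
c_1+c_2+c_3\le a_2+2a_1,
\end{equation}
and combining with $\sum c_i=\sum a_i$ yields $a_1\ge a_3$, hence $a_1=a_2=a_3=:m$ and equality in each bound $c_i\le\min_{j\ne i}a_j$, which forces $b_1=b_2=b_3=n-m$. Setting $\ell=n-2m$, the strict inequality $b_i>a_i$ gives $\ell\ge 1$, and the $(1,1,1)$ inequality $3m\ge n$ translates to $\ell\le m$.

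For the converse direction of~(iii), given $(I,J,K)$ of the form~\eqref{eq:Ttn3irr} with $1\le\ell\le m$ and $2m+\ell=n$, I would first verify membership in $\Tt^n_3$: the sum $3(m+(m+\ell)+n)=6n$ confirms $\Ut^n_3$, and the Horn inequalities for $p\in\{1,2\}$ reduce, after inspection of the small sets $\Tt^3_1=\Ut^3_1$ and $\Tt^3_2$, to $m\ge 1$ and $\ell\ge 1$, which hold. Irreducibility then follows from Proposition~\ref{prop:irred} by inspection: for $(1,1,1)$, $3m\ge n\iff\ell\le m$; for each permutation of $(2,1,0)$, $b_i+a_j=2m+\ell=n$; the remaining cases $(3,0,0)$, etc., are trivial. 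The main obstacle is the symmetry-forcing step in the forward direction of~(iii), since one must extract the three equalities $a_1=a_2=a_3$ and the three equalities $b_i=n-a_i$ from a mix of one linear equation and seven linear inequalities; the substitution $c_i=n-b_i$ followed by the ordered bound $\sum c_i\le a_2+2a_1$ is what makes this transparent.
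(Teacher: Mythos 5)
Your proof is correct and follows essentially the same route as the paper's: Proposition~\ref{prop:irred} (with Lemma~\ref{lem:irjrkr}) to force $i_r=j_r=k_r=n$, the $\Ut^n_r$ sum identity, and the resulting system of inequalities, with the same direct checks of membership in $\Tt^n_3$ and of irreducibility for the triples in~\eqref{eq:Ttn3irr}. The only divergence is cosmetic: in part (iii) the paper sums the six inequalities $i_2+j_1\ge n,\ldots,j_1+k_2\ge n$ and compares with the sum identity to force equality in all of them, whereas you reach the same equalities via the substitution $c_i=n-b_i$ and an ordering argument.
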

\begin{proof}
Part~(i) follows immediately from Proposition~\ref{prop:irred}.

Part~(ii) follows easily from Proposition~\ref{prop:irred}
because if $(I,J,K)\in\Tt^n_2$ irreducible, then
\begin{equation}\label{eq:i2j2k2n}
i_2=j_2=k_2=n,
\end{equation}
while we also have
\begin{equation}
i_1+j_1+k_1=n+1
\end{equation}
from~\eqref{eq:Ut} and~\eqref{eq:i2j2k2n} and, again from Proposition~\ref{prop:irred} we get
\begin{align}
i_1+j_1&\ge n \label{eq:i1j1n} \\
i_1+k_1&\ge n \\
j_1+k_1&\ge n. \label{eq:j1k1n}
\end{align}
Adding up~\eqref{eq:i1j1n}--\eqref{eq:j1k1n}, we get
\begin{equation}
2(n+1)=2(i_1+j_1+k_1)\ge3n,
\end{equation}
so $n\le2$.

Now, for part~(iii), suppose $r=3$.
$\Tt^3_1$ consists of the triples $(\{1\},\{3\},\{3\})$ and $(\{2\},\{2\},\{3\})$ and the four
other triples obtained by permutations, while
$\Tt^3_2$ consists of the triples $(\{1,2\},\{2,3\},\{2,3\})$ and $(\{1,3\},\{1,3\},\{2,3\})$
and their permutations.
Thus, $\Tt^n_3$ is the set of triples $(I,J,K)$ satisfying
\begin{gather}
i_1+i_2+i_3+j_1+j_2+j_3+k_1+k_2+k_3=6n \label{eq:Ttn3,4} \\
i_1+i_2+j_2+j_3+k_2+k_3\ge4n+1 \\
i_2+i_3+j_1+j_2+k_2+k_3\ge4n+1 \\
i_2+i_3+j_2+j_3+k_1+k_2\ge4n+1 \\
i_1+i_3+j_1+j_3+k_2+k_3\ge4n+1 \\
i_1+i_3+j_2+j_3+k_1+k_3\ge4n+1 \\
i_2+i_3+j_1+j_3+k_1+k_3\ge4n+1 \label{eq:Ttn3,10} \\
i_1+j_3+k_3\ge2n+1 \\
i_3+j_1+k_3\ge2n+1 \\
i_3+j_3+k_1\ge2n+1 \\
i_2+j_2+k_3\ge2n+1 \label{eq:Ttn3,14} \\
i_2+j_3+k_2\ge2n+1 \\
\;i_3+j_2+k_2\ge2n+1. \label{eq:Ttn3,16}
\end{gather}
One checks that all $(I,J,K)$ of the form~\eqref{eq:Ttn3irr}
belong to $\Tt^n_3$, because~\eqref{eq:Ttn3,4}--\eqref{eq:Ttn3,16} hold,
and are irreducible, because if $u+v+w=3$,
then~\eqref{eq:ijksumbig} holds.

Let $(I,J,K)\in\Tt^n_3$ be irreducible.
Then, by Proposition~\ref{prop:irred}, 
\begin{align}
i_3=j_3=&k_3=n \label{eq:i3j3k3} \\
i_2+j_1&\ge n \label{eq:i2j1big} \\
i_2+k_1&\ge n \\
i_1+j_2&\ge n \\
i_1+k_2&\ge n \\
j_2+k_1&\ge n \\
j_1+k_2&\ge n. \label{eq:j1k2big}
\end{align}
But adding up~\eqref{eq:i2j1big}--\eqref{eq:j1k2big}, we get
\begin{equation}
2(i_1+i_2+j_1+j_2+k_1+k_2)\ge6n,  
\end{equation}
which, in light of~\eqref{eq:Ttn3,4} and~\eqref{eq:i3j3k3} must be an equality.
Thus, all of~\eqref{eq:i2j1big}--\eqref{eq:j1k2big} must be equalities,
and these imply $i_1=j_1=k_1=m$ and $i_2=j_2=k_2=m+\ell$ for some integers $m,\ell\ge1$
satisfying $2m+\ell=n$.
Using~\eqref{eq:ijksumbig} with $u=v=w=1$, we have $3m\ge n$, which implies $\ell\le m$.
\end{proof}

\begin{prop}\label{prop:LRr3min}
Let $(I,J,K)\in\Tt^n_3$ be the irreducibe Horn triple in~\eqref{eq:Ttn3irr}.
Then the Littlewood--Richardson coefficient $c^{(n)}(I,J,K)$ is equal to $\ell$.
\end{prop}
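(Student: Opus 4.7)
The plan is to compute the Littlewood--Richardson coefficient $c^\nu_{\lambda,\mu}$, where $(\lambda,\mu,\nu)=\Phi^n_3(I,J,K)$, by directly enumerating the fillings of $\nu\backslash\lambda$ according to $\mu$ via the counting equations \eqref{e1}--\eqref{e4} already set up for the proof of Proposition~\ref{prop:redLR}. First I would carry out the bookkeeping: since $n=2m+\ell$, we have $\sigma_n(\{m,m+\ell,n\})=\{1,m+1,m+\ell+1\}$, so applying $\rho_3$ gives
\begin{equation}
\lambda=\mu=(m+\ell-2,\,m-1,\,0),\qquad \nu=(2m+\ell-3,\,m+\ell-2,\,m-1).
\end{equation}
Thus the skew shape $\nu\backslash\lambda$ has row lengths $m-1$, $\ell-1$ and $m-1$ from top to bottom, and the filling uses $m+\ell-2$ copies of $1$, $m-1$ copies of $2$, and no $3$'s.

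Next, I would unpack the constraints. From \eqref{e1} and $f^k_\ell=0$ when $k>\ell$, the top row is forced: $f^1_1=m-1$. From \eqref{e2} with $k=3$, we get $f^3_3=\mu_3=0$. The remaining unknowns $f^1_2,f^2_2,f^1_3,f^2_3$ satisfy the linear relations
\begin{equation}
f^1_2+f^2_2=\ell-1,\qquad f^1_2+f^1_3=\ell-1,\qquad f^2_2+f^2_3=m-1,\qquad f^1_3+f^2_3=m-1,
\end{equation}
which reduce to a one--parameter family: setting $t=f^1_2$, we obtain $f^2_2=f^1_3=\ell-1-t$ and $f^2_3=m-\ell+t$. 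Nonnegativity of these four quantities requires $0\le t\le\ell-1$, using the hypothesis $\ell\le m$ to ensure $f^2_3\ge0$; this produces exactly $\ell$ candidate fillings.

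Finally, I would verify that each such candidate actually satisfies the column--strict inequalities \eqref{e3} and the ballot inequalities \eqref{e4}. There are only a handful of cases since $r=3$: for \eqref{e3} the nontrivial checks are $\lambda_2+f^1_2\le\lambda_1$, $\lambda_3+f^1_3\le\lambda_2$, and $\lambda_3+f^1_3+f^2_3\le\lambda_2+f^1_2$, which reduce respectively to $t\le\ell-1$, $\ell-1-t\le\ell-1$, and $0\le 2t$; for \eqref{e4} the only nontrivial cases are $f^2_2\le f^1_1$ and $f^2_2+f^2_3\le f^1_1+f^1_2$, which reduce to $\ell-1-t\le m-1$ and $m-1\le m-1+t$. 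All of these hold for $t\in\{0,1,\ldots,\ell-1\}$, confirming $c^{(n)}(I,J,K)=\ell$.

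I do not anticipate a serious obstacle here: the main point is the observation that the symmetric shape of $(I,J,K)$ forces the LR counting problem to collapse to a single free parameter, and the constraint $\ell\le m$ coming from irreducibility (Proposition~\ref{prop:LRmin123}(iii)) is precisely what is needed to guarantee all $\ell$ parameter values yield honest LR tableaux. The most error--prone step is merely the bookkeeping in identifying $(\lambda,\mu,\nu)$ from $(I,J,K)$ and in checking that none of the column--strict or ballot conditions cuts down the parameter range.
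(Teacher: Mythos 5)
Your proposal is correct and takes essentially the same route as the paper: it identifies $\lambda=\mu=(m+\ell-2,m-1,0)$, $\nu=(2m+\ell-3,m+\ell-2,m-1)$, reduces the Littlewood--Richardson filling count to the single free parameter $t=f_2^1\in\{0,1,\ldots,\ell-1\}$ via the same linear relations, and verifies the column--strictness and ballot conditions. The only quibble is a harmless arithmetic misstatement: the check $\lambda_3+f^1_3\le\lambda_2$ reduces to $\ell-1-t\le m-1$ (not $\ell-1-t\le\ell-1$), which still holds because $\ell\le m$, so the conclusion $c^{(n)}(I,J,K)=\ell$ stands.
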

\begin{proof}
Let $(\lambda,\mu,\nu)=\Phi^n_r(I,J,K)$.
We have
\begin{equation}\label{eq:lmnvals}
\lambda=\mu=(m+\ell-2,m-1,0),\qquad\nu=(2m+\ell-3,m+\ell-2,m-1)
\end{equation}
and $c^{(n)}(I,J,K)=c_{\lambda,\mu}^\nu$ equals the number of fillings of $\nu\backslash\lambda$ according to $\mu$,
or, equivalently, the number of choices of nonnegative integers $f_1^1,f_2^1,f_2^2,f_3^1,f_3^2,f_3^3$ such that
the following hold:
\begin{align}
\lambda_1+f_1^1&=\nu_1 \label{eq:r3e1} \\
\lambda_2+f_2^1+f_2^2&=\nu_2 \label{eq:r3e2} \\
\lambda_3+f_3^1+f_3^2+f_3^3&=\nu_3 \label{eq:r3e3} \\
f_1^1+f_2^1+f_3^1&=\mu_1 \label{eq:r3e4} \\
f_2^2+f_3^2&=\mu_2 \label{eq:r3e5} \\
f_3^3&=\mu_3 \label{eq:r3e6} \\
\lambda_2+f_2^1&\le\lambda_1 \\
\lambda_3+f_3^1&\le\lambda_2 \\
\lambda_3+f_3^1+f_3^2&\le\lambda_2+f_2^1 \\
f_2^2&\le f_1^1 \\
f_2^2+f_3^2&\le f_1^1+f_2^1 \\
f_3^3&\le f_2^2\,. \label{eq:r3e12}
\end{align}
Using also the values specified in~\eqref{eq:lmnvals}, from~\eqref{eq:r3e1} and, respectively,~\eqref{eq:r3e6} we get
\begin{align}
f_1^1&=m-1 \\
f_3^3&=0.
\end{align}
A typical filling is pictured in Figure~\ref{fig:r=3fillings}.
\begin{figure}[t]
\caption{A typical filling of $\nu\backslash\lambda$ according to $\mu$.}
\label{fig:r=3fillings}
\begin{picture}(280,70)(0,0)
\put(0,60){\line(1,0){280}}
\put(0,40){\line(1,0){280}}
\put(0,20){\line(1,0){180}}
\put(0,0){\line(1,0){100}}
\put(0,0){\line(0,1){60}}
\put(60,0){\line(0,1){20}}
\put(100,0){\line(0,1){40}}
\put(120,20){\line(0,1){20}}
\put(180,20){\line(0,1){40}}
\put(280,40){\line(0,1){20}}
\put(50,47){$\lambda_1=m+\ell-2$}
\put(197,47){$f_1^1=m-1$}
\put(17,27){$\lambda_2=m-1$}
\put(105,27){$f_2^1$}
\put(142,27){$f_2^2$}
\put(25,7){$f_3^1$}
\put(72,7){$f_3^2$}
\end{picture}
\end{figure}
{}From~\eqref{eq:r3e2}--\eqref{eq:r3e5}, we get
\begin{align}
f_2^1+f_2^2&=\ell-1 \\
f_3^1+f_3^2&=m-1 \\
f_2^1+f_3^1&=\ell-1 \\
f_2^2+f_3^2&=m-1,
\end{align}
which yield 
\begin{gather}
f_3^1=f_2^2=\ell-1-f_2^1 \\
f_3^2=m-\ell+f_2^1\,.
\end{gather}
The filling is determined by the choice of $f_2^1\in\{0,1,\ldots,\ell-1\}$
and each such choice leads to all the equalities and inequalities in~\eqref{eq:r3e1}--\eqref{eq:r3e12}
being satisfied.
So we have $c_{\lambda,\mu}^\nu=\ell$.
\end{proof}

Let us say that a Horn triple $(I,J,K)\in\Tt^n_r$ is {\em LR--minimal} (or simply minimal)
if the Littlewood--Richardson
coefficient $c^{(n)}(I,J,K)$ is equal to $1$.
It is known (see Theorem~13 of~\cite{F00}) that the set of Horn inequalities corresponding
to LR--minimal Horn triples determines all of the other Horn inequalities.
For the purpose of showing that all Horn inequalities hold in all finite von Neumann algebras, it will
suffice to verify that all the LR--minimal Horn inequalities hold in all finite von Neumann algebras.

\begin{cor}\label{cor:LRminir3}
Let $n\ge3$ be an integer.
Then $\Tt^n_3$ has an element that is an
LR--minimal and irreducible Horn triple $(I,J,K)$ if and only if $n=2m+1$ is odd, and then
the unique such triple is
\begin{equation}\label{eq:LRminir3}
(I,J,K)=(\{m,m+1,n\},\{m,m+1,n\},\{m,m+1,n\}).
\end{equation}
\end{cor}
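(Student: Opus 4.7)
The plan is to derive the corollary as an immediate consequence of Propositions~\ref{prop:LRmin123}(iii) and~\ref{prop:LRr3min}, with essentially no further work.

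First I would invoke Proposition~\ref{prop:LRmin123}(iii), which classifies all irreducible triples in $\Tt^n_3$: they are precisely the triples
\begin{equation*}
(I,J,K)=(\{m,m+\ell,n\},\{m,m+\ell,n\},\{m,m+\ell,n\})
\end{equation*}
for integers $\ell,m$ with $1\le\ell\le m$ and $2m+\ell=n$. Thus every candidate for an irreducible LR-minimal triple must be of this form.

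Next I would apply Proposition~\ref{prop:LRr3min}, which evaluates the Littlewood--Richardson coefficient of such a triple as $c^{(n)}(I,J,K)=\ell$. LR-minimality means $c^{(n)}(I,J,K)=1$, which forces $\ell=1$. Substituting $\ell=1$ into the constraint $2m+\ell=n$ gives $n=2m+1$, so $n$ must be odd, and then $m=(n-1)/2$ is uniquely determined. The resulting triple is precisely~\eqref{eq:LRminir3}.

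Conversely, when $n=2m+1$ with $m=(n-1)/2\ge1$ (which holds for all $n\ge3$ odd), the triple~\eqref{eq:LRminir3} has the form required by Proposition~\ref{prop:LRmin123}(iii) with $\ell=1\le m$, so it is irreducible, and by Proposition~\ref{prop:LRr3min} its Littlewood--Richardson coefficient equals $1$, making it LR-minimal. There is no main obstacle here; the entire proof is a direct assembly of the two preceding propositions, so I would present it in just a few lines without any new calculation.
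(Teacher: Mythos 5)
Your proposal is correct and matches the paper's (implicit) argument: the corollary is stated without a separate proof precisely because it follows immediately from Proposition~\ref{prop:LRmin123}(iii) together with Proposition~\ref{prop:LRr3min}, exactly as you assemble it. Setting $\ell=1$ in the classification and noting $2m+\ell=n$ forces $n$ odd, with the converse direction checked just as you do.
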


We were unable to find a nice characterization of the LR--minimal and irreducible Horn triples
in $\Tt^n_4$.
However, the complete list of such (up to permutation of $I$, $J$ and $K$) for several values of $n$
is given in Table~\ref{tab:minirred}.
These were found using the Littlewood--Richardson Calculator
package~\cite{Bu} of Anders Skovsted Buch and Maple.
\begin{table}[hb]
\caption{LR--minimal and irreducible triples in $\Tt^n_4$.}
\label{tab:minirred}
\begin{tabular}{r|l}
$n$ & $(I,J,K)$ \\[0.7ex] \hline\hline
$4$ & $(\{1,2,3,4\},\{1,2,3,4\},\{1,2,3,4\})$ \\[0.7ex] \hline
$5$ & $\emptyset$ \\[0.7ex] \hline
$6$ & (\{1,3,4,6\},\{2,3,5,6\},\{2,3,5,6\}),\;(\{2,3,4,6\},\{2,3,4,6\},\{2,3,5,6\}) \\[0.7ex] \hline
$7$ & (\{2,3,5,7\},\{2,4,5,7\},\{3,4,5,7\}),\;(\{2,3,6,7\},\{2,4,5,7\},\{2,4,5,7\}) \\[0.7ex] \hline
$8$ & (\{1,4,5,8\},\{3,4,7,8\},\{3,4,7,8\}),\;(\{2,3,5,8\},\{3,5,6,8\},\{3,5,6,8\}), \\[0.7ex]
    & (\{2,3,6,8\},\{2,5,6,8\},\{3,5,6,8\}),\;(\{2,4,5,8\},\{3,4,6,8\},\{3,4,7,8\}), \\[0.7ex]
    & (\{2,4,5,8\},\{3,4,6,8\},\{3,5,6,8\}),\;(\{3,4,5,8\},\{3,4,5,8\},\{3,4,7,8\}), \\[0.7ex]
    & (\{3,4,5,8\},\{3,4,6,8\},\{3,4,6,8\}) \\[0.7ex] \hline
$9$ & (\{2,3,6,9\},\{3,6,7,9\},\{3,6,7,9\}),\;(\{2,5,6,9\},\{3,4,7,9\},\{3,6,7,9\}), \\[0.7ex]
    & (\{2,5,6,9\},\{3,4,7,9\},\{4,5,7,9\}),\;(\{2,5,6,9\},\{3,4,8,9\},\{3,5,7,9\}), \\[0.7ex]
    & (\{3,4,6,9\},\{3,5,6,9\},\{3,6,7,9\}),\;(\{3,4,6,9\},\{3,5,6,9\},\{4,5,7,9\}), \\[0.7ex]
    & (\{3,4,6,9\},\{3,5,7,9\},\{4,5,6,9\}),\;(\{3,4,7,9\},\{3,5,6,9\},\{4,5,6,9\}), \\[0.7ex]
    & (\{3,5,6,9\},\{3,5,6,9\},\{3,4,8,9\}) \\[0.7ex] \hline
\end{tabular}
\end{table}

\section{Construction of projections}
\label{sec:Constr}

In this section, we exhibit a construct of projections which we use
in combination with results of previous sections to prove that all of the LR--minimal Horn
inequalities corresponding to triples in $\Tt^n_3$ for arbitrary $n$
must hold in all finite von Neumann algebras.

\begin{lemma}\label{lem:projconstr}
Let $\Mcal$ be a finite von Neumann algebra with normal, faithful tracial state $\tau$.
Suppose $0<\beta\le\frac25$ and $e_1,e_2,e_3\in\Mcal$ are projections with
\begin{equation}\label{eq:taueiej}
\tau(e_i)\ge\frac12+\frac\beta4,\qquad(i\in\{1,2,3\}).
\end{equation}
Then there is a projection $p\in\Mcal$ satisfying
$\tau(p)\le\frac32\beta$
and $\tau(p\wedge e_i)\ge\beta$ for all $i\in\{1,2,3\}$.
\end{lemma}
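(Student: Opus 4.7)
The plan is to construct $p$ as a join of subprojections of the pairwise meets $e_i\wedge e_j$. First, by Proposition~\ref{prop:taup} applied to each pair,
\begin{equation}
\tau(e_i\wedge e_j)\ge\tau(e_i)+\tau(e_j)-1\ge\beta/2
\end{equation}
for all $i\ne j$. Set $q:=e_1\wedge e_2\wedge e_3$ and $s:=\tau(q)$, and split into cases.

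If $s\ge\beta$, any subprojection $p\le q$ of trace $\beta$ satisfies $p\wedge e_i=p$, so $\tau(p\wedge e_i)=\beta$ and $\tau(p)=\beta\le 3\beta/2$, settling the lemma. Assume henceforth $s<\beta$. I would construct, for each pair $(i,j)$, a subprojection $r_{ij}\le e_i\wedge e_j$ of trace $\beta/2$, arranged so that $r_{ij}\wedge r_{ik}=0$ whenever $j\ne k$. Granted such $r_{ij}$'s, set $p:=r_{12}\vee r_{13}\vee r_{23}$. Then Proposition~\ref{prop:taup} gives $\tau(r_{ij}\vee r_{ik})=\tau(r_{ij})+\tau(r_{ik})=\beta$, and since $r_{ij}\vee r_{ik}\le p\wedge e_i$, we get $\tau(p\wedge e_i)\ge\beta$. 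The same joint-meet vanishing, applied twice more, yields $\tau(p)\le\sum_{i<j}\tau(r_{ij})=3\beta/2$.

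To obtain the $r_{ij}$'s, I would decompose each as an orthogonal sum $r_{ij}=r_{ij}^q+r_{ij}^o$ with $r_{ij}^q\le q$ and $r_{ij}^o\le(e_i\wedge e_j)\ominus q$. A range-decomposition argument, using that $r_{ij}\wedge r_{ik}\le e_1\wedge e_2\wedge e_3=q$, reduces the compatibility $r_{ij}\wedge r_{ik}=0$ to pairwise orthogonality inside $q$ of the three $q$-pieces $r_{12}^q,r_{13}^q,r_{23}^q$. The minimum trace needed in $r_{ij}^q$ equals $(\beta/2-T_{ij})_+$ where $T_{ij}:=\tau(e_i\wedge e_j)-s=\tau((e_i\wedge e_j)\ominus q)$, so the construction is feasible provided
\begin{equation}\label{eq:budget}
\sum_{1\le i<j\le 3}(\beta/2-T_{ij})_+\le s=\tau(q).
\end{equation}

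The main obstacle is verifying \eqref{eq:budget}. This is where the hypothesis $\beta\le 2/5$ is essential; it is designed to exclude the configurations in which multiple $T_{ij}$ approach their minimum $\beta/2-s$. To check feasibility I would work in the compression $q^\perp\Mcal q^\perp$, where the residual projections $f_i:=e_i-q$ satisfy $\tau(f_i)\ge 1/2+\beta/4-s$ and $f_1\wedge f_2\wedge f_3=0$. Applying Proposition~\ref{prop:taup} to pairs and triples of the $f_i$'s inside the compression yields sharper lower bounds on the $T_{ij}$, which, when combined with $\beta\le 2/5$, ensure the total trace deficit on the left of \eqref{eq:budget} fits inside $q$.
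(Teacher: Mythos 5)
There is a genuine gap: your feasibility condition $\sum_{1\le i<j\le 3}\bigl(\tfrac\beta2-T_{ij}\bigr)_+\le\tau(q)$ does not follow from the hypotheses, and in fact your whole scheme of building $p$ from subprojections $r_{ij}\le e_i\wedge e_j$ of trace $\tfrac\beta2$ with $r_{ij}\wedge r_{ik}=0$ can be outright impossible. Take any $s$ with $0<s<\tfrac\beta2$, let $q_0$ be a projection of trace $s$, and put $e_i=q_0\oplus f_i$ where $f_1,f_2,f_3$ are subprojections of $1-q_0$ of trace $\tfrac12+\tfrac\beta4-s$ in generic position (realizable already in a matrix algebra $M_N(\Cpx)$ with suitable rational data: generic subspaces of dimension $m$ in $\Cpx^N$ have pairwise intersections of dimension $2m-N$ and, since $3m-2N<0$ here because $\beta\le\tfrac23$, zero triple intersection). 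Then $\tau(e_i)=\tfrac12+\tfrac\beta4$, $\tau(e_i\wedge e_j)=\tfrac\beta2$ exactly for every pair, and $\tau(e_1\wedge e_2\wedge e_3)=s>0$. Your budget inequality then reads $3s\le s$, which is false; worse, since $\tau$ is faithful and $\tau(e_i\wedge e_j)=\tfrac\beta2$, any $r_{ij}\le e_i\wedge e_j$ of trace $\tfrac\beta2$ must equal $e_i\wedge e_j$, hence $r_{12}\wedge r_{13}\ge q_0\ne0$, so the required disjointness cannot be arranged at all. The hypothesis $\beta\le\tfrac25$ does not exclude this configuration (it rules it out only in the abelian/commuting case, where inclusion--exclusion forces $s\le\tfrac{3\beta}4-\tfrac12<0$; in general position no such constraint holds), so the ``sharper lower bounds on the $T_{ij}$'' you hope for in the compression simply are not there.

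The underlying obstruction is that $p$ cannot in general be found inside $(e_1\wedge e_2)\vee(e_1\wedge e_3)\vee(e_2\wedge e_3)$: in the configuration above that join meets each $e_i$ in trace only $\beta-s<\beta$. This is exactly the hard case of the paper's proof (its Case with $\tau(q_1)+\tau(q_2)\le\beta-\tau(q_0)$, in the notation $q_1=(e_2\wedge e_3)-q_0$, etc.), and the paper resolves it by leaving the pairwise meets: it chooses a projection $f\le e_3-(q_1\vee q_2)-q_0$ lying under $(e_1-q_0)\vee(e_2-q_0)$, and uses the affiliated idempotents $E(e_1-q_0,e_2-q_0)$, $E(e_2-q_0,e_1-q_0)$ of Section~\ref{subsec:idems} (Lemma~\ref{lem:Ei}) to produce $r_1\le e_1-q_0$, $r_2\le e_2-q_0$ with $\tau(r_i)\le\tau(f)$ and $f\le r_1\vee r_2\vee q_3$; adjoining $r_1,r_2$ (and further small pieces of $e_1,e_2$ outside the meets) to $q_0\vee q_1\vee q_2\vee q_3$ yields $p$ with $\tau(p\wedge e_3)\ge\tau(q_0)+\tau(q_1\vee q_2)+\tau(f)\ge\beta$ while keeping $\tau(p)\le\tfrac32\beta$. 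So your Case $s\ge\beta$ and the easy half of your analysis are fine, but some mechanism of this kind for covering a piece of $e_3$ by small pieces of $e_1$ and $e_2$ is unavoidable, and it is missing from your proposal.
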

\begin{proof}
Let $q_0=e_1\wedge e_2\wedge e_3$.
\begin{case}\label{case:1}
$\tau(q_0)\ge\beta$.
\end{case}
To prove the lemma in this case, we simply let $p\le q_0$ be such that $\tau(p)=\beta$.

\smallskip
In the remaining cases, let
\begin{align}
q_1&=(e_2\wedge e_3)-q_0 \\
q_2&=(e_1\wedge e_3)-q_0 \\
q_3&=(e_1\wedge e_2)-q_0.
\end{align}
We clearly have
\begin{equation}
q_i\wedge q_j=(e_i-q_0)\wedge(e_j-q_0)=0,\qquad (i\ne j)
\end{equation}
and, using~\eqref{eq:tpvq} and~\eqref{eq:taueiej},
\begin{equation}\label{eq:tauqi}
\tau(q_0)+\tau(q_i)=\tau(e_j\wedge e_k)\ge\frac\beta2\,,
\end{equation}
where $\{i,j,k\}=\{1,2,3\}$.
We assume, without loss of generality,
\begin{equation}\label{eq:tauqord}
\tau(q_1)\ge\tau(q_2)\ge\tau(q_3).
\end{equation}

\begin{case}\label{case:7}
$\tau(q_0)\le\beta$ and
$\tau(q_1)+\tau(q_2)\ge\beta-\tau(q_0)$
\end{case}
Take projections $q_2'\le q_2$ and $q_3'\le q_3$ such that 
\begin{equation}
\tau(q_i')=\min(\tau(q_i),\frac\beta2-\frac{\tau(q_0)}2),\qquad(i\in\{2,3\})
\end{equation}
and let $q_1'\le q_1$ be such that
\begin{equation}\label{eq:tauq1'q2'}
\tau(q_1')+\tau(q_2')=\beta-\tau(q_0).
\end{equation}
Then $\tau(q_3')\le\tau(q_2')$, we have
\begin{equation}
\begin{aligned}
\tau(q_0+q_2'\vee q_3')&\le\beta \\[0.7ex]
\tau(q_0+q_1'\vee q_3')&\le\beta
\end{aligned}
\end{equation}
and we may, therefore, choose projections 
\begin{equation}
\begin{aligned}
q_4&\le e_1-(q_0+q_2'\vee q_3') \\
q_5&\le e_2-(q_0+q_1'\vee q_3')
\end{aligned}
\end{equation}
so that
\begin{equation}\label{eq:tauq4q5}
\begin{aligned}
\tau(q_4)&=\beta-\tau(q_0)-\tau(q_2')-\tau(q_3') \\
\tau(q_5)&=\beta-\tau(q_0)-\tau(q_1')-\tau(q_3').
\end{aligned}
\end{equation}
Let
\begin{equation}
p=q_0\vee q_1'\vee q_2'\vee q_3'\vee q_4\vee q_5.
\end{equation}
Then
\begin{equation}\label{eq:taup45q3'}
\begin{aligned}
\tau(p)&\le\tau(q_0)+\tau(q_1')+\tau(q_2')+\tau(q_3')+\tau(q_4)+\tau(q_5) \\[0.7ex]
&=2\beta-\tau(q_0)-\tau(q_3').
\end{aligned}
\end{equation}
If $q_3'=q_3$, then using~\eqref{eq:tauqi} we get $\tau(p)\le\frac32\beta$.
On the other hand, if $q_3'\ne q_3$, then $\tau(q_3')=\frac\beta2-\frac{\tau(q_0)}2$
and from~\eqref{eq:taup45q3'} we have
$\tau(p)\le\frac32\beta-\frac12\tau(q_0)\le\frac32\beta$.

But also 
\begin{equation}
\begin{aligned}
p\wedge e_1&\ge q_0+(q_2'\vee q_3')+q_4 \\
p\wedge e_2&\ge q_0+(q_1'\vee q_3')+q_5 \\
p\wedge e_3&\ge q_0+(q_1'\vee q_2'),
\end{aligned}
\end{equation}
which by~\eqref{eq:tauq4q5} and~\eqref{eq:tauq1'q2'}
gives $\tau(p\wedge e_i)\ge\beta$ for all $i\in\{1,2,3\}$.
This finishes the proof in Case~\ref{case:7}.

\begin{case}\label{case:8}
$\tau(q_0)\le\beta$ and $\tau(q_1)+\tau(q_2)\le\beta-\tau(q_0)$.
\end{case}
Using~\eqref{eq:tpvq}, we have
\begin{equation}
\begin{aligned}
\tau((e_1-q_0)\vee(e_2-q_0))&=\tau(e_1)+\tau(e_2)-2\tau(q_0)-\tau(q_3) \\
&\ge1+\frac\beta2-2\tau(q_0)-\tau(q_3)
\end{aligned}
\end{equation}
and, using~\eqref{eq:tpvq} again, we get
\begin{align}
\tau((e_3-(q_1\vee q_2)-q_0)&\wedge\big((e_1-q_0)\vee(e_2-q_0)\big)) \\[0.7ex]
&\ge\begin{aligned}[t]
 &(\tau(e_3)-\tau(q_1)-\tau(q_2)-\tau(q_0)) \\
 &+(1+\frac\beta2-2\tau(q_0)-\tau(q_3))-(1-\tau(q_0))
 \end{aligned} \\[0.7ex]
&\ge\frac12+\frac{3\beta}4-\sum_{i=1}^3\tau(q_i)-2\tau(q_0) \\
&\ge\frac12-\frac\beta4-\tau(q_1)-\tau(q_2)-\tau(q_0) \label{eq:penul} \\
&\ge\beta-\tau(q_1)-\tau(q_2)-\tau(q_0), \label{eq:ul}
\end{align}
where~\eqref{eq:penul} follows because
the assumptions in this case and the ordering~\eqref{eq:tauqord} imply
$\tau(q_3)\le\frac\beta2-\frac{\tau(q_0)}2\le\beta-\tau(q_0)$
and~\eqref{eq:ul} results from $\beta\le\frac25$.

Thus, we may take a projection
\begin{equation}
f\le(e_3-(q_1\vee q_2)-q_0)\wedge\big((e_1-q_0)\vee(e_2-q_0)\big)
\end{equation}
such that
\begin{equation}\label{eq:tauf}
\tau(f)=\beta-\tau(q_1)-\tau(q_2)-\tau(q_0).
\end{equation}
Let us write $E_1=E(e_1-q_0,e_2-q_0)$ and $E_2=E(e_2-q_0,e_1-q_0)$
for the idempotents defined in section~\ref{subsec:idems}.
Let $r_1=E_1^\sharp(f)$ and $r_2=E_2^\sharp(f)$.
By Lemma~\ref{lem:Ei}, we have $r_i\le e_i-q_0$ and $\tau(r_i)\le\tau(f)$ (for $i=1,2$)
and
\begin{equation}\label{eq:fr12q3}
f\le r_1\vee r_2\vee q_3.
\end{equation}
Choose any projections
\begin{equation}
\begin{aligned}
s_1&\le e_1-q_0-(r_1\vee q_2\vee q_3) \\
s_2&\le e_2-q_0-(r_2\vee q_1\vee q_3)
\end{aligned}
\end{equation}
such that
\begin{equation}\label{eq:tauq45}
\begin{aligned}
\tau(s_1)&=\beta-\tau(q_0)-\tau(r_1\vee q_2\vee q_3) \\[0.7ex]
\tau(s_2)&=\beta-\tau(q_0)-\tau(r_2\vee q_1\vee q_3).
\end{aligned}
\end{equation}
This is possible because
\begin{equation}
\tau(r_2\vee q_1\vee q_3)\le\tau(f)+\tau(q_1)+\tau(q_3)
=\beta-\tau(q_0)+\tau(q_3)-\tau(q_2)\le\beta-\tau(q_0)
\end{equation}
and, similarly, $\tau(r_1\vee q_2\vee q_3)\le\beta-\tau(q_0)$.
Let
\begin{equation}
p=q_0\vee q_1\vee q_2\vee q_3\vee r_1\vee r_2\vee s_1\vee s_2.
\end{equation}
We have
\begin{equation}
\tau(q_2\vee q_3\vee r_1\vee s_1)=\tau(s_1)+\tau(r_1\vee q_2\vee q_3)=\beta-\tau(q_0),
\end{equation}
so
\begin{equation}
\tau(q_0\vee q_1\vee q_2\vee q_3\vee r_1\vee s_1)\le\beta+\tau(q_1)
\end{equation}
and
\begin{equation}
\begin{aligned}
\tau(p)&\le\tau(q_0\vee q_1\vee q_2\vee q_3\vee r_1\vee s_1\vee s_2)+\tau(r_2)-\tau(r_2\wedge(q_1\vee q_3)) \\[0.7ex]
&\le\tau(q_0\vee q_1\vee q_2\vee q_3\vee r_1\vee s_1)+\tau(r_2)+\tau(s_2)
 -\tau(r_2\wedge(q_1\vee q_3)) \\[0.7ex]
&\le2\beta-\tau(q_0)+\tau(q_1)+\tau(r_2)-\tau(r_2\vee q_1\vee q_3)-\tau(r_2\wedge(q_1\vee q_3)) \\[0.7ex]
&=2\beta-\tau(q_0)+\tau(q_1)-\tau(q_1\vee q_3) \\[0.7ex]
&=2\beta-\tau(q_0)-\tau(q_3)\le\frac32\beta,
\end{aligned}
\end{equation}
where for the last inequality we used~\eqref{eq:tauqi}.
On the other hand, we have
\begin{equation}
\begin{aligned}
p\wedge e_1&\ge q_0\vee q_2\vee q_3\vee r_1\vee s_1=q_0+(q_2\vee q_3\vee r_1)+s_1 \\
p\wedge e_2&\ge q_0\vee q_1\vee q_3\vee r_2\vee s_2=q_0+(q_1\vee q_3\vee r_2)+s_2\,,
\end{aligned}
\end{equation}
so from~\eqref{eq:tauq45} we get $\tau(p\wedge e_i)\ge\beta$ for $i=1,2$.
Using~\eqref{eq:fr12q3}, we have
\begin{equation}
p\wedge e_3\ge q_0\vee q_1\vee q_2\vee f=q_0+(q_1\vee q_2)+f,
\end{equation}
so from~\eqref{eq:tauf} we have $\tau(p\wedge e_3)\ge\beta$.
This finishes the proof in Case~\ref{case:8}, and the lemma is proved.
\end{proof}

The above lemma applies with $\beta=\frac2n$ to give the following.
\begin{example}\label{ex:b=1}
Let $m\ge2$ be an integer and let $n=2m+1$.
Suppose $e_1,e_2,e_3$ are projections in a finite von Neumann algebra $\Mcal$
with $\tau(e_i)\ge\frac{m+1}n$, ($i\in\{1,2,3\}$).
Then there is a projection $p\in\Mcal$ with $\tau(p)\le\frac3n$ and with
$\tau(p\wedge e_i)\ge\frac2n$ ($i\in\{1,2,3\}$).
\end{example}

\begin{thm}\label{thm:Pn}
Let $1\le r\le n$ be integers and let $(I,J,K)\in\Tt^n_r$.
If either $r\in\{1,2\}$ or $r=3$ and the triple $(I,J,K)$ is LR--minimal,
then the Horn inequality corresponding to $(I,J,K)$ holds in all finite von Neumann algebras.
\end{thm}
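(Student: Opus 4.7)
The plan is to combine Proposition~\ref{prop:flagHorn} with the reduction machinery and the explicit projection construction of Lemma~\ref{lem:projconstr}. By Proposition~\ref{prop:flagHorn}, it suffices to verify that $(I,J,K)$ has property $\mathrm{P}_n$, or more generally $\mathrm{AP}_n$, in every diffuse finite von Neumann algebra. If $(I,J,K)$ is TT--reducible, Lemma~\ref{lem:TT1} passes the burden to a triple $(\It,\Jt,\Kt)\in\Tt^{n-1}_r$, and Proposition~\ref{prop:redLR} ensures the Littlewood--Richardson coefficient is preserved, so LR--minimality is inherited. By induction on $n$ we may therefore assume $(I,J,K)$ is TT--irreducible.

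Proposition~\ref{prop:LRmin123} now enumerates the remaining cases. For $r=1$ the only irreducible triple is $(\{1\},\{1\},\{1\})\in\Tt^1_1$, and for $r=2$ the only irreducible triple is $(\{1,2\},\{1,2\},\{1,2\})\in\Tt^2_2$; property $\mathrm{P}_r$ is trivial in both, with $p=1$. For $r=3$, the irreducible triples are $(\{m,m{+}\ell,n\},\{m,m{+}\ell,n\},\{m,m{+}\ell,n\})$ with $n=2m+\ell$ and $1\le\ell\le m$; by Proposition~\ref{prop:LRr3min}, the Littlewood--Richardson coefficient equals $\ell$, so LR--minimality forces $\ell=1$, yielding by Corollary~\ref{cor:LRminir3} the unique irreducible LR--minimal triple $(I,J,K)=(\{m,m{+}1,n\},\{m,m{+}1,n\},\{m,m{+}1,n\})$ with $n=2m+1$.

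For this final case, given flags $e,f,g$ in a diffuse finite $(\Mcal,\tau)$, I apply Example~\ref{ex:b=1} (i.e., Lemma~\ref{lem:projconstr} with $\beta=2/n$) to the projections $e_{(m+1)/n}$, $f_{(m+1)/n}$, $g_{(m+1)/n}$, each of trace $(m+1)/n=\tfrac12+\tfrac{1}{2n}$. This directly produces a projection $p\in\Mcal$ with $\tau(p)\le 3/n$, $\tau(p\wedge e_{(m+1)/n})\ge 2/n$, and the analogous bounds for $f$ and $g$, which handles the $\ell=2$ conditions in Definition~\ref{def:Pn}; the $\ell=3$ condition reduces to $\tau(p)\ge 3/n$, handled via property $\mathrm{AP}_n$ by letting $\eps\to 0$.

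The main obstacle will be the $\ell=1$ conditions $\tau(p\wedge e_{m/n})\ge 1/n$ (and the analogues for $f,g$), which are not immediate from the bare statement of Example~\ref{ex:b=1}. Exploiting that $\tau(e_{m/n})+\tau(e_{(m+1)/n})=1$ when $n=2m+1$, I will revisit each of the three cases of the proof of Lemma~\ref{lem:projconstr} and refine the choices of the auxiliary projections $q_0$, $q_i$, $q_i'$, $r_i$, $s_i$ (and, in Case~\ref{case:1}, of $p$ itself inside $q_0$) so that each piece of $p$ is placed inside the appropriate smaller projection $e_{m/n}$, $f_{m/n}$, or $g_{m/n}$ whenever possible. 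Proposition~\ref{prop:ais} on almost--invariant subspaces, together with the mass balance arising from $(m+1)/n+m/n=1$, should supply the flexibility to absorb unavoidable discrepancies into the $\eps$ of property $\mathrm{AP}_n$, which by Proposition~\ref{prop:flagHorn} yields the desired Horn inequality.
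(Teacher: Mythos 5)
Your overall strategy coincides with the paper's: reduce via Lemma~\ref{lem:TT1} and Proposition~\ref{prop:redLR} to an irreducible LR--minimal triple, use Propositions~\ref{prop:LRmin123}, \ref{prop:LRr3min} and Corollary~\ref{cor:LRminir3} to see that the only case left is $(\{m,m{+}1,n\},\{m,m{+}1,n\},\{m,m{+}1,n\})$ with $n=2m+1$, and then apply Lemma~\ref{lem:projconstr} (Example~\ref{ex:b=1}) to $e_{(m+1)/n}$, $f_{(m+1)/n}$, $g_{(m+1)/n}$. However, the last and decisive step of your argument is not actually carried out: you declare the $\ell=1$ conditions $\tau(p\wedge e_{m/n})\ge\frac1n$ to be ``the main obstacle'' and propose to reopen the proof of Lemma~\ref{lem:projconstr}, re-placing the auxiliary projections $q_0,q_i,q_i',r_i,s_i$ inside $e_{m/n}$, $f_{m/n}$, $g_{m/n}$ and absorbing discrepancies via Proposition~\ref{prop:ais} and property AP$_n$. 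As written this is a plan, not a proof, and it is also misdirected: no modification of the lemma is needed. Since $e_{m/n}\le e_{(m+1)/n}$ and $\tau(e_{(m+1)/n})-\tau(e_{m/n})=\frac1n$, the identity~\eqref{eq:tpvq} of Proposition~\ref{prop:taup} gives
\begin{equation}
\tau(e_{\frac mn}\wedge p)=\tau\big(e_{\frac mn}\wedge(e_{\frac{m+1}n}\wedge p)\big)
\ge\tau(e_{\frac{m+1}n}\wedge p)+\tau(e_{\frac mn})-\tau(e_{\frac{m+1}n})
\ge\frac2n-\frac1n=\frac1n,
\end{equation}
and likewise for $f$ and $g$; this one-line estimate is exactly how the paper finishes.

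Two smaller points. First, your treatment of the $\ell=3$ condition is off: even property AP$_n$ demands $\tau(e_1\wedge p)=\tau(p)\ge\frac3n$, so ``letting $\eps\to0$'' does not dispose of it; the correct fix is simply to enlarge the projection produced by the lemma to one of trace exactly $\frac3n$ (possible since $\Mcal$ is diffuse), which can only increase each $\tau(p\wedge e_t)$, yielding full property P$_n$. Second, Example~\ref{ex:b=1} requires $\beta=\frac2n\le\frac25$, i.e.\ $m\ge2$; the case $m=1$ (the triple $(\{1,2,3\},\{1,2,3\},\{1,2,3\})\in\Tt^3_3$) must be noted separately, where $p=1$ works trivially. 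With these repairs your argument becomes the paper's proof.
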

\begin{proof}
By Proposition~\ref{prop:flagHorn}, it will suffice that each such $(I,J,K)$ has property P$_n$.
It follows from Lemma~\label{lem:redLR} that every $(I,J,K)$ can be reduced
(as in Definition~\ref{def:red})
to an irreducible triple, which will be LR--minimal if the original triple $(I,J,K)$ is LR--minimal.
By Lemma~\ref{lem:TT1}, it will, therefore, suffice to show that every irreducible
triple $(I,J,K)\in\Tt^n_r$ with $r\in\{1,2\}$ has property P$_n$, and every
irreducible and LR--minimal triple $(I,J,K)\in\Tt^n_3$ has property P$_n$.

By Proposition~\ref{prop:LRmin123},
for $r=1$ and $r=2$, we only need to verify that $(\{1\},\{1\},\{1\})\in\Tt^1_1$ has property P$_1$
and $(\{1,2\},\{1,2\},\{1,2\})\in\Tt^2_2$ has property P$_2$.
But these facts are immediate.
For $r=3$, by Corollary~\ref{cor:LRminir3}, we need only see that the triple
\begin{equation}\label{eq:mm1n}
(\{m,m+1,n\},\{m,m+1,n\},\{m,m+1,n\})  
\end{equation}
has property P$_n$, where for integers $m\ge1$ and $n=2m+1$.
When $m=1$, this is immediate from the definition.
Take $m\ge2$.
Let $e$, $f$ and $g$ be flags in a finite von Neumann algebra $\Mcal$, with specified trace $\tau$.
Then .
It follows from Lemma~\ref{lem:projconstr} (see Example~\ref{ex:b=1}) that there is a projection $p$
in $\Mcal$ such that $\tau(p)=\frac3n$, and
\begin{equation}\label{eq:pcond}
\tau(e_{\frac{m+1}n}\wedge p)\ge\frac2n\,,\qquad\tau(f_{\frac{m+1}n}\wedge p)\ge\frac2n\,,
\qquad\tau(g_{\frac{m+1}n}\wedge p)\ge\frac2n\,.
\end{equation}
\begin{equation}
\tau(e_{\frac nn}\wedge p)=\tau(f_{\frac nn}\wedge p)=\tau(g_{\frac nn}\wedge p)=\tau(p)=\frac 3n\,.
\end{equation}
Since $\tau(e_{\frac mn})=\tau(e_{\frac{m+1}n})-\frac1n$ from~\eqref{eq:tpvq} of Proposition~\ref{prop:taup} we get
\begin{align}
\tau(e_{\frac mn}\wedge p)=\tau(e_{\frac mn}\wedge(e_{\frac{m+1}n}\wedge p))
&=\tau(e_{\frac{m+1}n}\wedge p)+\tau(e_{\frac mn})-\tau((e_{\frac{m+1}n}\wedge p)\vee e_{\frac mn}) \\
&\ge\tau(e_{\frac{m+1}n}\wedge p)+\tau(e_{\frac mn})-\tau(e_{\frac{m+1}n}) \\
&=\tau(e_{\frac{m+1}n}\wedge p)-\frac1n\ge\frac1n\,,
\end{align}
and similarly
\begin{equation}\label{eq:fgwp}
\tau(f_{\frac mn}\wedge p)\ge\frac1n\qquad\tau(g_{\frac mn}\wedge p)\ge\frac1n\,.
\end{equation}
Now~\eqref{eq:pcond}--\eqref{eq:fgwp} taken together show that $p$ satisfies the requirements of
Definition~\ref{def:Pn} and the triple~\eqref{eq:mm1n} has property P$_n$.
\end{proof}

We would like to end this section with an argument that we discovered in an attempt to show that
the triple
\begin{equation}\label{eq:246}
(\{2,4,6\},\{2,4,6\},\{2,4,6\})\in\Tt^6_3
\end{equation}
has property AP$_6$.
This triple is irreducible by Proposition~\ref{prop:LRmin123} and by Proposition~\ref{prop:LRr3min},
the corresponding triple $(\lambda,\mu,\nu)$ has Littlewood--Richardson coefficient equal to $2$.
The corresponding Horn inequality,
\begin{equation}
\alpha_1+\alpha_3+\alpha_5+\beta_1+\beta_3+\beta_5\ge\gamma_2+\gamma_4+\gamma_6\,,
\end{equation}
is known to hold in all finite von Neumann algebras.
Indeed, in the $6\times 6$ matrices, by the ordering of eigenvalues, we have
\begin{equation}
\alpha_1+\alpha_3+\alpha_5+\beta_1+\beta_3+\beta_5\ge\frac12\sum_{i=1}^6(\alpha_i+\beta_i)
=\frac12\sum_{i=1}^6\gamma_i
\ge\gamma_2+\gamma_4+\gamma_6
\end{equation}
and clearly a similar argment works in finite von Neumann algebras for integrals of eigenvalue functions.
Nonetheless, it is an interesting question whether the triple~\eqref{eq:246} has property P$_6$, or at least AP$_6$.
For the latter property, given arbitrary flags $e$, $f$ and $g$ in a finite von Neumann algebra and given $\eps>0$,
we would need to find a projection $p$
such that
\begin{gather}
\tau(p)\le\frac12+\eps \\
\tau(e_{\frac26}\wedge p)\ge\frac16\,,\qquad\tau(f_{\frac26}\wedge p)\ge\frac16\,,\qquad
\tau(g_{\frac26}\wedge p)\ge\frac16 \label{eq:e26} \\
\tau(e_{\frac46}\wedge p)\ge\frac26\,,\qquad\tau(f_{\frac46}\wedge p)\ge\frac26\,,\qquad
\tau(g_{\frac46}\wedge p)\ge\frac26\,. \label{eq:g46}
\end{gather}
The following lemma
proves this, but under the added hypothesis that the projections from
the flags appearing in~\eqref{eq:e26}--\eqref{eq:g46}
be in general position.
Although we are not able to use this argument to prove that any further Horn inequalities hold
in all finite von Neumann algebras
(beyond those treated in Theorem~\ref{thm:Pn}),
we hope that the construction of projections (and in particular, the use of ``almost invariant subspaces''
in the argument) may be of interest.
\begin{lemma}
Let $e_1,e_2,e_3,f_1,f_2,f_3$ be projections in a finite von Neumann algebra $\Mcal$ with normal faithful tracial state
$\tau$,
satisfying
\begin{equation}
e_i\le f_i,\qquad\tau(e_1)=\frac13,\qquad\tau(f_i)=\frac23,\qquad(1\le i\le 3),
\end{equation}
and let $\eps>0$.
Assume further that whenever $\{i,j,k\}=\{1,2,3\}$, we have
\begin{align}
e_i\wedge f_j&=0 \label{eq:eifj0} \\
e_k\wedge(e_i\vee e_j)&=0. \label{eq:eiwejvek}
\end{align}
Then there is a projection $p\in\Mcal$ such that
\begin{alignat}{2}
\tau(p)&\le\frac12+\eps, \\
\tau(p\wedge e_i)&\ge\frac16,&\qquad(1\le i&\le 3) \\
\tau(p\wedge f_i)&\ge\frac13,&\qquad(1\le i&\le 3).
\end{alignat}
\end{lemma}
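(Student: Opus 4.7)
The plan is to imitate a specific $6\times 6$ matrix construction and to use Proposition~\ref{prop:ais} to compensate for the absence of eigenlines in a diffuse von Neumann algebra. In the matrix model, writing $u_i := f_i\wedge(e_j\vee e_k)$ for $\{i,j,k\}=\{1,2,3\}$, each $u_i$ is $2$-dimensional and, being transverse to both $e_j$ and $e_k$, is the graph of an isomorphism $\phi_i\colon e_j\to e_k$. Seeking lines $\ell_i\subseteq e_i$ and setting $p:=\ell_1+\ell_2+\ell_3$, one finds $\dim(p\wedge e_i)=1$ automatically, and $\dim(p\wedge f_i)=2$ exactly when $\phi_i(\ell_j)=\ell_k$ for each cyclic triple. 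Eliminating $\ell_2$ and $\ell_3$, this reduces to $\ell_1$ being a fixed line for the endomorphism $T:=\phi_2\phi_1\phi_3$ of $e_1$; generically $T$ has two fixed lines, matching $c^{(6)}(I,J,K)=2$ by Proposition~\ref{prop:LRr3min}.

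First I extract the structural consequences of the hypotheses via Proposition~\ref{prop:taup}. The conditions $e_k\wedge(e_i\vee e_j)=0$ and $\tau(e_i)=\frac13$ force $\tau(e_i\vee e_j)=\frac23$ and $e_1\vee e_2\vee e_3=1$; since $e_i\le f_i$, it follows that $\tau(f_i\vee e_j\vee e_k)=1$ and hence $\tau(u_i)=\frac13$ by inclusion--exclusion, while $e_i\wedge f_j=0$ gives $u_i\wedge e_j=u_i\wedge e_k=0$. Using Lemma~\ref{lem:Ei} applied to the pairs $(e_j,u_i)$ and $(u_i,e_k)$, together with the partial inverses of Section~\ref{sec:prelims}, one constructs trace-preserving lattice isomorphisms between the projections below $e_j$, below $u_i$, and below $e_k$, which serve as the von Neumann algebra analogues of the matrix-case restrictions of $\phi_i$. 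Chaining these cyclically yields an idempotent operator $X$ affiliated with $\Mcal$ on $e_1\HEu$ whose $\sharp$-action is the analogue of $T$, with $\tau(\domproj(X))=\frac13$.

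For the main construction, apply Proposition~\ref{prop:ais} to $X$ with $t=\frac16$ and parameter $\eps$, producing $\ell_1\in\Proj(\Mcal)$ with $\ell_1\le e_1$, $\tau(\ell_1)=\frac16$, and $q\in\Proj(\Mcal)$ with $\tau(q)\le\eps$ such that $X^\sharp(\ell_1)\le\ell_1\vee q$. Propagating $\ell_1$ through the lattice isomorphisms from the previous paragraph defines $\ell_2\le e_2$ and $\ell_3\le e_3$ of trace $\frac16$, and produces for each $i$ a projection $r_i\le u_i$ of trace $\frac16$ (the common image of $\ell_j$ and $\ell_k$ in $u_i$, up to the almost invariance). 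Set $p:=\ell_1\vee\ell_2\vee\ell_3\vee q$, so $\tau(p)\le\frac12+\eps$ and $\tau(p\wedge e_i)\ge\tau(\ell_i)=\frac16$. For the bound $\tau(p\wedge f_i)\ge\frac13$, Lemma~\ref{lem:Ei}(iii) applied to the pair $(e_j,e_k)$ and to $r_i$ (using that $E(e_j,e_k)^\sharp(r_i)=\ell_j$ and $E(e_k,e_j)^\sharp(r_i)=\ell_k$ by construction, modulo the almost invariance) yields $r_i\le\ell_j\vee\ell_k\vee q\le p$; since $\ell_i\wedge r_i\le e_i\wedge u_i=0$ we conclude $\tau(p\wedge f_i)\ge\tau(\ell_i\vee r_i)=\tau(\ell_i)+\tau(r_i)=\frac13$.

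The hard part will be assembling $X$ in the second paragraph: the cyclic composition of the partial inverses from Section~\ref{sec:prelims} is only an unbounded affiliated operator, and one must track domain projections at each stage to ensure $\tau(\domproj(X))=\frac13\ge\frac16$ so that Proposition~\ref{prop:ais} applies. Once $X$ is in place the Proposition substitutes for the existence of eigenlines of $T$; the error projections introduced are absorbed into the $\eps$-slack, and the trace and intersection bookkeeping in the third paragraph then goes through essentially as in the matrix case.
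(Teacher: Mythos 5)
Your proposal is correct and is essentially the paper's own argument: the same hexagon of trace-$\frac13$ projections $e_i$ and $f_i\wedge(e_j\vee e_k)$, the same chain of trace-preserving correspondences built from the two-projection idempotents of Section~\ref{subsec:idems} and their partial inverses, Proposition~\ref{prop:ais} applied with $t=\frac16$ to the composite operator going once around the cycle, and Lemma~\ref{lem:Ei}(iii) for the final containments, differing only cosmetically in that the paper puts $X^\sharp(q_1)$ rather than the small error projection $q$ into $p$. (One harmless slip: calling $X$ ``idempotent'' is inaccurate---the cyclic composite is just an affiliated operator with $\domproj(X)=\ranproj(X)=e_1$---but nothing in your argument uses idempotency.)
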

\begin{proof}
Throughout, we let $\{i,j,k\}=\{1,2,3\}$.
Let us first show
\begin{equation}\label{eq:fifj13}
\tau(f_i\wedge f_j)=\frac13\,.
\end{equation}
The inequality $\ge$ is clear from~\eqref{eq:tpvq} in Proposition~\ref{prop:taup}.
On the other hand, $e_i\wedge f_j=e_i\wedge(f_i\wedge f_j)$, so again from~\eqref{eq:tpvq}, we get
\begin{equation}
\tau(e_i\wedge f_j)\ge\tau(e_i)+\tau(f_i\wedge f_j)-\tau(f_i)=\tau(f_i\wedge f_j)-\frac13\,,
\end{equation}
so from~\eqref{eq:eifj0} we get $\le$ in~\eqref{eq:fifj13}.
By~\eqref{eq:eifj0}, we also get
\begin{equation}\label{eq:eivej}
\tau(e_i\vee e_j)=\frac23\,,
\end{equation}
which in turn yields
\begin{equation}\label{eq:fkweivej13}
\tau(f_k\wedge(e_i\vee e_j))=\frac13\,.
\end{equation}
Indeed, $\ge$ is clear from~\eqref{eq:tpvq} and~\eqref{eq:eivej}, while from~\eqref{eq:eiwejvek}
\begin{equation}
e_k\wedge(e_i\vee e_j)=e_k\wedge(f_k\wedge(e_i\vee e_j))
\end{equation}
and~\eqref{eq:tpvq} we get $\le$ in~\eqref{eq:fkweivej13}.
Let us write $E^j_i=E(e_i,e_j)$, etc.,
for the idempotents defined in Section~\ref{subsec:idems}.
We have
\begin{align}
\domproj(E^j_i)&=e_i\vee e_j-e_j \\
\kerproj(E^j_i)&=(1-e_i\vee e_j)+e_j \\
\ranproj(E^j_i)&=e_i\,.
\end{align}
Let $S^j_i=E^j_i\cdot(f_k\wedge(e_i\vee e_j))$ be the composition of operators.
We have
\begin{equation}
\kerproj(E^j_i)\wedge(f_k\wedge(e_i\vee e_j))
=(\kerproj(E^j_i)\wedge(e_i\vee e_j))\wedge f_k
=e_j\wedge f_k=0.
\end{equation}
Thus, we have
\begin{equation}
\domproj(S_i^j)=f_k\wedge(e_i\vee e_j),\qquad\ranproj(S^j_i)=e_i\,,
\end{equation}
and we have the picture in Figure~\ref{fig:spokes}, where the spokes represent projections of trace $\frac13$.
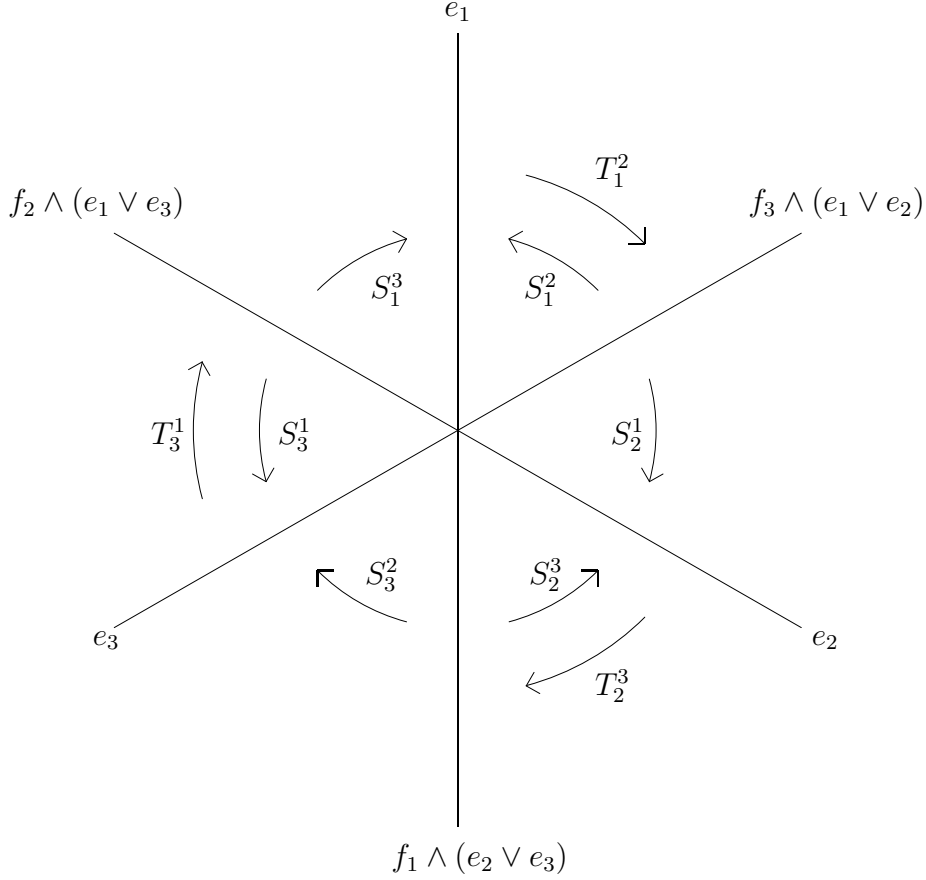
\begin{figure}[hb]
\caption{Some projections and operators.}
\label{fig:spokes}
\begin{picture}(350,345)(-175,-165)
\put(0,0){\line(87,50){130}}
\put(0,0){\line(0,1){150}}
\put(0,0){\line(-87,50){130}}
\put(0,0){\line(-87,-50){130}}
\put(0,0){\line(0,-1){150}}
\put(0,0){\line(87,-50){130}}
\put(-5,157){$e_1$}
\put(110,83){$f_3\wedge(e_1\vee e_2)$}
\put(134,-81){$e_2$}
\put(-25,-165){$f_1\wedge(e_2\vee e_3)$}
\put(-138,-81){$e_3$}
\put(-170,83){$f_2\wedge(e_1\vee e_3)$}
\put(0,0){\arc{150}{-0.262}{0.262}}
\put(0,0){\arc{150}{0.785}{1.309}}
\put(0,0){\arc{150}{1.832}{2.356}}
\put(0,0){\arc{150}{2.880}{3.404}}
\put(0,0){\arc{150}{3.927}{4.451}}
\put(0,0){\arc{150}{4.974}{5.498}}
\put(72.5,-19.4){\line(-500,866){3}}
\put(72.5,-19.4){\line(866,500){5.2}}
\put(58,-5){$S_2^1$}
\put(-53.0,-53.0){\line(0,-1){6}}
\put(-53.0,-53.0){\line(1,0){6}}
\put(-35,-58){$S_3^2$}
\put(53.0,-53.0){\line(0,-1){6}}
\put(53.0,-53.0){\line(-1,0){6}}
\put(27,-58){$S_2^3$}
\put(-72.5,-19.4){\line(500,866){3}}
\put(-72.5,-19.4){\line(-866,500){5.2}}
\put(-68,-5){$S_3^1$}
\put(-19.4,72.5){\line(-500,-866){3}}
\put(-19.4,72.5){\line(-866,500){5.2}}
\put(-33,50){$S_1^3$}
\put(19.4,72.5){\line(500,-866){3}}
\put(19.4,72.5){\line(866,500){5.2}}
\put(25,50){$S_1^2$}
\put(0,0){\arc{200}{0.785}{1.309}}
\put(0,0){\arc{200}{2.880}{3.404}}
\put(0,0){\arc{200}{4.974}{5.498}}
\put(-96.7,25.9){\line(-866,-500){5.2}}
\put(-96.7,25.9){\line(500,-866){3}}
\put(-116,-5){$T_3^1$}
\put(70.7,70.7){\line(-1,0){6}}
\put(70.7,70.7){\line(0,1){6}}
\put(52,95){$T_1^2$}
\put(25.9,-96.7){\line(500,866){3}}
\put(25.9,-96.7){\line(866,-500){5.2}}
\put(52,-100){$T_2^3$}
\end{picture}
\end{figure}

Consider the operator
\begin{equation}
X=S_1^3T_3^1S_3^2T_2^3S_2^1T_1^2\,.
\end{equation}
Then $X$ goes once around the wheel in Figure~\ref{fig:spokes}.
Since we have
\begin{gather}
\domproj(T^2_1)=e_1 \\
\ranproj(T^2_1)=f_3\wedge(e_1\vee e_2)=\domproj(S_2^1) \\
\ranproj(S^1_2)=e_2=\domproj(T_2^3) \\
\ranproj(T^3_2)=f_1\wedge(e_2\vee e_3)=\domproj(S_3^2) \\
\ranproj(S^2_3)=e_3=\domproj(T_3^1) \\
\ranproj(T^1_3)=f_2\wedge(e_1\vee e_3)=\domproj(S_1^3) \\
\ranproj(S^3_1)=e_1\,,
\end{gather}
we see
\begin{equation}
\domproj(X)=e_1=\ranproj(X).
\end{equation}
By Proposition~\ref{prop:ais}, there is a projection $q_1\le e_1$ such that
\begin{equation}
\tau(q_1)=\frac16\,,\qquad\tau(q_1\vee X^\sharp(q_1))\le\frac16+\eps.
\end{equation}
Let
\begin{align}
r_3&=(T_1^2)^\sharp(q_1) \\
q_2&=(S_2^1)^\sharp(r_3) \\
r_1&=(T_2^3)^\sharp(q_2) \\
q_3&=(S_3^2)^\sharp(r_1) \\
r_2&=(T_3^1)^\sharp(q_3).
\end{align}
Thus $(S_1^3)^\sharp(r_2)=X^\sharp(q_1)$ and $\tau(q_i)=\tau(r_i)=\frac16$, ($i=1,2,3$).
Let
\begin{equation}
p=q_1\vee q_2\vee q_3\vee X^\sharp(q_1).
\end{equation}
Then
\begin{equation}
\tau(p)\le\tau(q_1\vee X^\sharp(q_1))+\tau(q_2)+\tau(q_3)\le\frac12+\eps
\end{equation}
and $p\wedge e_i\ge q_i$, so
\begin{equation}
\tau(p\wedge e_i)\ge\frac16\,,\quad(i=1,2,3).
\end{equation}
On the other hand, we have
\begin{align}
(E_1^2)^\sharp(r_3)=(S_1^2)^\sharp(r_3)&=q_1 \\
(E_2^1)^\sharp(r_3)=(S_2^1)^\sharp(r_3)&=q_2
\end{align}
and from Lemma~\ref{lem:Ei}, we get $r_3\le q_1\vee q_2$.
Similarly, we get $r_1\le q_2\vee q_3$ and $r_2\le X^\sharp(q_1)\vee q_3$.
Thus, for every $k\in\{1,2,3\}$, we have $r_k\le p$ and
$f_k\wedge p\ge q_k\vee r_k$.
Since $q_k\wedge r_k\le e_k\wedge(e_i\vee e_j)=0$, where $\{i,j,k\}=\{1,2,3\}$, 
we have
\begin{equation}
\tau(f_k\wedge p)\ge\tau(q_k\vee r_k)=\tau(q_k)+\tau(r_k)=\frac13
\end{equation}
and the lemma is proved.
\end{proof}

\section{Possibilities for construction of a non--embeddable example}
\label{sec:constr}

This section is speculative and can be skipped without compromising understanding of the rest of the paper.

Suppose you knew, (say, you met a time traveler from the future),
that Connes' embedding problem has a negative answer and, even more, that the Horn inequality
associated to a triple $(I,J,K)\in T^n_r$
fails to hold in some finite von Neumann algebra.
How could you find and describe a finite von Neumann algebra where this Horn inequality fails?
In this section, we describe an approach, though it is not one that would be guaranteed to work.
We actually attempted to carry out this approach, without success, at the beginning of our work with Horn
inequalities in finite von Neumann algebras.
We did not benefit from an oracle of any sort, and we chose a Horn inequality (to try to violate in a finite
von Neumann algebra) by simple guessing.
(The particular one that we chose is, in fact, now known to hold in all finite von Neumann algebras, by results of this paper.)

We seek operators $a$ and $b$ whose distributions are, respectively,
\begin{align}
\mu_a&=\sum_{i=1}^n\delta_{\alpha_i} \\
\mu_b&=\sum_{j=1}^n\delta_{\beta_j}
\end{align}
and we postulate that $a+b$ has distribution
\begin{equation}
\mu_{a+b}=\sum_{k=1}^n\delta_{\gamma_k}\,,  
\end{equation}
for some real numbers
\begin{gather}
\alpha_1\ge\alpha_2\ge\cdots\ge\alpha_n \label{eq:alpha1n} \\  
\beta_1\ge\beta_2\ge\cdots\ge\beta_n \\  
\gamma_1\ge\gamma_2\ge\cdots\ge\gamma_n\,, \label{eq:gamma1n}
\end{gather}
where the trace equality
\begin{equation}\label{eq:tr=}
\sum_{i=1}^n\alpha_i+\sum_{j=1}^n\beta_j=\sum_{k=1}^n\gamma_k 
\end{equation}
holds and Horn's inequality~\eqref{eq:HIJK} fails.
After rescaling, we may suppose
\begin{equation}\label{eq:HIJKf}
1+\sum_{i\in I}\alpha_i+\sum_{j\in J}\beta_j=\sum_{k\in K}\gamma_k
\end{equation}
In fact, pick some specific values of $\alpha_1,\ldots,\alpha_n$, $\beta_1,\ldots,\beta_n$
and $\gamma_1,\ldots,\gamma_n$ such that \eqref{eq:alpha1n}--\eqref{eq:gamma1n}, \eqref{eq:tr=}
and~\eqref{eq:HIJKf} all hold.
Finding a finite von Neumann algebra in which such $a$ and $b$ can be found is equivalent
to finding a positive trace $\tau$ on the algebra $\Cpx\langle X,Y\rangle$ of polynomials in noncommuting
variables $X$ and $Y$ such that $\tau(1)=1$, and for all $k\in\Nats$, we have
\begin{align}
\tau(X^k)&=\frac1n\sum_{i=1}^n\alpha_i^k \label{eq:tauXk} \\
\tau(Y^k)&=\frac1n\sum_{i=1}^n\beta_i^k \label{eq:tauYk} \\
\tau((X+Y)^k)&=\frac1n\sum_{i=1}^n\gamma_i^k\,. \label{eq:tauX+Yk}
\end{align}
Indeed, such a trace will give rise, via the Gelfand--Naimark--Segal construction, to a Hilbert space and
a representation of $\Cpx\langle X,Y\rangle$ whose closure in the strong operator topology is
the desired finite von Neumann algebra, with $a$ and $b$ being the images of $X$ and $Y$ under
the representation.

For such a trace $\tau$, the moments $\tau(X^k)$ and $\tau(Y^k)$ are, of course, specified
by~\eqref{eq:tauXk} and~\eqref{eq:tauYk} above.
It remains to choose values for mixed moments.
In light of the trace property, this amounts to choosing values for 
all expressions of the form
\begin{equation}\label{eq:tauXY}
\tau(X^{p_1}Y^{q_1}\cdots X^{p_\ell}Y^{q_\ell})
\end{equation}
for positive integers $\ell$ and $p_1,q_1,\ldots,p_\ell,q_\ell$.
Of course, the trace condition implies that the value of~\eqref{eq:tauXY} is unchanged by cyclically
permuting the $\ell$ pairs $(p_1,q_1),\ldots,(p_\ell,q_\ell)$.
For convenience, let us say that the expresssion~\eqref{eq:tauXY} is in {\em canonical form} if
$p_1=\max_{1\le j\le\ell}p_j$ and $q_1=\max_{\{j\mid p_j=p_1 \}}q_j$ and $p_2=\max_{\{j\mid p_j=p_1,\,q_j=q_1\}}p_{j+1}$,
{\em etc.},
and we choose values of the mixed moments~\eqref{eq:tauXY} that are in canonical form.

Some linear relations between these moments are implied by the predetermined values found in~\eqref{eq:tauX+Yk}.
For example, taking $k=2,3,4$, we get
\begin{gather}
2\tau(XY)=\tau((X+Y)^2)-\tau(X^2)-\tau(Y^2) \\
3(\tau(X^2Y)+\tau(XY^2))=\tau((X+Y)^3)-\tau(X^3)-\tau(Y^3) \\
4(\tau(X^3Y)+\tau(XY^3))+2\tau(XYXY)=\tau((X+Y)^4)-\tau(X^4)-\tau(Y^4).
\end{gather}
Finally, the positivity of $\tau$ is equivalent to the positive semidefinitenesss of every matrix
of the form 
\begin{equation}
\big(\tau(w_i^*w_j))_{1\le i,j\le n}\,,  
\end{equation}
for every finite list $(w_1,\ldots,w_n)$ of distinct words in the free semigroup generated by $X$ and $Y$,
where $w_i^*$ is the word $w_i$ taken in reverse order.

\bibliographystyle{plain}

\begin{thebibliography}{9}

\bibitem{BL01} {\scshape H. Bercovici and W.S. Li,}
Inequalities for eigenvalues of sums in a von Neumann algebra,
\emph{Recent advances in operator theory and related topics (Szeged, 1999)},
Oper. Theory Adv.\ Appl.\ {\bf 127} 
Birkh\"auser, Basel, 2001, pp.\ 113--126.

\bibitem{BL06} {\scshape H. Bercovici and W.S. Li,}
Eigenvalue inequalities in an embeddable factor,
\emph{Proc.\ Amer.\ Math.\ Soc.}\ {\bf 134} (2006), 75--80.


\bibitem{Bu} {\scshape A.S.\ Buch,}
{\em Littlewood--Richardson Calculator,} programs written in C and maple code,
http://www.math.rutgers.edu/\~{\hskip0.1em}asbuch/lrcalc/

\bibitem{CD} {\scshape B. Collins, K. Dykema,}
A linearization of Connes' embedding problem,
preprint (2007).

\bibitem{F97} {\scshape W. Fulton,}
{\em Young Tableaux,} Cambridge University Press (Cambridge, U.K.), 1997.

\bibitem{F00} {\scshape W. Fulton,}
Eigenvalues, invariant factors, highest weights, and
Schubert calculus,
{\em Bull. Amer. Math. Soc. (N.S.)}  {\bf 37} (2000), 209--249.

\bibitem{H62}
{\scshape A. Horn}
Eigenvalues of sums of Hermitian matrices,
{\em Pacific J. Math.} {\bf 12} (1962), 225-241.

\bibitem{TT74} {\scshape R.C. Thompson and S. Therianos,}
On a Construction of B.P.\ Zwahlen,
\emph{ Linear and Multilinear Algebra} {\bf 1} (1974), 309--325.

\bibitem{Z66} {\scshape B.P.\ Zwahlen,}
\"Uber die Eigenwerte der Summe zweier selbstadjungierter Operatoren,
\emph{Comment. math. Helv.} {\bf 40} (1966), 81-116.

\end{thebibliography}

\end{document}